\newtheorem{maintheorem}{Theorem}
\newtheorem{theorem}{Theorem}[section]
\newtheorem{lemma}[theorem]{Lemma}
\newtheorem{proposition}[theorem]{Proposition}
\newtheorem{corollary}[theorem]{Corollary}
\theoremstyle{remark}
\newtheorem{remark}[theorem]{Remark}
\newtheorem{example}[theorem]{Example}
\theoremstyle{definition}
\newtheorem{definition}[theorem]{Definition}
\let\c@equation=\c@theorem\makeatother
\newcommand{\Cat}{\mathit{Cat}}
\newcommand{\co}{\colon\thinspace}
\newcommand{\cy}{\text{cy}}
\newcommand{\colim}{\operatornamewithlimits{colim}}
\newcommand{\ev}{\text{ev}}
\newcommand{\I}{\mathcal I}
\newcommand{\id}{\text{id}}
\newcommand{\hocolim}{\operatornamewithlimits{hocolim}}
\newcommand{\holim}{\operatornamewithlimits{holim}}
\newcommand{\Hom}{\operatorname{Hom}}
\newcommand{\K}{\operatorname{K}}
\renewcommand{\mod}{\mathit{mod}}
\newcommand{\Map}{\operatorname{Map}}
\newcommand{\op}{\mathrm{op}}
\newcommand{\sd}{\mathrm{sd}}
\newcommand{\Sp}{\mathit{Sp}}
\newcommand{\TC}{\operatorname{TC}}
\renewcommand{\TH}{\operatorname{TH}}
\newcommand{\TTC}{\mathbf{TC}}
\newcommand{\TR}{\operatorname{TR}}
\newcommand{\trc}{\text{trc}}
\newcommand{\xr}{\xrightarrow}
\newcommand{\xl}{\xleftarrow}
\begin{document}

\title{The cyclotomic trace for symmetric ring spectra}
\author{Christian Schlichtkrull}
\address{Department of Mathematics, University of Bergen, Johannes Brunsgate 12,
5008 Bergen, Norway} \email{krull@math.uib.no}
\date{\today}

\begin{abstract}
The purpose of this paper is to present a simple and explicit construction of the 
B\"okstedt-Hsiang-Madsen cyclotomic trace relating algebraic K-theory and 
topological cyclic homology. Our construction also incorporates Goodwillie's idea of a global cyclotomic trace. 
\end{abstract}

\maketitle

\section{Introduction}\label{introduction}
As defined by B\"okstedt-Hsiang-Madsen  \cite{BHM}, the cyclotomic trace 
\[
\trc\co \K(A)\to \TC(A)
\]
is a natural map relating the algebraic K-theory spectrum $\K(A)$ and the topological cyclic homology spectrum $\TC(A)$ for any connective symmetric ring spectrum $A$.
 The purpose of this paper is to present a simplified construction of this map which at the same time incorporates Goodwillie's idea of a global cyclotomic trace.  We begin by recalling the basic ingredients. 
\subsection{Topological cyclic homology}\label{TCsection}
 The definition of $\TC(A)$ is based on the model of the topological Hochschild homology spectrum $\TH(A)$ introduced by B\"okstedt ~\cite{B}. Being the realization of a cyclic spectrum this has a canonical action of the circle group $\mathbb T$ and by restriction an action of each of the finite cyclic groups $C_r$. The fixed point spectra are related by two types of structure maps
 \[
 F_r, R_r\co \TH(A)^{C_{nr}}\to \TH(A)^{C_n},
 \]
 called respectively the Frobenius and the restrictions maps. Here the Frobenius maps are the natural inclusions whereas the definition of the restriction maps depends on the 
 cyclotomic structure of $\TH(A)$. The terminology was introduced by Hesselholt-Madsen \cite{HM} and is motivated by the relationship to the theory of Witt vectors: if $A$ is commutative, then $\pi_0\TH(A)^{C_n}$ is isomorphic to the ring of truncated Witt vectors 
 $\mathbb W_{\langle n\rangle}(\pi_0(A))$ and the maps $F_r$ and $R_r$ respectively induce the Frobenius and restriction homomorphisms of Witt vectors under this isomorphism.  
Let $\mathbb I$ be the category with objects the natural numbers $n\geq 1$ and two types of morphisms, $F_r,R_r\co nr\to n$, subject to the relations
\begin{equation}\label{Icategory}
F_1=R_1=\text{id},\quad F_rF_s=F_{rs},\quad R_rR_s=R_{rs}, \quad F_rR_s=R_sF_r.
\end{equation}
Thus, any morphism in $\mathbb I$ can be written uniquely in the form $F_rR_s$. Given a prime number $p$, let $\mathbb I_p$ be the the full subcategory whose objects are the $p$-powers $p^n$. The correspondence $n\mapsto \TH(A)^{C_n}$ defines an $\mathbb I$-diagram and following \cite{BHM} we define
$$
\TC(A)=\holim_{\mathbb I}\TH(A)^{C_n}\quad\text{and}\quad 
\TC(A,p)=\holim_{\mathbb I_p}\TH(A)^{C_{p^n}}.
$$ 
Identifying $\mathbb T$ with $\mathbb T/C_n$ in the canonical way, $z\mapsto \sqrt[n]{z}$, each of the fixed point spectra $\TH(A)^{C_n}$ inherits a $\mathbb T$-action and it is natural to build this into the construction. The $\mathbb I$-diagram defining $\TC(A)$ is not a diagram of spectra with $\mathbb T$-action, but Goodwillie observes in \cite{G} that $\mathbb I$ and $\mathbb T$ can be combined into a certain twisted product category $\mathbb I\ltimes \mathbb T$ such that the correspondence $n\mapsto \TH(A)^{C_n}$ extends to an $\mathbb I\ltimes\mathbb T$-diagram which for each $n$ codifies the $\mathbb T$-action on $\TH(A)^{C_n}$. Let us write $\TTC(A)$ and $\TTC(A,p)$ for the homotopy limits over $\mathbb I\ltimes \mathbb T$ and $\mathbb I_p\ltimes\mathbb T$. There is a diagram of inclusions of categories 
\begin{equation}\label{subcategories}
\begin{CD}
\mathbb I\ltimes\mathbb T@<<< \mathbb I_p\ltimes \mathbb T@<<< \mathbb T\\
@AAA @AAA @AAA \\
\mathbb I @<<< \mathbb I_p@<<< \{1\}
\end{CD}
\end{equation}
and a corresponding diagram of homotopy limits
\[
\begin{CD}
\TTC(A)@>>>\TTC(A,p)@>>>\TH(A)^{h\mathbb T}\\
@VVV @VVV @VVV\\
\TC(A)@>>> \TC(A,p) @>>>\TH(A).
\end{CD}
\]
Here $\TH(A)^{h\mathbb T}$ denotes the homotopy fixed points of the $\mathbb T$-action on 
$\TH(A)$. Goodwillie proves in \cite{G} that the map $\TTC(A)\to \TC(A)$ becomes an equivalence after profinite completion and that the map $\TC(A)\to\TC(A,p)$ becomes an equivalence after $p$-completion. Moreover, it follows from \cite{G} that the functor $\TTC(A)$ is determined by $\TC(A)$ and $\TH(A)^{h\mathbb T}$ in the sense that there is a homotopy cartesian diagram
\[
\begin{CD}
\TTC(A)@>>> \TH(A)^{h\mathbb T}\\
@VVV @VVV\\
\TTC(A)^{\wedge}@>>> (\TH(A)^{h\mathbb T})^{\wedge}
\end{CD}
\]
where $(-)^{\wedge}$ denotes profinite completion. The cyclotomic trace lifts to a map
\[
\trc\co \K(A)\to \TTC(A)
\]
which Goodwillie calls the \emph{global cyclotomic trace}. 
Thus, composing with the map to $\TC(A)$ we recover the cyclotomic trace of 
B\"okstedt-Hsiang-Madsen while composing with the map to $\TH(A)^{h\mathbb T}$ we get the topological analogue of the Chern character with values in negative cyclic homology. The main interest in the global cyclotomic trace comes from the fact that it leads to the following integral version of the Dundas-McCarthy Theorem: if 
$A\to B$ is a map of connective symmetric ring spectra such that the induced map 
$\pi_0(A)\to \pi_0(B)$ is a surjection with nilpotent kernel, then the diagram
\[
\begin{CD}
\K(A)@>\trc>> \TTC(A)\\
@VVV @VVV\\
\K(B)@>\trc >> \TTC(B)
\end{CD}
\]
is homotopy cartesian. This is proved in \cite{DGM} and is a sharpening of earlier theorems by McCarthy \cite{McC} (for simplicial rings) and Dundas \cite{D} which state that the analogous diagram for $\TC(A)$ becomes homotopy cartesian after profinite completion.  Using the Dundas-Goodwillie-McCarthy Theorems, calculations in algebraic K-theory can often be reduced to calculations of the more accessible functor $\TC(A)$. 
We refer the reader to the excellent survey papers \cite{M} and \cite{H} for an introduction to the calculational results that can be obtained by these methods. Here we shall mainly be concerned with the technical details involved in the definition of the cyclotomic trace itself. We now give a detailed outline of the construction, followed by a discussion of how our definitions relate to those in the literature.

\subsection{The cyclotomic trace}
For simplicity we shall only consider the algebraic K-theory of free modules as opposed to the topological version of ``finitely generated projective" modules; as in the case of ordinary rings it follows from a cofinality argument that the resulting algebraic K-theories only differ in degree zero. Thus, given a connective symmetric ring spectrum $A$, let $\mathcal F_A$ be the category of finitely generated free left $A$-modules of the standard form $A^{\vee n}$. This is a  \emph{spectral category} (a category enriched in symmetric spectra) in the sense that there is a symmetric spectrum of ``morphisms'' relating any two objects. We shall define an associated topological category $\omega\mathcal F_A$ of ``weak equivalences'' that has the same objects and whose morphism spaces may be identified with the spaces of stable equivalences  between the objects in $\mathcal F_A$. The symmetric monoidal structure of $\mathcal F_A$ makes the classifying space $B(\omega\mathcal F_A)$ the underlying space of a $\Gamma$-space in the sense of Segal \cite{S} and the algebraic $\K$-theory spectrum $\K(A)$ is the associated spectrum. Applying Waldhausen's cyclic classifying space construction we similarly get a 
$\Gamma$-space $B^{\cy}(\omega\mathcal F_A)$ whose associated spectrum is the cyclic algebraic K-theory spectrum $\K^{\cy}(A)$. 

One can also evaluate the cyclic bar construction on the spectral category $\mathcal F_A$ itself and we write $\TH(\mathcal F_A)$ for the Dundas-McCarthy model of the topological Hochschild homology spectrum. B\"okstedt's model $\TH(A)$ is obtained by restricting to the subcategory of $\mathcal F_A$ containing only the object $A$ and it is proved in \cite{DM} that the inclusion induces an equivalence  $\TH(A)\to\TH(\mathcal F_A)$ of spectra with cyclotomic structure. It follows that there are induced equivalences of the fixed point spectra and the homotopy limits defining the various form of topological cyclic homology. The advantage of 
$\TH(\mathcal F_A)$ is that the symmetric monoidal structure of $\mathcal F_A$ gives rise to an extra spectrum coordinate (making $\TH(\mathcal F_A)$ a symmetric bispectrum) which is compatible with the spectrum structure of $\K(A)$. Thus, there is a canonical map
\[
\K^{\cy}(A)\to \TH(\mathcal F_A)
\]
which is essentially obtained by including the spaces of stable equivalences in the full morphism spaces of maps between the objects in $\mathcal F_A$. This is in fact a map of spectra with cyclotomic structure and exploiting this we get a map
\[
\K^{\cy}(A)\to \TR(\mathcal F_A)=\holim_{R_r}\TH(\mathcal F_A)^{C_r},
\]  
where by definition $\TR(\mathcal F_A)$ is the homotopy limit over the restriction maps. For commutative $A$ it follows from \cite{HM} that $\pi_0\TR(\mathcal F_A)$ is isomorphic to the ring of big Witt vectors $1+t\bar A[[t]]$ on the ring $\bar A=\pi_0(A)$ and it is natural to view 
$\TR(\mathcal F_A)$ as a topological refinement of the Witt vector construction. From this point of view, the following example makes it natural to view  the above map as a kind of ``characteristic polynomial'' (although for this interpretation one may argue that our definition of 
$\K^{\cy}(A)$ is not optimal). 

\begin{example}\label{Autexample}
It is illuminating to consider the case where $A$ is the Eilenberg-Mac Lane spectrum of an ordinary commutative ring $\bar A$. The cyclic algebraic K-theory $\K^{\cy}(A)$ may then be identified with the algebraic K-theory of the automorphism category $\operatorname{Aut}(\bar A)$, thought of as a category with coproducs. An object of this category is a pair $(P,\alpha)$ given by a finitely generated free $A$-module $P$ and an automorphism $\alpha$ of $P$. 
In this case the above map induces the characteristic polynomial on $\pi_0$ in the sense that an object $(P,\alpha)$ is mapped to
\[
\det(1-t\alpha)\in \pi_0\TR(\mathcal F_A)=\{1+t\bar A[[t]]\}.
\]
Redefining $\K^{\cy}(A)$ by applying Waldhausen's $S_{\bullet}$-construction instead of Segal's $\Gamma$-space approach gives a spectrum that may be identified with the algebraic K-theory of $\operatorname{Aut}(\bar A)$, thought of as an exact category in the usual way.
\end{example}   

 Let $N$ be the multiplicative monoid of natural numbers and write $N\ltimes\mathbb T$ for the semidirect product with $N$ acting from the right on $\mathbb T$ through the power maps. It follows formally from the definition of the category $\mathbb I\ltimes  \mathbb T$ that the topological monoid $N\ltimes\mathbb T$ acts on $\TR(\mathcal F_A)$ and that 
 $\TTC(\mathcal F_A)$ can be identified with the homotopy fixed point spectrum $\TR(\mathcal F_A)^{h(N\ltimes\mathbb T)}$. The above ``characteristic polynomial'' is $N\ltimes\mathbb T$-equivariant and induces a map of homotopy fixed points
\[
\K^{\cy}(A)^{h(N\ltimes\mathbb T)}\to \TR(\mathcal F_A)^{h(N\ltimes \mathbb T)}=
\TTC(\mathcal F_A).
\] 
Thus, in order to define the cyclotomic trace it remains to map the spectrum $\K(A)$ into the homotopy fixed points of $\K^{\cy}(A)$.  With this in mind we prove the following general result.
\begin{maintheorem}\label{Bcytheorem}
Let $\mathcal C$ be a small topological category. Then $N\ltimes \mathbb T$ acts on 
$B^{\cy}(\mathcal C)$ and there is a natural map 
\[
B^{\cy}(\mathcal C)^{h(N\ltimes \mathbb T)}\to \Map(BN,B(\mathcal C)) 
\]
which is a weak homotopy equivalence if $\mathcal C$ is groupoid-like (that is, if the component category $\pi_0\mathcal C$ is a groupoid). 
\end{maintheorem}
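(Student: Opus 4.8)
The plan is to analyze the cyclic bar construction $B^{\cy}(\mathcal C)$ via its simplicial structure and identify the $N \ltimes \mathbb T$-homotopy fixed points with a mapping space out of $BN$. Recall that $B^{\cy}(\mathcal C)$ is the realization of the cyclic space $[k] \mapsto \coprod \Mor(c_0,c_k) \times \Mor(c_1,c_0) \times \cdots \times \Mor(c_k,c_{k-1})$, so it carries a $\mathbb T$-action from the cyclic structure; the monoid $N$ acts through the edgewise subdivision / $r$-fold cyclic power operations (the $r$-th power map on the cyclic category), and one checks these are compatible in the sense that they assemble to an action of the topological monoid $N \ltimes \mathbb T$. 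This is the first step: make the $N \ltimes \mathbb T$-action explicit and verify the semidirect-product relations, which is essentially a formal consequence of the cyclic structure just as the analogous statement for $\TR(\mathcal F_A)$ was quoted above. A homotopy fixed point spectrum (space) for $N \ltimes \mathbb T$ is then computed as $\Map_{N \ltimes \mathbb T}(E(N \ltimes \mathbb T), B^{\cy}(\mathcal C))$, and since $E(N\ltimes\mathbb T) \simeq EN \times E\mathbb T$ with compatible actions, one can compute it in two stages: first take $\mathbb T$-homotopy fixed points, then $N$-homotopy fixed points of the residual $N$-action.

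The key geometric input is the standard identification of the $\mathbb T$-homotopy fixed points (indeed the genuine fixed points, up to equivalence) of the cyclic bar construction: there is a natural $\mathbb T$-equivariant map $B^{\cy}(\mathcal C) \to \Map(S^1, B\mathcal C) = L B\mathcal C$, the free loop space, which for groupoid-like $\mathcal C$ is a weak equivalence — this is the topological-category analogue of the classical fact that $B^{\cy}(G) \simeq \Map(S^1,BG)$ for a group(oid) $G$, and the hypothesis that $\pi_0\mathcal C$ is a groupoid is exactly what makes the simplicial identifications (degeneracy-style collapses using invertibility of components) go through. Under this equivalence, the $\mathbb T$-homotopy fixed points $(LB\mathcal C)^{h\mathbb T}$ are identified, via the fibration $LB\mathcal C \to B\mathcal C$ and the standard cofiber sequence $\mathbb T_+ \to S^0 \to \ldots$, with $\Map_{\mathbb T}(E\mathbb T, \Map(\mathbb T, B\mathcal C)) \simeq \Map(E\mathbb T \times_{\mathbb T} \mathbb T, B\mathcal C) = \Map(E\mathbb T, B\mathcal C) \simeq B\mathcal C$, using $E\mathbb T/\mathbb T \simeq B\mathbb T$ but more precisely $E\mathbb T \times_\mathbb T \mathbb T \simeq E\mathbb T \simeq *$. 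So after taking $\mathbb T$-homotopy fixed points we are left with $B\mathcal C$ carrying a residual action of $N$, and the claim reduces to identifying this $N$-action's homotopy fixed points with $\Map(BN, B\mathcal C)$.

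The remaining step is to understand the residual $N$-action on $B\mathcal C$ (after $\mathbb T$-fixed points) and show $(B\mathcal C)^{hN} \simeq \Map(BN, B\mathcal C)$. The point is that the $r$-th power map on the cyclic nerve, after passing to $\mathbb T$-fixed points, becomes homotopic to the identity on $B\mathcal C$ — but \emph{not canonically}; rather, the power operations furnish, for the monoid $N$, a coherent system of self-homotopies, i.e. $B\mathcal C$ becomes a space with a homotopy-coherent trivial $N$-action, equivalently an $N$-space weakly equivalent to the ``constant at $B\mathcal C$'' diagram in a way that records a map $BN \to \Map(B\mathcal C, B\mathcal C)$ landing in the component of the identity. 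Homotopy fixed points of a trivial action of a monoid (or group) $M$ on $X$ are $\Map_M(EM, X) = \Map(EM/M, X)=\Map(BM, X)$ when the action is strictly trivial; for the homotopy-trivial action here one must check the action is trivializable coherently, which again follows from the groupoid-like hypothesis and the explicit simplicial formulas for the power maps. Assembling the two stages, $B^{\cy}(\mathcal C)^{h(N\ltimes\mathbb T)} \simeq \big((B\mathcal C)^{\text{with residual }N}\big)^{hN} \simeq \Map(BN, B\mathcal C)$.

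The main obstacle I expect is the second and third steps done \emph{equivariantly and coherently} rather than just on the level of weak equivalences: one must produce the equivalence $B^{\cy}(\mathcal C) \xrightarrow{\sim} LB\mathcal C$ as a genuine $\mathbb T$-map (or at least an $\mathbb T$-map in a sufficiently structured model, e.g. after applying a functorial $\mathbb T$-CW replacement) and simultaneously track the $N$-action through it, so that the $N \ltimes \mathbb T$-structure is respected at every stage. The groupoid-like hypothesis is used precisely to collapse the relevant simplicial diagrams; away from it the comparison map $B^{\cy}(\mathcal C) \to LB\mathcal C$ need not be an equivalence. I would handle the coherence by working simplicially throughout — realizing the cyclic structure as a $\mathbb T$-action in the standard way, using edgewise subdivision $\sd_r$ to see the $C_r$- and ultimately $N$-structure levelwise — and only passing to topological mapping spaces at the end, so that the homotopy-fixed-point computations reduce to the two formal lemmas $(LX)^{h\mathbb T}\simeq X$ and $X^{hN}\simeq\Map(BN,X)$ for homotopy-trivial $N$, both applied to $X = B\mathcal C$.
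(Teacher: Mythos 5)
Your proposal follows the paper's proof essentially verbatim: decompose the homotopy fixed points as $\bigl(B^{\cy}(\mathcal C)^{h\mathbb T}\bigr)^{hN}$, identify the $\mathbb T$-stage with $B(\mathcal C)$ via the free loop space of $B(\mathcal C)$ (this is the only place the groupoid-like hypothesis is used), and then trivialize the residual $N$-action up to coherent homotopy so that the $N$-stage becomes $\Map(BN,B(\mathcal C))$. One small correction: the coherent trivialization of the Frobenius operators does not require the groupoid-like hypothesis --- the paper carries it out for an arbitrary small topological category by an explicit cosimplicial cube homotopy comparing the diagonal map $D_r$ with the ``last-component'' inclusion $\bar D_r$.
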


Applying this to $w\mathcal F_A$ we define a 
$\Gamma$-space $B'(w\mathcal F_A)$ by forming the homotopy pullback of the diagram
\[
B^{\cy}(w\mathcal F_A)^{h(N\ltimes\mathbb T)}\xr{\sim}
\Map(BN,B(w\mathcal F_A))\xl{}
B(w\mathcal F_A)
\] 
where the right hand map is defined by including $B(w\mathcal F_A)$ as the constant functions.
The associated spectrum $\K'(A)$ is canonically equivalent to $\K(A)$ and maps naturally to the homotopy fixed points of $\K^{\cy}(A)$. Summarizing, our definition of the cyclotomic trace is represented by the chain of maps
\[
\trc\co\K(A)\xl{\sim}\K'(A)\to \K^{\cy}(A)^{h(N\ltimes\mathbb T)}\to 
\TR(\mathcal F_A)^{h(N\ltimes \mathbb T)} =\TTC(\mathcal F_A)\simeq
\TTC(A).
\]
In situations where it is important to have a direct map relating algebraic K-theory and topological cyclic homology we may of course choose to work with the models $\K'(A)$ and 
$\TTC(\mathcal F_A)$. 
It is worth noting that 
$\Map(BN,B(w\mathcal F_A))$ is the inverse limit of a diagram of fibrations  
\[
P\mapsto \Map(\textstyle\prod_{p\in P}B\langle p\rangle, B(w\mathcal F_A))
\]
where $P$ runs through the finite sets of prime numbers and $\langle p\rangle$ denotes the 
multiplicative monoid generated by $p$ (thus, the domain is homotopy equivalent to a $|P|$-dimensional torus). The projection onto 
$\Map(B\langle p\rangle, B(w\mathcal F_A))$ corresponds via the cyclotomic trace to the projection of $\TTC(A)$ onto $\TTC(A,p)$.
   
There are two main innovations in the approach to the cyclotomic trace taken here. 
The first is that the $\Gamma$-space structures we use to define the spectra $\K(A)$ and 
$\TH(\mathcal F_A)$ are considerably simpler than those considered in \cite{BHM}. The second is that we avoid ``inverting the weak equivalences'' in $w\mathcal F_A$ before mapping into $\TH(\mathcal F_A)$: the method for comparing the bar construction to the cyclic bar construction used in \cite{BHM} and \cite{G} involves replacing a grouplike monoid by an equivalent group and this procedure was refined by Dundas \cite{Dloc}, \cite{D2}, to a localization functor on the categorical level.  Whereas this procedure works fine for many purposes it does not behave well with respect to multiplicative structures. Thus, even though $\K(A)$ and $\TTC(A)$ are $E_{\infty}$ ring spectra if $A$ is commutative, it is not obvious how to make the cyclotomic trace an $E_{\infty}$ map from this point of view. In our formulation we avoid inverting the weak equivalences by directly analyzing the homotopy fixed points of the cyclic bar construction and all the steps in the construction presented here are compatible with products. Based on this we show in 
\cite{SchEinfty} how to refine the cyclotomic trace to an $E_{\infty}$ map.  

\subsection{Variations and generalizations}
In writing this paper, the main priority has been to keep the constructions as simple and explicit as possible. We here list a number of possible variations and generalizations. First of all, we have chosen to work with symmetric spectra of topological spaces, but one could have worked with symmetric spectra of simplicial sets throughout. This would in fact have simplified some of the arguments since then we would not have to worry about the symmetric spectra being 
``well-based''. Secondly, while our construction of the algebraic K-theory spectrum is based on Segal's $\Gamma$-space approach, we could have  chosen to use Waldhausen's $S_{\bullet}$ construction instead, see \cite{W1}. This would give an equivalent model of the algebraic K-theory spectrum, but for the cyclic algebraic K-theory spectrum one would get a new and, arguably, better behaved theory, cf.\  Example~ \ref{Autexample}. On the other hand, we find the simplicity of the $\Gamma$-space construction very appealing and this model is convenient when making the cyclotomic trace an $E_{\infty}$ map \cite{SchEinfty}. We also remark that the constructions in this paper can be used to define the cyclotomic trace for more general symmetric monoidal spectral categories along the lines of \cite{D1}. It remains an interesting question how to define a good version of the cyclotomic trace for symmetric ring spectra that are not connective.   

We have aimed at making the paper reasonable self contained and we have tried to give suitable references along the way and to explain how our definitions compare to earlier ones.
We have been particularly influenced by the papers by B\"okstedt-Hsiang-Madsen \cite{BHM}, Goodwillie \cite{G}, Hesselholt-Madsen \cite{HM}, 
Dundas-McCarthy \cite{DM}, and Dundas \cite{D1,D2}.

\subsection*{Organization of the paper}
We begin in Section \ref{holimsection} by fixing notation for symmetric spectra and homotopy
limits. Here we also include a detailed discussion of the homotopy limit of a diagram indexed by a Grothendieck construction. This material is used in later sections when analyzing homotopy limits of diagrams indexed by the categories $\mathbb I$ and $\mathbb I\ltimes \mathbb T$. 
The study of algebraic K-theory begins in Section~\ref{algebraicKsection} where we introduce the category $w\mathcal F_A$ of stable equivalences and the associated algebraic K-theory spectrum $\K(A)$. In Section 
\ref{cyclicalgebraicKsection} we consider the cyclic analogue $\K^{\cy}(A)$ and based on Theorem \ref{Bcytheorem} we show how to relate $\K(A)$ to the homotopy fixed points of the latter. The definition of the topological cyclic homology spectrum is then recalled in Section \ref{cyclotomictracesection} where we define the cyclotomic trace. Finally, we analyze the homotopy fixed points of the cyclic bar construction and prove Theorem \ref{Bcytheorem} in Section 
\ref{hofixsection}. 

\section{Symmetric spectra and homotopy limits}\label{holimsection}
In this section we first fix notation for symmetric spectra and homotopy limits. We then give a detailed account of the dual Grothendieck construction and the dual version of Thomason's homotopy colimit theorem which describes the homotopy limit of a diagram indexed by a Grothendieck category. The reason for including this material is that the general theory specializes to a canonical approach for analyzing homotopy limits of diagrams indexed by the categories $\mathbb I$ and $\mathbb I\ltimes \mathbb T$ entering in the definition of the cyclotomic trace. 

\subsection{Symmetric spectra}\label{symmetricsection}
We work in the categories $\mathcal U$ and $\mathcal T$ of unbased and based compactly generated weak Hausdorff spaces. By a spectrum we understand a sequence of based spaces 
$E(n)$ for $n\geq 0$, together with a sequence of based structure maps 
$\sigma\co S^1\wedge E(n)\to E(1+n)$.  A symmetric spectrum is a spectrum in which the $n$th space 
$E(n)$ comes equipped with a base point preserving action of the symmetric group $\Sigma_n$ such that the iterated structure maps 
\[
\sigma^m\co S^m\wedge E(n)\to E(m+n) 
\] 
are $\Sigma_m\times \Sigma_n$-equivariant. We write $\Sp^{\Sigma}$ for the topological category of symmetric spectra in which a morphism $E\to E'$ is a sequence of $\Sigma_n$-equivariant based maps $E(n)\to E'(n)$ that commute with the structure maps. The morphisms space $\Map_{\Sp^{\Sigma}}(E,E')$ is topologized as a subspace of the product of the based mapping spaces $\Map(E(n),E'(n))$. It is proved in \cite{HSS} (in the simplicial setting) and \cite{MMSS} that $\Sp^{\Sigma}$ has a stable model category structure that makes it Quillen equivalent to the usual category of spectra. In this 
model structure a symmetric spectrum $E$ is fibrant if and only if it is an $\Omega$-spectrum in the sense that the adjoint structure maps $E(n)\to \Omega E(n+1)$ are weak homotopy equivalences for $n\geq 0$. A symmetric spectrum is said to be a positive $\Omega$-spectrum if the adjoint structure maps are weak homotopy equivalences for $n\geq 1$.
We say that a map of symmetric spectra $E\to E'$ is a $\pi_*$-isomorphism if it induces an isomorphism on spectrum homotopy groups. A symmetric spectrum $E$ is \emph{semistable} if there exists an $\Omega$-spectrum $E'$ and a $\pi_*$-isomorphism $E\to E'$. Choosing as fibrant replacement in the stable model structure one can always find an $\Omega$-spectrum $E'$ and a map $E\to E'$ which is a stable equivalence (a weak equivalence in the stable model structure) but the point is that a stable equivalence need not be a $\pi_*$-isomorphism. In fact, there is an obvious candidate for such a fibrant replacement as we now recall. For each $m\geq 0$, we define the shifted 
spectrum $E[m]$ to be the symmetric spectrum with $n$th space $E(m+n)$ and structure maps
\[
S^1\wedge E(m+n)\xr{\sigma} E(1+m+n)\xr{\tau_{1,m}\sqcup 1_n}E(m+1+n)
\]  
where $\tau_{1,m}\sqcup 1_n$ is the permutation that acts by the $(1,m)$-shuffle $\tau_{1,m}$ 
on the first 
$1+m$ elements and is the identity on the last $n$ elements. The group $\Sigma_n$ acts on $E[m](n)$ via the inclusion $\Sigma_n\to \Sigma_{m+n}$ that maps an element $\alpha$ in $\Sigma_n$ to the permutation that acts as the identity on the first $m$ elements and as $\alpha$ on the last 
$n$ elements. Following \cite{HSS} we write $RE=\Omega E[1]$ and consider the map of symmetric spectra $\tilde\sigma\co E\to RE$ which in spectrum degree $n$ is the adjoint structure map 
$E(n)\to \Omega E(1+n)$. Let $R^{\infty}E$ be the homotopy colimit (or telescope) of the sequence of symmetric spectra $R^mE$ under the maps 
$R^m(\tilde\sigma)\co R^mE\to R^{m+1}E$. Thus, identifying $R^mE$ with the symmetric spectrum $\Omega^mE[m]$, the map $R^m(\tilde\sigma)$ is given in spectrum degree $n$ by
\begin{align*}
\Map(S^m,E(m+n))&\to \Map(S^1\wedge S^m,S^1\wedge E(m+n))\\
&\to \Map(S^{1+m},E(1+m+n)),
\end{align*}
where the first arrow takes a based map $f$ to 
$\id_{S^1}\wedge f$, and the second arrow is induced by the structure map of $E$. 
The inclusion of $E$ as the first term of the system defines a map $E\to R^{\infty}E$ and it follows from \cite{HSS}, Proposition~5.6.2, that $E$ is semistable if and only if $R^{\infty}E$ is an  $\Omega$-spectrum and this map is a $\pi_*$-isomorphism. Since a stable equivalence between $\Omega$-spectra is a level-equivalence it follows that a map between semistable symmetric spectra is a stable equivalence if and only if it is a $\pi_*$-isomorphism. 
The class of semistable symmetric spectra includes the (positive) symmetric 
$\Omega$-spectra and more generally any symmetric spectrum whose homotopy groups stabilize in the sense that the homomorphisms 
in the systems defining the spectrum homotopy groups eventually become isomorphisms. For instance, this includes the suspension spectra.  

\subsection{Homotopy limits}\label{holimDefsection}
We shall follow Bousfield-Kan \cite{BK} in the definition of homotopy limits except that we work topologically instead of simplicially. Thus, let $\mathcal K$ be a small topological category (a small category enriched in $\mathcal U$) by which we mean that the morphism sets 
$\mathcal K(K,K')$ are topologized and that composition is continuous. We shall tacitly assume throughout that a topological category is well-based in the sense that the identity morphisms provide each of the morphism spaces $\mathcal K(K,K)$ with a non-degenerate base point. Given an object $K$ in $\mathcal K$ we follow \cite{MacL} and write 
$(\mathcal K\downarrow K)$ for the category of objects in $\mathcal K$ over $K$. Thus, the set of objects is topologized as the disjoint union of the morphism spaces $\mathcal K(K',K)$ where $K'$ ranges over the objects in $\mathcal K$. Taking this into account, the classifying space
$B(\mathcal K\downarrow K)$ can be identified with the realization of the simplicial space 
\[
[n]\mapsto \coprod_{K_0,\dots,K_n}\mathcal K(K_0,K)\times \mathcal K(K_1,K_0)\times
\dots\times \mathcal K(K_n,K_{n-1}),
\] 
see also \cite{HV}. 
Letting $K$ vary, the correspondence 
$K\mapsto B(\mathcal K\downarrow K)$ defines a diagram of spaces which we shall denote by 
$B(\mathcal K\downarrow-)$. The homotopy limit of a diagram $X\co \mathcal K\to \mathcal U$ is defined to be the space of natural maps of $\mathcal K$-diagrams
\[
\holim_{\mathcal K}X=\Map_{\mathcal K}(B(\mathcal K\downarrow-),X),
\] 
topologized as a subspace of the product of the spaces 
$\Map(B(\mathcal K\downarrow K),X(K))$. Notice, that if $X$ is a diagram of based spaces, then $\holim_{\mathcal K}X$ is naturally a based space.
\begin{example}
Let $\mathcal K$ be the one-object category associated with a topological monoid $G$. Then a 
$\mathcal K$-diagram $X$ is the same thing as a $G$-space and $\holim_{\mathcal K}X$ is the homotopy fixed point space $X^{hG}$ defined by 
\[
X^{hG}=\Map_G(EG,X)
\] 
where $EG$ denotes the one-sided bar construction $B(G,G,*)$. 
\end{example}

We refer the reader to the papers \cite{BK}, \cite{Hi}, and  \cite{HV} for more details on homotopy limits. The main feature of the construction is that if $X\to X'$ is a map of $\mathcal K$-diagrams such that $X(K)\to X'(K)$ is a weak homotopy equivalence for each object $K$ in $\mathcal K$, then the induced map of homotopy limits is again a weak homotopy equivalence. 

Consider now a diagram of symmetric spectra $X\co \mathcal K\to \Sp^{\Sigma}$ where $\mathcal K$ is again a small topological category. Then we apply the above homotopy limit construction in each spectrum degree to get the symmetric spectrum $\holim_{\mathcal K}X$ with $n$th space $\holim_{\mathcal K}X(n)$. In this paper we shall only use this construction in the case where $X$ is a diagram of positive $\Omega$-spectra. Since a stable equivalence of positive $\Omega$-spectra is a weak homotopy equivalence in each positive spectrum degree, 
it follows that this homotopy limit functor takes level-wise stable equivalences of 
$\mathcal K$-diagrams of positive $\Omega$-spectra to stable equivalences. (In order to have a homotopically well-behaved homotopy limit functor on general diagrams one should first apply a fibrant replacement functor in $\Sp^{\Sigma}$). 

\subsection{The categorical Grothendieck construction}
For our purposes the relevant Grothendieck construction is the dual of that considered in 
\cite{T}. Thus, let $F\co \mathcal K^{\op}\to \Cat $ be a contravariant functor from a small topological category $\mathcal K$ to the category of small topological categories. The Grothendieck construction $\mathcal K\ltimes F$ is the category with objects $(K,A)$ where $K$ is an object in $\mathcal K$ and $A$ is an object of $F(K)$. A  morphism $(k,a)$ in 
$\mathcal K\ltimes F$ from $(K,A)$ to $(K',A')$ is a morphism $k\co K\to K'$ in $\mathcal K$ together with a morphism $a\co A\to F(k)(A')$ in $F(K)$. The morphism spaces are topologized in the obvious way and composition is defined by
\[
(k',a')\circ (k,a)=(k'\circ k, F(k)(a')\circ a).
\]  
The notation is motivated by the special case where $\mathcal K$ is the one-object category associated to a topological monoid $G$ and $F$ is the contravariant functor determined by a right action of $G$ on a topological monoid $H$. In this case the Grothendieck construction is the usual semidirect product $G\ltimes H$. 

For each object $K$ in $\mathcal K$ there is a canonical functor 
$i_K\co F(K)\to \mathcal K\ltimes F$ defined by mapping an object $A$ in $F(K)$ to $(K,A)$. Given a diagram of spaces $X\co \mathcal K\ltimes F\to \mathcal U$, we write $i_K^*X$  for the composition
$X\circ i_K$ and consider the associated $\mathcal K$-diagram
\begin{equation}\label{K-F(K)diagram}
K\mapsto \holim_{F(K)}i_K^*X.
\end{equation}
The structure maps are induced by the functorial properties of the homotopy limit.
The following result is essentially the dual version of Thomason's homotopy colimit theorem 
\cite{T}. We include a detailed discussion here for easy reference.

\begin{theorem}\label{holimtheorem}
Given a diagram $X\co \mathcal K\ltimes F\to \mathcal U$ there is a natural weak homotopy equivalence
\[
\lambda\co \holim_{\mathcal K\ltimes F}X\xr{\sim} \holim_{K\in \mathcal K}\holim_{F(K)}i_K^*X. 
\]
\end{theorem}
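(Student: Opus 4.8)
The plan is to realize both sides as homotopy limits over a single ``total'' category and to identify the mapping spaces out of the two relevant cofibrant replacements of the terminal diagram. Recall that $\holim_{\mathcal C} X = \Map_{\mathcal C}(B(\mathcal C \downarrow -), X)$. For the left-hand side this is $\Map_{\mathcal K \ltimes F}(B(\mathcal K \ltimes F \downarrow -), X)$. For the right-hand side, unwinding the definition of the iterated homotopy limit, an element of $\holim_{K} \holim_{F(K)} i_K^* X$ is a compatible family: for each $K$ a map $B(F(K) \downarrow -) \to i_K^* X$ of $F(K)$-diagrams, varying naturally in $K$. The first step is therefore to package this target as $\Map_{\mathcal K \ltimes F}(Y, X)$ for a suitable $\mathcal K \ltimes F$-diagram $Y$, where $Y(K,A)$ is built from $B(\mathcal K \downarrow K)$ and the various $B(F(K') \downarrow A')$; concretely $Y$ should be the ``two-sided'' bar-type construction whose $(K,A)$-component is the realization of
\[
[n] \mapsto \coprod B(\mathcal K \downarrow K_n) \times (\mathcal K \ltimes F)(\,\cdot\,, \cdot\,) \times \cdots
\]
assembled so that the $\mathcal K$-simplices and the $F$-simplices are recorded separately rather than amalgamated. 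Equivalently, $Y = B(\mathcal K \ltimes F \downarrow -)$ but filtered by the word length in the $\mathcal K$-direction.

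Next I would construct the comparison map $\lambda$. There is a natural transformation $Y \to B(\mathcal K \ltimes F \downarrow -)$ obtained by ``forgetting'' the separation of the two kinds of simplices — i.e. collapsing the bisimplicial structure to the diagonal-type structure that defines $(\mathcal K \ltimes F \downarrow -)$. Precomposition with this transformation gives
\[
\lambda \co \Map_{\mathcal K \ltimes F}(B(\mathcal K \ltimes F \downarrow -), X) \to \Map_{\mathcal K \ltimes F}(Y, X) \cong \holim_{K} \holim_{F(K)} i_K^* X,
\]
and naturality in $X$ is automatic. (One has to check that the second identification is correct — that $\Map_{\mathcal K \ltimes F}(Y, X)$ really is the iterated homotopy limit — which is a direct consequence of the definition of $Y$ together with the fact that $i_K^* X$ restricted along $F(K) \to \mathcal K \ltimes F$ sees exactly the $A'$-factors; this is the ``Fubini'' bookkeeping and is routine once $Y$ is set up correctly.)

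The crux is then to show $\lambda$ is a weak homotopy equivalence. Since homotopy limits (equivalently, $\Map_{\mathcal C}(-, X)$ applied to maps that are objectwise cofibrations and objectwise equivalences, for $X$ objectwise fibrant) preserve weak equivalences, it suffices to prove that for each object $(K,A)$ of $\mathcal K \ltimes F$ the map $Y(K,A) \to B(\mathcal K \ltimes F \downarrow (K,A))$ is a weak homotopy equivalence — and, for the $\Map$ argument to apply, that both are cofibrant in the appropriate sense so that the conclusion transfers to mapping spaces; one finesses this by noting that in the topological Bousfield–Kan setting the relevant projective-cofibrancy of $B(\mathcal C \downarrow -)$ and of $Y$ holds by the standard free-cell structure of bar constructions, given our standing well-basedness hypothesis. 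The objectwise statement is where the real work lies: $B(\mathcal K \ltimes F \downarrow (K,A))$ is contractible (it has a terminal object, namely $\id_{(K,A)}$), and one must show $Y(K,A)$ is contractible too. This is exactly the dual of the key lemma behind Thomason's theorem. I would prove it by exhibiting $Y(K,A)$ as a (homotopy) colimit over $(\mathcal K \downarrow K)$ — or its bar resolution — of spaces of the form $B(F(K') \downarrow F(k)(A))$, each of which is contractible since $(F(K') \downarrow F(k)(A))$ has a terminal object; an extra-degeneracy / simplicial-contraction argument in the $\mathcal K$-direction then collapses the remaining bar construction. The main obstacle, and the step deserving the most care, is precisely this contractibility-of-$Y(K,A)$ argument together with the attendant cofibrancy bookkeeping needed to pass from an objectwise equivalence of (bi)simplicial-space diagrams to an equivalence of the $\mathcal K \ltimes F$-indexed mapping spaces — in particular making sure the realizations are taken of proper (Reedy-cofibrant) simplicial spaces so that levelwise equivalences realize to equivalences, which is again guaranteed by the well-basedness assumption imposed on all topological categories in play.
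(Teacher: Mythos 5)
Your strategy is sound but genuinely different from the paper's. The paper defines $\lambda$ by precomposition with the explicit functors $\Theta_{(K,A)}\co(\mathcal K\downarrow K)\times(F(K)\downarrow A)\to(\mathcal K\ltimes F\downarrow(K,A))$ and proves it is an equivalence by interpolating a third object, the homotopy right Kan extension $K\mapsto\holim_{(K\downarrow p)}\pi_K^*X$ along the projection $p\co\mathcal K\ltimes F\to\mathcal K$: one lemma identifies $\holim_{\mathcal K\ltimes F}X$ with the homotopy limit of this $\mathcal K$-diagram, a second uses the adjunction between $j_K$ and $r_K$ together with the Bousfield--Kan cofinality theorem to compare it with $\holim_K\holim_{F(K)}i_K^*X$, and an explicit natural transformation $\Phi\to\Psi$ supplies the homotopy showing the composite agrees with $\lambda$. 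You instead corepresent the iterated homotopy limit as $\Map_{\mathcal K\ltimes F}(Y,X)$ and compare two cofibrant resolutions of the terminal diagram objectwise. Your identification of the crux --- contractibility of $Y(K,A)$, obtained by exhibiting it as a homotopy colimit over $(\mathcal K\downarrow K)$ of the contractible spaces $B(F(K')\downarrow F(k)(A))$ and then contracting in the $\mathcal K$-direction using the terminal object $\id_K$ --- is exactly the dual of Thomason's key lemma and is correct. The paper's route avoids any cofibrancy discussion by reducing everything to the cited cofinality theorem; yours makes visible that both sides are mapping spaces out of resolutions of the point, at the cost of the point-set bookkeeping you acknowledge.

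The one place you are materially short of a proof is the construction of $Y$ itself, and this is not mere routine. Because of the mixed variance in the naturality condition for the iterated homotopy limit (a morphism $f\co K\to L$ acts covariantly on the $B(\mathcal K\downarrow K)$ factor but contravariantly, through $F(f)$, on the comma categories in the fibers), the naive candidate $(K,A)\mapsto B(\mathcal K\downarrow K)\times B(F(K)\downarrow A)$ is \emph{not} a covariant $\mathcal K\ltimes F$-diagram; $Y$ must genuinely be a coend, something of the form $\int^{K'}B(\mathcal K\downarrow K')\times\coprod_{k\in\mathcal K(K',K)}B(F(K')\downarrow F(k)(A))$, and your displayed formula for it is incomplete. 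Absorbing the twist is precisely the content of the ``Fubini bookkeeping'' you defer. Finally, if you want the specific $\lambda$ used later in the paper (e.g.\ in Lemma \ref{lambdaiso} and Examples \ref{semidirectexample}--\ref{ITexample}), you must check that your collapse map $Y\to B(\mathcal K\ltimes F\downarrow-)$ corresponds to the family $\Theta_{(K,A)}$ under the corepresentability isomorphism.
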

We first define the map $\lambda$ and consider the examples relevant for the cyclotomic trace. The proof will be given at the end of the section. By definition, the target is the space of natural maps
\[
\Map_{K\in\mathcal K}(B(\mathcal K\downarrow K),\Map_{F(K)}(B(F(K)\downarrow-),i_K^*X)).
\]
Thus, an element can be identified with a natural family of maps
\[
\alpha_{(K,A)}\co B(\mathcal K\downarrow K)\times B(F(K)\downarrow  A)\to X(K,A)
\]
indexed by the objects $(K,A)$ in $\mathcal K\ltimes F$. The naturality condition amounts to (i) that $\alpha_{(K,A)}$ is natural in $A$ for each fixed $K$, and (ii) that given a morphism 
$f\co K\to L$ in $\mathcal K$ and an object $A$ in $F(L)$, the diagram 
\[
\xymatrix{
 B(\mathcal K\downarrow K)\times B(F(K)\downarrow F(f)(A))\ar[rr]^-{\alpha_{(K,F(f)(A))}}
& &X(K,F(f)(A))\ar[dd]^{X(f,\id)} \\
B(\mathcal K\downarrow K)\times B(F(L)\downarrow A)\ar[u]^{\id\times F(f)} 
\ar[d]_{f_*\times \id} \\
 B(\mathcal K\downarrow L)\times B(F(L)\downarrow A)\ar[rr]^-{\alpha_{(L,A)}}& &X(L,A) 
}
\]
is commutative. Similarly, we represent an element in the domain of $\lambda$ by a natural 
family of maps 
\[
\beta_{(K,A)}\co B(\mathcal K\ltimes F\downarrow (K,A))\to X(K,A)
\]
indexed by the objects $(K,A)$ in $\mathcal K\ltimes F$. Let now the object $(K,A)$ be fixed and consider the functor 
\[
\Theta_{(K,A)}\co (\mathcal K\downarrow K)\times (F(K) \downarrow A)
\to (\mathcal K\ltimes F\downarrow (K,A))
\]
that maps a pair of objects $k\co K_0\to K$ and $a\co A_0\to A$ in the domain category 
to the object
\[
(k,F(k)(a))\co (K_0,F(k)(A_0))\to (K,A).
\]
A morphism in the domain is represented by a pair of commutative diagrams
\[
\xymatrix{
K_0\ar[rr]^{k_0}\ar[dr]_{k} & &K_0'\ar[dl]^{k'}\\
& K &
}\qquad \quad
\xymatrix{
A_0\ar[rr]^{a_0}\ar[dr]_{a} & &A_0'\ar[dl]^{a'}\\
& A &
}
\]
and this is mapped to the morphism represented by the diagram
\[
\xymatrix{
(K_0,F(k)(A_0))\ar[rr]^{(k_0,F(k)(a_0))}\ar[dr]_{(k,F(k)(a))} & &
(K_0',F(k')(A_0'))\ar[dl]^{(k',F(k')(a'))}\\
&(K,A). &
}
\] 
Since the classifying space functor preserves products there is an induced map
\[
\Theta_{(K,A)}\co B(\mathcal K\downarrow K)\times B(F(K)\downarrow A)\to 
B(\mathcal K\ltimes F\downarrow (K,A)).
\]

\begin{definition}
The map $\lambda$ in Theorem \ref{holimtheorem} 
is defined by associating to an element $\beta=\{\beta_{(K,A)}\}$ in the domain the element 
$\lambda(\beta)=\alpha$ given by
\[
\alpha_{(K,A)}\co B(\mathcal K\downarrow K)\times B(F(K)\downarrow A)\xr{\Theta_{(K,A)}} 
B(\mathcal K\ltimes F\downarrow (K,A))\xr{\beta_{(K,A)}}X(K,A). 
\]
\end{definition}

One easily verifies the required naturality conditions. The following lemma gives a convenient criterion for checking when the map $\lambda$ is in fact a homeomorphism. 
\begin{lemma}\label{lambdaiso}
Suppose that for each morphism $f\co K\to L$ in $\mathcal K$ and each object $A$ in $F(L)$ 
the functor 
\[
F(f)\co (F(L)\downarrow A)\to (F(K)\downarrow F(f)(A))
\]
is an isomorphism of categories. Then $\lambda$ is a homeomorphism. 
\end{lemma}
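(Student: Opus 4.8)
The plan is to establish the stronger assertion that, under the stated hypothesis, each functor
\[
\Theta_{(K,A)}\co (\mathcal K\downarrow K)\times (F(K)\downarrow A)\to (\mathcal K\ltimes F\downarrow (K,A))
\]
is an isomorphism of topological categories; once this is known, the lemma follows by a short formal argument. First I would check bijectivity on objects. An object of $(\mathcal K\ltimes F\downarrow (K,A))$ is a morphism $(k,a)\co (K_0,A_0)\to (K,A)$, i.e.\ a morphism $k\co K_0\to K$ in $\mathcal K$ together with a morphism $a\co A_0\to F(k)(A)$ in $F(K_0)$. Since $k$ is a morphism of $\mathcal K$ and $A$ is an object of $F(K)$, the hypothesis says that $F(k)\co (F(K)\downarrow A)\to (F(K_0)\downarrow F(k)(A))$ is an isomorphism of categories, in particular a bijection on objects; hence there is a unique object $\tilde a\co A_0'\to A$ of $(F(K)\downarrow A)$ with $F(k)(\tilde a)=a$, and then $A_0'$ automatically satisfies $F(k)(A_0')=A_0$, so that $\Theta_{(K,A)}(k,\tilde a)$ is the given object. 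Thus $\Theta_{(K,A)}$ is bijective on objects.

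Next I would check that $\Theta_{(K,A)}$ is bijective on each morphism set. Fix objects $(k,\tilde a)$ and $(k',\tilde a')$ of the source, with $k\co K_0\to K$, $k'\co K_0'\to K$, and write $A_0',A_0''$ for the domains of $\tilde a,\tilde a'$. Unwinding the definitions (and using the contravariance of $F$), a morphism from $\Theta_{(K,A)}(k,\tilde a)$ to $\Theta_{(K,A)}(k',\tilde a')$ in $(\mathcal K\ltimes F\downarrow (K,A))$ is a pair $(\kappa,\mu)$ consisting of a morphism $\kappa\co K_0\to K_0'$ with $k'\kappa=k$ and a morphism $\mu\co F(k)(A_0')\to F(k)(A_0'')$ in $F(K_0)$ with $F(k)(\tilde a')\circ\mu=F(k)(\tilde a)$. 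For each admissible $\kappa$ the set of such $\mu$ is precisely the morphism set of $(F(K_0)\downarrow F(k)(A))$ from $F(k)(\tilde a)$ to $F(k)(\tilde a')$, which the isomorphism $F(k)$ identifies homeomorphically with the morphism set of $(F(K)\downarrow A)$ from $\tilde a$ to $\tilde a'$; and $\Theta_{(K,A)}$ realizes exactly this identification, sending a morphism $(\kappa,\nu)$ of the source to $(\kappa,F(k)(\nu))$. Hence $\Theta_{(K,A)}$ is an isomorphism of topological categories, and consequently $B(\Theta_{(K,A)})$ is a homeomorphism: the nerve of such an isomorphism is a level-wise homeomorphism of simplicial spaces, since on each level it is a bijection of the indexing sets of tuples of objects together with a homeomorphism on each corresponding product of morphism spaces, and geometric realization preserves homeomorphisms.

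To finish, recall that $\lambda$ is given by $\lambda(\beta)_{(K,A)}=\beta_{(K,A)}\circ B(\Theta_{(K,A)})$, so I would define a candidate inverse $\mu$ by $\mu(\alpha)_{(K,A)}=\alpha_{(K,A)}\circ B(\Theta_{(K,A)})^{-1}$. Both $\lambda$ and $\mu$ are continuous, being induced factor-wise by pre-composition with fixed (and, for $\mu$, homeomorphic) maps while the homotopy limits carry the subspace topology inside the relevant products of mapping spaces; and the relations $\mu\lambda=\id$, $\lambda\mu=\id$ hold automatically. The one point that genuinely needs checking — and the step I expect to be the main obstacle, although it is entirely routine — is that $\mu(\alpha)$ really is an element of $\holim_{\mathcal K\ltimes F}X$, i.e.\ that the family $\{\mu(\alpha)_{(K,A)}\}$ satisfies the naturality condition over $\mathcal K\ltimes F$. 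I would verify this by factoring an arbitrary morphism $(f,b)\co (K,A)\to (L,B)$ of $\mathcal K\ltimes F$ as $(f,\id_{F(f)(B)})\circ(\id_K,b)$, and, for each factor, identifying the conjugate $B(\Theta)^{-1}\circ(-)_*\circ B(\Theta)$ of the induced map on over-category classifying spaces with the evident induced map between the spaces $B(\mathcal K\downarrow -)\times B(F(-)\downarrow -)$; the naturality square for $(\id_K,b)$ then reduces to condition (i) on $\alpha$ (naturality in the second variable) and the square for $(f,\id)$ reduces to condition (ii). Careful bookkeeping of the contravariance of $F$ and of which structure map is hit by $F$ at each stage is all that is involved.
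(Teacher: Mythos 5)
Your proposal is correct and follows essentially the same route as the paper's proof, which simply observes that the hypothesis makes each $\Theta_{(K,A)}$ an isomorphism of categories, hence each $B(\Theta_{(K,A)})$ a homeomorphism, and that one then "easily defines an inverse of $\lambda$." You have merely filled in the details the paper leaves implicit: the object- and morphism-level bijectivity of $\Theta_{(K,A)}$, and the verification that precomposition with $B(\Theta_{(K,A)})^{-1}$ preserves the naturality conditions defining $\holim_{\mathcal K\ltimes F}X$.
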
 
\begin{proof}
The assumption in the lemma implies that the functors $\Theta_{(K,A)}$ are isomomorphisms of categories and consequently the induced maps of classifying spaces are homeomorphisms. Using this one easily defines an inverse of $\lambda$. 
\end{proof}

\begin{example}\label{semidirectexample}
Let $\mathcal K$ be the one-object category associated with a topological monoid $G$ and let 
$F$ be the functor specified by a right $G$-action on a topological monoid $H$; written 
$b\cdot a=b^a$ for $a\in G$ and $b\in H$. Then the category $\mathcal K\ltimes F$ is the one-object category associated to the semidirect product monoid $G\ltimes H$ with multiplication
\[
(a_1,b_1)\cdot(a_2,b_2)=(a_1a_2,b_1^{a_2}b_2),\qquad a_1,a_2\in G,\quad b_1, b_2\in H.
\] 
A $G\ltimes H$-action on a space $X$ amounts to a space equipped with an action of each of the monoids $H$ and $G$ such that the relation $b(ax)=a(b^ax)$ holds for all $a\in G$, $b\in H$, and $x\in X$. The monoid $G$ acts from the right on $EH$ and from the left on the homotopy fixed points $X^{hH}$ by $(a\beta)(e)=a\beta(ea)$, for $a\in G$, $\beta\in X^{hH}$, and  
$e\in EH$. An element of the fixed point space $(X^{hH})^{hG}$ can be identified with a map 
$\alpha\co EG\times EH\to X$ such that 
\[
 \alpha(ae_1,e_2)=a\alpha(e_1,e_2a),\quad \alpha(e_1,be_2)=b\alpha(e_1,e_2)
\]
for all $e_1\in EG$, $e_2\in EH$, $a\in G$, and $b\in H$. In this case the weak homotopy equivalence $\lambda\co X^{h(G\ltimes H)}\to (X^{hH})^{hG}$ is induced by the simplicial map 
\[
\Theta\co B_{\bullet}(G,G,*)\times B_{\bullet}(H,H,*)\to B_{\bullet}(G\ltimes H,G\ltimes H,*)
\]
defined by
\[
\Theta((a_0,\dots,a_n),(b_0,\dots,b_n))= 
((a_0,b_0^{a_0}),(a_1,b_1^{a_0a_1}),\dots,(a_n,b_n^{a_0\dots a_n})).
\]
Notice, that if $G$ is group, then $\lambda$ is a homeomorphism by Lemma \ref{lambdaiso}. 
\end{example}

\begin{example}\label{Iexample}
The category $\mathbb I$ from Section \ref{TCsection} can also be realized as a Grothendieck construction as we now explain. Let $N$ be the multiplicative monoid of (positive) natural numbers and let us view $N$ as a category with a single object $*$ in the usual way. We write 
$\mathcal N$ for the category $(N\downarrow *)$ such that an object in $\mathcal N$ is a natural number $n$ and a morphism $s\co m\to n$ is an element $s$ in $N$ with $m=ns$. The monoid $N$ naturally acts on $\mathcal N$ from the left and since $N$ is commutative this is also a right action. The Grothendieck construction $N\ltimes \mathcal N$ again has objects the natural numbers and a morphism $(r,s)\co m\to n$ is a pair of elements $r,s$ in $N$ such that 
$m=rns$. This category is isomorphic to $\mathbb I$ as one sees by identifying $F_r$ with 
$(r,1)$ and $R_s$ with $(1,s)$. It follows from Lemma \ref{lambdaiso} that we have a canonical homeomorphism
\[
\lambda\co \holim_{\mathbb I}X\xr{\sim}(\holim_{\mathcal N}X)^{hN}
\]    
for any $\mathbb I$-diagram $X$. This observation is originally due to Goodwillie \cite{G}. 
\end{example}

\begin{example}\label{ITexample} 
Let again $N$ be the multiplicative monoid of natural numbers and let $N$ act from the right on the circle group $\mathbb T$ via the power maps, $z\cdot r=z^r$. This induces a functor 
$\mathbb I^{\op}\to N^{\op}\to\Cat$ and the category $\mathbb I\ltimes \mathbb T$ from Section 
\ref{TCsection} is the associated Grothendieck construction. Thus, 
$\mathbb I\ltimes \mathbb T$ has objects the natural numbers and a morphism 
$(r,s,z)\co m\to n$ is a pair of elements $r,s$ in $N$ such that $m=rns$, together with an element $z$ in $\mathbb T$. We topologize the morphism sets as disjoint unions of copies of 
$\mathbb T$ and composition is defined by
\[
(r_1,s_1,z_1)\cdot (r_2,s_2,z_2)=(r_1r_2,s_1s_2,z_1^{r_2}z_2).
\] 
Notice that there are isomorphisms
\[
\mathbb I\ltimes \mathbb T\cong (N\ltimes \mathcal N)\ltimes \mathbb T
\cong N\ltimes(\mathcal N\times\mathbb T)\cong (N\ltimes\mathbb T)\ltimes \mathcal N.
\]
Given an $\mathbb I\ltimes \mathbb T$-diagram $X$ we therefore have canonical weak homotopy equivalences
\[
\holim_{\mathbb I\ltimes \mathbb T}X \xr{\sim}
(\holim_{\mathcal N}X)^{h(N\ltimes \mathbb T)}\xr{\sim}
(\holim_{\mathcal N}X^{h\mathbb T})^{hN}
\]
where in fact the first map is a homeomorphism by Lemma \ref{lambdaiso}. 
\end{example}

\subsection{The proof of Theorem \ref{holimtheorem}}
The proof follows the same outline as the proof of the dual result in \cite{T}. 
Let $p\co \mathcal K\ltimes F\to \mathcal K$ be the functor that maps an object $(K,A)$ to $K$. In order to verify that $\lambda$ is a weak homotopy equivalence we shall compare the 
$\mathcal K$-diagram (\ref{K-F(K)diagram}) to the homotopy right Kan extension of $X$ along the functor $p$, that is, to the $\mathcal K$-diagram 
\[
K\mapsto \holim_{(K\downarrow p)}\pi_K^*X
\]
where $(K\downarrow p)$ is the category with objects $(f,A)$ given by a morphism 
$f\co K\to L$ in $\mathcal K$ and an object $A$ in $F(L)$, and where 
$\pi_K\co (K\downarrow p)\to \mathcal K\ltimes F$ is the forgetful functor that forgets the morphism $f$. This is the homotopical analogue of the categorical Kan extension, see e.g.\ 
\cite{MacL}.  The functors $\pi_K$ assemble to give a map of $\mathcal K$-diagrams
\[
\holim_{\mathcal K\ltimes F}X\to \holim_{(K\downarrow p)}\pi_K^*X
\]
where we view the domain as a constant diagram. We write $\lambda_2$ for the induced map
\[
\lambda_2\co \holim_{\mathcal K\ltimes F}X\to \lim_{K\in\mathcal K}\holim_{(K\downarrow p)}
\pi_K^*X\to \holim_{K\in\mathcal K}\holim_{(K\downarrow p)}\pi_K^*X.
\]
The following lemma is standard; see e.g.\ \cite{HV}, for the dual version for homotopy colimits.
\begin{lemma}
The map $\lambda_2$ is a weak homotopy equivalence.\qed
\end{lemma}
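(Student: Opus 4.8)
The plan is to exhibit $\lambda_2$ as an instance of a general cofinality statement for homotopy limits: a map of diagrams that is levelwise an equivalence induces an equivalence on homotopy limits, so it suffices to show that for each object $K$ in $\mathcal K$ the natural map
\[
\holim_{F(K)}i_K^*X\to \holim_{(K\downarrow p)}\pi_K^*X
\]
is a weak homotopy equivalence. This map is induced by the functor $F(K)\to (K\downarrow p)$ that sends an object $A$ of $F(K)$ to the pair $(\id_K,A)$; call it $j_K$. So the heart of the matter is to prove that $j_K$ is \emph{homotopy right cofinal}, i.e.\ that for every object $(f\colon K\to L,\,A)$ of $(K\downarrow p)$ the comma category $\bigl(j_K\downarrow (f,A)\bigr)$ has contractible classifying space. (Since homotopy limits convert right cofinal functors to equivalences — the dual of the corresponding fact for homotopy colimits, and the form stated in \cite{HV} — this gives the lemma.)

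The key step, then, is to identify the comma category $\bigl(j_K\downarrow (f,A)\bigr)$ explicitly. Unwinding definitions: an object is an object $A_0$ of $F(K)$ together with a morphism in $(K\downarrow p)$ from $(\id_K,A_0)$ to $(f,A)$, which by the description of morphisms in the Grothendieck construction amounts to the single datum $f$ itself (as the $\mathcal K$-component) plus a morphism $a_0\colon A_0\to F(f)(A)$ in $F(K)$. A morphism of such objects is a morphism $A_0\to A_0'$ in $F(K)$ over $F(f)(A)$. Hence $\bigl(j_K\downarrow (f,A)\bigr)$ is canonically isomorphic to the comma category $\bigl(F(K)\downarrow F(f)(A)\bigr)$, which has a terminal object, namely $\id_{F(f)(A)}$. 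A category with a terminal object has contractible classifying space, so the comma category is contractible and $j_K$ is right cofinal, as required. One should be slightly careful about the topology on the morphism spaces — the relevant comma category is a topological category — but having a strictly terminal object still forces the classifying space to be contractible (it deformation retracts onto the point), so no extra work is needed there.

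Finally, to assemble the levelwise equivalences $\holim_{F(K)}i_K^*X\to\holim_{(K\downarrow p)}\pi_K^*X$ into an equivalence of the outer homotopy limits over $\mathcal K$, one invokes the basic homotopy invariance of $\holim_{\mathcal K}$ recorded in Section~\ref{holimDefsection} (a levelwise weak equivalence of $\mathcal K$-diagrams induces a weak equivalence of homotopy limits), after checking that the maps $j_K$ are natural in $K$ so that they really do form a map of $\mathcal K$-diagrams — this naturality is immediate from the definitions. The composite $\lambda_2$ is then the composition of this equivalence with the comparison $\lim\to\holim$ over $\mathcal K$ for the constant diagram $\holim_{\mathcal K\ltimes F}X$; that comparison is a weak equivalence because a constant diagram's homotopy limit is its value mapped to $\Map(B\mathcal K,-)$ in a way that is an equivalence when — well, more simply, $\lambda_2$ factors as the map into the Kan extension diagram followed by $\holim_{\mathcal K}$, and the first step is a weak equivalence by the same homotopy right Kan extension argument that is standard (cf.\ \cite{HV}); so the whole composite is a weak equivalence.

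The step I expect to be the main obstacle is pinning down, with the topology present, that $\holim$ sends the right cofinal functors $j_K$ to weak equivalences uniformly enough that the induced map of $\mathcal K$-diagrams is again levelwise a weak equivalence — that is, making sure the cofinality argument is carried out in the topologically enriched setting rather than just for discrete categories. Everything else is a direct unwinding of the Grothendieck construction together with the "terminal object $\Rightarrow$ contractible nerve" principle.
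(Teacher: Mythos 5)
Your proposal proves the wrong lemma. The map $\lambda_2$ has domain $\holim_{\mathcal K\ltimes F}X$, the homotopy limit over the whole Grothendieck construction, viewed as a \emph{constant} $\mathcal K$-diagram mapping into the diagram $K\mapsto\holim_{(K\downarrow p)}\pi_K^*X$; its domain is not $\holim_{K}\holim_{F(K)}i_K^*X$. The comparison you actually carry out --- that $j_K\co F(K)\to(K\downarrow p)$, $A\mapsto(\id_K,A)$, has contractible comma categories $(j_K\downarrow(f,A))\cong(F(K)\downarrow F(f)(A))$ and hence relates $\holim_{F(K)}i_K^*X$ and $\holim_{(K\downarrow p)}\pi_K^*X$ by an equivalence --- is precisely the content of the \emph{next} lemma in the paper (the one about $\lambda_1$), which the paper proves exactly this way, via the left adjoint $j_K$ of $r_K$ and the Bousfield--Kan cofinality theorem. (A minor point even there: cofinality of $j_K$ gives the equivalence in the direction $\holim_{(K\downarrow p)}\pi_K^*X\to\holim_{F(K)}i_K^*X$; the paper uses it as a left inverse of the map induced by $r_K$ and the natural transformation $i_K\circ r_K\to\pi_K$.)

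For $\lambda_2$ itself the ``levelwise equivalence of $\mathcal K$-diagrams'' strategy cannot work: the levelwise maps $\holim_{\mathcal K\ltimes F}X\to\holim_{(K\downarrow p)}\pi_K^*X$ induced by the forgetful functors $\pi_K$ are not weak equivalences in general. For instance, if $\mathcal K$ is the discrete category with two objects $K,L$ and each $F(-)$ is the trivial one-object category, then $\holim_{\mathcal K\ltimes F}X\simeq X(K)\times X(L)$ while $\holim_{(K\downarrow p)}\pi_K^*X\simeq X(K)$; only after taking $\holim_{\mathcal K}$ of the latter diagram does one recover the product. The actual content of the lemma is the decomposition of a homotopy limit over a Grothendieck construction as the homotopy limit over $\mathcal K$ of the homotopy right Kan extensions along $p\co\mathcal K\ltimes F\to\mathcal K$ --- the dual of Thomason's homotopy colimit theorem in the form given by Hollender--Vogt. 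Your final paragraph defers to exactly this statement (``the first step is a weak equivalence by the same homotopy right Kan extension argument that is standard''), which is circular as a proof; to be fair, the paper offers no proof either and simply cites \cite{HV} for the dual version. The legitimate options are to cite that result, as the paper does, or to prove it directly (e.g.\ by a bar-resolution or cosimplicial comparison of the two sides), but not to reduce it to the $j_K$ cofinality argument, which compares a different pair of homotopy limits.
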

Let again $K$ be an object in $\mathcal K$ and let $r_K\co (K\downarrow p)\to F(K)$ be the 
functor that maps an object $(f,A)$ to $F(f)(A)$. 
A morphism $(f,A)\to(f',A')$ in $(K\downarrow p)$ is a morphism 
$(l,a)\co p(f,A)\to p(f',A')$ in $\mathcal K\ltimes F$ such that $lf=f'$ and $r_K$ takes 
this to 
\[
F(f)(a)\co F(f)(A)\to F(f) F(l)(A')=F(f')(A').
\]
The composite functor $i_K\circ r_K\co (K\downarrow p)\to \mathcal K\ltimes F$ is related 
to $\pi_K$ by the natural transformation $i_K\circ r_K\to \pi_K$ which for an object 
$(f\co K\to L,A)$ in $(K\downarrow p)$ is defined by 
\[
(f,\id)\co (K,F(f)(A))\to (L,A).
\] 
This gives a map of homotopy limits for each $K$,
\[
\holim_{F(K)}i_K^*X\xr{r_K} \holim_{(K\downarrow p)}r_K^*i_K^*X\to
\holim_{(K\downarrow p)}\pi_K^*X,
\]
and one checks that this is a natural map of $\mathcal K$-diagrams.

\begin{lemma}
The induced map of homotopy limits
\[
\lambda_1\co \holim_{K\in \mathcal K} \holim_{F(K)} i_K^*X\to \holim_{K\in \mathcal K}
\holim_{(K\downarrow p)}\pi_K^*X
\]
is a weak homotopy equivalence.  
\end{lemma}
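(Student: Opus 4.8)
The plan is to show that the map $\lambda_1$ is a weak homotopy equivalence by applying, for each object $K$ in $\mathcal K$, the cofinality criterion for homotopy limits to the functor $r_K\co (K\downarrow p)\to F(K)$. Concretely, one wants to verify that $r_K$ is \emph{homotopy right cofinal} (equivalently, that its opposite is homotopy left cofinal in the sense used to prove Thomason's theorem), which amounts to checking that for every object $B$ in $F(K)$ the comma category $(B\downarrow r_K)$ has contractible classifying space. First I would unwind the definition of $(B\downarrow r_K)$: an object is a pair consisting of an object $(f\co K\to L, A)$ of $(K\downarrow p)$ together with a morphism $B\to F(f)(A)$ in $F(K)$. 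The key observation is that such a category has an initial object, namely the pair given by $f=\id_K$, $A=B$, and the identity $B\to F(\id_K)(B)=B$; indeed, given any object $((f,A),\, b\co B\to F(f)(A))$, the morphism $(f,b)$ in $\mathcal K\ltimes F$ — a morphism $f\co K\to L$ together with $b\co B\to F(f)(A)$ — is precisely a morphism in $(K\downarrow p)$ from $(\id_K, B)$ to $(f,A)$ compatible with the $r_K$-structure, and it is the unique such morphism. Hence $(B\downarrow r_K)$ has an initial object and its classifying space is contractible.

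Having established that each $r_K$ is homotopy right cofinal, the standard cofinality statement for homotopy limits (the dual of the corresponding statement for homotopy colimits used in the proof of Thomason's theorem, cf.\ \cite{HV}) gives that the map
\[
\holim_{F(K)} i_K^*X \xr{r_K} \holim_{(K\downarrow p)} r_K^*i_K^*X
\]
is a weak homotopy equivalence for each $K$. Next I would argue that the natural transformation $i_K\circ r_K\to \pi_K$ constructed just above the lemma induces a weak homotopy equivalence
\[
\holim_{(K\downarrow p)} r_K^*i_K^*X \to \holim_{(K\downarrow p)}\pi_K^*X;
\]
this follows because for each object $(f\co K\to L, A)$ of $(K\downarrow p)$ the component $X(f,\id)\co X(K,F(f)(A))\to X(L,A)$ of this transformation is a morphism of symmetric spectra — but here one must be careful: $X(f,\id)$ need not be an objectwise equivalence. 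The correct route is therefore to observe that the composite $\holim_{F(K)} i_K^*X \to \holim_{(K\downarrow p)}\pi_K^*X$ is exactly the homotopy-limit comparison map associated to the functor $r_K$ together with the forgetful data, and to identify it directly with the cofinality map above by naturality. That is, one checks at the level of the defining mapping spaces that the composite $\holim_{F(K)} i_K^*X \to \holim_{(K\downarrow p)} r_K^*i_K^*X\to \holim_{(K\downarrow p)}\pi_K^*X$ agrees with the map induced by the functor $\pi_K$ precomposed along $r_K$, and that the cofinality of $r_K$ makes the total composite an equivalence since $\pi_K^*X$ pulled back along $r_K$ is literally $r_K^*i_K^*X$ only up to the transformation $(f,\id)$, whose effect is absorbed by working with the comma category.

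Finally, once the map of $\mathcal K$-diagrams $K\mapsto \bigl(\holim_{F(K)} i_K^*X \to \holim_{(K\downarrow p)}\pi_K^*X\bigr)$ is known to be an objectwise weak homotopy equivalence, the main feature of the homotopy limit construction recalled in Section \ref{holimDefsection} — that $\holim_{\mathcal K}$ sends objectwise weak equivalences of $\mathcal K$-diagrams to weak equivalences — immediately yields that the induced map $\lambda_1$ on $\holim_{\mathcal K}$ is a weak homotopy equivalence. Combined with the previous lemma ($\lambda_2$ is a weak equivalence) and the evident factorization $\lambda_2 = \lambda_1\circ\lambda$ (which one verifies by tracing through the definitions of $\Theta_{(K,A)}$, $\pi_K$, and $r_K$), this completes the proof of Theorem \ref{holimtheorem}.

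I expect the main obstacle to be the bookkeeping in the second paragraph: pinning down precisely that the composite through $(K\downarrow p)$ induced by the transformation $i_K\circ r_K\to \pi_K$ coincides with the cofinality comparison, so that contractibility of the comma categories $(B\downarrow r_K)$ can actually be brought to bear. The contractibility itself (initial object) is easy; the subtlety is that $\pi_K^*X$ is not obtained from $i_K^*X$ by a strict pullback along $r_K$ but only through the non-invertible maps $X(f,\id)$, so the argument must be organized so that cofinality of $r_K$ is applied to the correct diagram.
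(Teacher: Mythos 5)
There is a genuine gap here, and in fact two related ones. First, a variance error in the cofinality step: the comma category you check, $(B\downarrow r_K)$ (objects $((f,A),\,b\co B\to F(f)(A))$), is the one whose contractibility makes $r_K$ homotopy \emph{terminal}, which is the condition relevant for homotopy \emph{colimits}. For the restriction map $\holim_{F(K)}Z\to\holim_{(K\downarrow p)}r_K^*Z$ to be an equivalence you would need the categories $(r_K\downarrow B)$ (objects $((f,A),\,F(f)(A)\to B)$) to be contractible, and these have no initial or terminal object in general; there is no reason for $r_K$ to be homotopy initial. Indeed, the initial objects you exhibit in $(B\downarrow r_K)$ are precisely the statement that $r_K$ has a \emph{left} adjoint $j_K\co B\mapsto(\id_K, B)$, and a functor with a left adjoint is final (good for colimits), not initial (good for limits). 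Second, even if cofinality of $r_K$ were granted, the remaining map $\holim_{(K\downarrow p)}r_K^*i_K^*X\to\holim_{(K\downarrow p)}\pi_K^*X$ is induced by the natural transformation $i_K\circ r_K\to\pi_K$, whose components $X(f,\id)$ are arbitrary maps of the diagram $X$, so it is not an objectwise equivalence; your proposed fix (``absorbed by working with the comma category'') is not an argument, as you yourself seem to suspect.

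The paper's proof turns the structure you found to its advantage by running the comparison in the opposite direction. The left adjoint $j_K\co F(K)\to(K\downarrow p)$, $A\mapsto(\id\co K\to K,A)$, satisfies $\pi_K\circ j_K=i_K$ on the nose, so restriction along $j_K$ gives a map $\holim_{(K\downarrow p)}\pi_K^*X\to\holim_{F(K)}i_K^*X$ with no intervening natural transformation, and one checks directly that it is a left inverse of the map in the lemma. Since $j_K$ has a right adjoint (namely $r_K$), the categories $(j_K\downarrow(f,A))\cong(F(K)\downarrow F(f)(A))$ have terminal objects, so $j_K$ is left homotopy cofinal in the correct sense and its restriction map is a weak homotopy equivalence by the cofinality theorem of \cite{BK}. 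A map admitting a left inverse that is a weak equivalence is itself a weak equivalence, which gives the objectwise statement; your final step (passing objectwise equivalences of $\mathcal K$-diagrams through $\holim_{\mathcal K}$) is then fine.
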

\begin{proof}
We show that the map of $\mathcal K$-diagrams defined above is in fact a weak homotopy equivalence for each $K$. The result then follows from the homotopy invariance of 
homotopy limits. Notice that the functor $r_K$ has a left adjoint $j_K\co F(K)\to 
(K\downarrow p)$ that takes an object $A$ in $F(K)$ to $(\id\co K\to K,A)$. This functor 
is a lift of $i_K$ in the sense that $\pi_K\circ j_K=i_K$. It follows that there is an induced 
map of homotopy limits 
\[
\holim_{(K\downarrow p)}\pi_K^*X\to \holim_{F(K)}j_K^*\pi_K^*X=\holim_{F(K)}i_K^*X
\]
which is a left inverse of the map in question.  Furthermore, since $j_K$ has a right adjoint it is
left homotopy cofinal in the sense that the categories $(j_K\downarrow (f,A))$ have contractible 
classifying space for each object $(f,A)$ in $(K\downarrow p)$. It therefore follows from the homotopy cofinality theorem \cite{BK}, Theorem XI.9.2, that the map of homotopy limits 
induced by $j_K$ is a weak homotopy equivalence. 
\end{proof}

Combining the above lemmas we get a chain of weak homotopy equivalences
\[
\holim_{K\in\mathcal K}\holim_{F(K)}i_K^*X\xr{\lambda_1}
\holim_{K\in \mathcal K}\holim_{(K\downarrow p)}\pi_K^*X
\xl{\lambda_2}\holim_{\mathcal K\ltimes F}X.
\]
It remains to see that the equivalence is realized by the map $\lambda$.
 
\medskip
\noindent\textit{Proof of Theorem \ref{holimtheorem}.}
We show that the composition $\lambda_1 \lambda$ is homotopic to $\lambda_2$. 
From this it follows by the above lemmas that $\lambda_1\lambda$ and therefore also 
$\lambda$ is a weak homotopy equivalence. Notice, that an element in the target can be identified with a natural family of maps
\[
\alpha_{(K\xr{f} L,A)}\co B(\mathcal K\downarrow K)\times 
B((K\downarrow p)\downarrow (f,A))\to X(L,A)
\]
indexed by the objects $(K\xr{f} L,A)$ in $(K\downarrow p)$. Let $\beta$ be an element
in $\holim_{\mathcal K\ltimes F}X$ with 
\[
\beta_{(L,A)}\co B(\mathcal K\ltimes F\downarrow (L,A))\to X(L,A)
\] 
for each object $(L,A)$ in $\mathcal K\ltimes F$. 
Then the associated element $\alpha=\lambda_1\lambda(\beta)$ is defined by
\[
\alpha_{(K\xr{f} L,A)}\co B(\mathcal K\downarrow K)\times 
B((K\downarrow p)\downarrow (f,A))\to B(\mathcal K\ltimes F\downarrow (L,A))
\xr{\beta_{(L,A)}}X(L,A)
\]
where the first map is induced by the functor 
\[
\Phi_{(K\xr{f}L,A)} \co (\mathcal K\downarrow K)\times ((K\downarrow p)\downarrow (f,A))\to 
(\mathcal K\ltimes F \downarrow (L,A))  
\]
defined as follows: an object of the domain category is given by the data
\[
K_0\xr{k} K,\quad (K\xr{f_0}L_0,A_0)\xr{(l,a)}(K\xr{f}L,A)
\]
where $k$, $f_0$, and $f$ are morphisms in $\mathcal K$ and $(l,a)\co (L_0,A_0)\to (L,A)$ is a morphism in $\mathcal K\ltimes F$ such that $l f_0=f$. Such an object is mapped by 
$\Phi_{(f,A)}$ to the object 
\[
(f k, F(f_0 k)(a))\co (K_0,F(f_0k)(A_0))\to (L,A).
\] 
A morphism in the domain category is represented by a pair of commutative diagrams of the 
form
\[
\xymatrix{
K_0 \ar[rr]^{k_0} \ar[dr]_{k} & & K_0' \ar[dl]^{k'}\\
& K &
}
 \quad
\xymatrix{
(K\xr{f_0}L_0,A_0) \ar[rr]^{(l_0,a_0)} \ar[dr]_{(l,a)} & & (K\xr{f_0'}L_0',A'_0) \ar[dl]^{(l',a')}\\
& (K\xr{f}L,A) &
}
\]
and this is mapped by $\Phi_{(f,A)}$ to the morphism represented by the diagram
\[
\xymatrix{
(K_0,F(f_0 k)(A_0)) \ar[rr]^{(k_0,F(f_0 k)(a_0))} \ar[dr]_{(f k,F(f_0 k)(a))\ \ \ \ } 
& & (K_0',F(f_0' k')(A_0')) \ar[dl]^{\ \ \ \ \ (f k', F(f_0' k')(a'))}\\
& (L,A). &
}
\]
The element $\lambda_2(\beta)$ is defined analogously using the composite functor
\[ 
\Psi_{(K\xr{f}L,A)} \co (\mathcal K\downarrow K)\times ((K\downarrow p)\downarrow 
(f,A))\to  ((K\downarrow p)\downarrow (f,A))
\to (\mathcal K\ltimes F \downarrow (L,A))
\]
where the first arrow is the projection away from $(\mathcal K\downarrow K)$ and the 
second arrow is induced by $\pi_K$. These functors are related by a natural 
transformation $\Phi\to\Psi$ defined by
\[
\xymatrix{
(K_0,F(f_0 k)(A_0)) \ar[rr]^{(f_0k,\id)} \ar[dr]_{(f k,F(f_0 k)(a))\ \ \ \ } 
& & (L_0,A_0) \ar[dl]^{(l,a)}\\
& (L,A). &
}
\]
This gives rise to a natural homotopy between the induced maps of classifying spaces and thereby to the required homotopy relating $\lambda_1\lambda$ and $\lambda_2$. \qed

\section{Algebraic K-theory of symmetric ring spectra}\label{algebraicKsection}
We recall from \cite{HSS} and \cite{MMSS} that the smash product of symmetric spectra makes $\Sp^{\Sigma}$ a symmetric monoidal category with unit the sphere spectrum $S$. 

\subsection{The spectral category of $A$-modules}\label{spectralsection}
By definition, a symmetric ring spectrum is a monoid in the symmetric monoidal category 
$\Sp^{\Sigma}$. It follows from the universal property of the smash product that a monoid structure on a symmetric spectrum $A$ amounts to a unit $S^0\to A(0)$ and a map of symmetric bispectra
\[
A(m)\wedge A(n)\to A(m+n)
\]
such that the usual diagrams expressing unitality and associativity are commutative; see e.g.\ \cite{Sch}, \cite{Schwede} for details. Similarly, a left module structure of a symmetric ring spectrum $A$ on a symmetric spectrum $E$ amounts to a map of symmetric bi\-spectra
\[
A(m)\wedge E(n)\to E(m+n)
\]
such that the usual module axioms are satisfies. We write $A$-$\mod$ for the topological category of left $A$-modules in which the morphism spaces $\Map_A(E,E')$ are topologized as subspaces of the corresponding morphism spaces $\Map_{\Sp^{\Sigma}}(E,E')$ in 
$\Sp^{\Sigma}$. (Thus, with this definition, $S$-$\mod$ is the same thing as $\Sp^{\Sigma}$).

The symmetric monoidal structure of $\Sp^{\Sigma}$ makes it possible
to talk about \emph{spectral categories}, that is, categories
enriched in symmetric spectra. Such a category $\mathcal C$ is a
class of objects $O\mathcal C$ together with a symmetric spectrum
$\mathcal C(a,b)$ of ``morphisms'' for each pair of objects $a,b$ in
$O\mathcal C$. Furthermore, there is a map of symmetric spectra 
$S\to \mathcal C(a,a)$ for each object $a$ (the unit) and a map of
symmetric spectra
$$
\mathcal C(b,c)\wedge\mathcal C(a,b)\to \mathcal C(a,c)
$$
for each triple of objects $a,b,c$ (the composition). These
structure maps are supposed to satisfy the usual associativity and
unitality axioms for a category. 
A spectral category $\mathcal C$ has an underlying based
topological category with morphism spaces $\mathcal C(a,b)(0)$. 
Given a symmetric ring spectrum $A$, the category $A$-$\mod$ is the underlying
category of a spectral category with morphism spectra denoted $\Hom_A(E,E')$. 
In order to give an explicit description of
the latter, recall the notation $E'[n]$ for the shifted symmetric spectrum from 
Section \ref{symmetricsection}. If $E'$ is an $A$-module then $E'[n]$ inherits an $A$-module
structure defined by
\[
A(h)\wedge E'(n+k)\to E'(h+n+k)\xr{\tau_{h,n}\sqcup 1_k} E'(n+h+k)
\]
and by definition
\[
\Hom_A(E,E')(n)=\Map_A(E,E'[n]).
\]
The structure maps are defined using the $A$-module maps $S^1\wedge E'[n]\to E'[1+n]$ induced by the structure maps of $E'$. We define $\mathcal F_A$ to be the full
subcategory of $A$-$\mod$ containing only the finitely
generated free $A$-modules of the standard form $A^{\vee r}=\bigvee_{i=1}^rA$. For
$n=0$ this is the base object $*$.
The morphism spectra $\Hom_A(A^{\vee r},A^{\vee s})$ in $\mathcal F_A$ may be 
identified with the matrix spectra $M_{s,r}(A)$ from \cite{BHM}, Example 3.2, where
\[
M_{s,r}(A)(n)=\prod_{j=1}^r\bigvee_{i=1}^sA(n).
\]
If we think of this as $s\times r$ matrices with coefficients in $A$
such that each column has at most one non-base point entry, then
composition is given by the usual matrix multiplication.

\subsection{The category $w\mathcal F_A$ of stable equivalences}
\label{stableequivalencessection} 
Let $A$ be a symmetric ring spectrum which we assume to be semistable and well-based in the sense that each of the spaces $A(n)$ has a non-degenerate base point. These are mild conditions on $A$ which allows us to make a simple an explicit construction of the associated algebraic K-theory spectrum $\K(A)$. Most of the symmetric ring spectra that occur in the applications satisfy these conditions and in general any symmetric ring spectrum is stably equivalent to one that is both semistable and well-based.

Recall from \cite{MMSS} that the category $A$-$\mod$ of left $A$-modules has a model category structure in which a map of 
$A$-modules is a weak equivalence if and only if the underlying map of symmetric spectra is a stable equivalence. A fibrant object in this model structure is an $A$-module whose underlying symmetric spectra is an $\Omega$-spectrum. It follows from the discussion in Section  
\ref{spectralsection} that an $A$-module structure on a symmetric spectrum $E$ induces an $A$-module structure on the spectrum $R^{\infty}E$ from Section \ref{symmetricsection}, such that the canonical map $E\to R^{\infty}E$ is a map of $A$-modules. Notice that $A$ being semistable implies that the wedge product $A^{\vee s}$ is semistable as well and that consequently the 
$A$-module $R^{\infty}(A^{\vee s})$ is an $\Omega$-spectrum. Since $A^{\vee r}$ is cofibrant it follows that the mapping spaces
\[
\Map_A(A^{\vee r},R^{\infty}(A^{\vee s}))\cong \prod_{j=1}^rR^{\infty}(A^{\vee s})
\]
represent the ``correct'' homotopy type of the mapping spaces between the objects in 
$\mathcal F_A$. Notice also that an element in this mapping space is a stable equivalence if and only if it induces an isomorphism on $\pi_0$ (the 0th spectrum homotopy group) and that consequently the subspace of stable equivalences is the union of the components that correspond to invertible matrices under the isomorphism
\[
\pi_0\Map_A(A^{\vee r},R^{\infty}(A^{\vee s}))\cong M_{s,r}(\pi_0(A)).
\]
If the ring $\pi_0(A)$ has invariant basis number, then the space of stable equivalences is of course empty unless $r=s$. 
We shall now define a functor $Q_{\I}$ from spectral categories to based topological categories such that when applied to $\mathcal F_A$ we get a topological category $Q_{\I}\mathcal F_A$ whose morphism spaces have the ``correct'' homotopy types described above. 
 Let $\I$ be the category whose objects are the finite sets $\mathbf n=\{1,\dots,n\}$ 
 (including the empty set $\mathbf 0$) and whose morphisms are the injective maps. Given a symmetric spectrum $E$, the sequence of based spaces $\Omega^nE(n)$ defines an  $\I$-diagram in which the morphisms in $\I$ act by conjugation. In detail, if 
 $\alpha\co\mathbf m\to\mathbf n$ is
a morphism in $\mathcal I$, let $\bar\alpha\co \mathbf n=\mathbf
l\sqcup\mathbf m\to\mathbf n$ be the permutation that is order
preserving on the first $l=n-m$ elements and acts as $\alpha$ on the
last $m$ elements. The induced map $\Omega^mE(m)\to\Omega^nE(n)$ takes
an element $f\in \Omega^mE(m)$ to the composition
$$
S^n\xr{\bar\alpha^{-1}}S^l\wedge S^m\xr{S^l\wedge f}S^l\wedge
E(m)\xr{\sigma^l}E(l+m)\xr{\bar\alpha} E(n),
$$
where the $\Sigma_n$-action on $S^n$ is the usual left action. We write $Q_{\mathcal I}(E)$ for the associated based  homotopy colimit
$$
Q_{\mathcal I}(E)=\hocolim_{\mathcal I}\Omega^nE(n),
$$
defined using the topological version of the homotopy colimit functor from \cite{BK}.
This functor is closely related to the functor that maps $E$ to the $0$th space of $R^{\infty}E$.
Indeed, restricting the $\I$-diagram $\mathbf n\mapsto \Omega^nE(n)$ to the subcategory generated by the morphisms $\mathbf n\to \mathbf 1\sqcup\mathbf n$, 
(that is, $i\mapsto 1+i$), we exactly get the diagram defining $R^{\infty}E(0)$.     
\begin{lemma}\label{RQlemma}
If $E$ is semistable, then the canonical map $R^{\infty}E(0)\to Q_{\I}(E)$ is a weak homotopy equivalence.
\end{lemma}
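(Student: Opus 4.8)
The plan is to reduce first to the case where $E$ is an $\Omega$-spectrum, and then to transport the conclusion back along a $\pi_*$-isomorphism. To begin, note that the map is the one induced on homotopy colimits by the inclusion of categories $j\co\{\mathbf 0\to\mathbf 1\to\mathbf 2\to\cdots\}\hookrightarrow\I$, where the source is the linearly ordered subcategory generated by the morphisms $\mathbf n\to\mathbf 1\sqcup\mathbf n$: as explained just before the lemma, restricting the $\I$-diagram $\mathbf n\mapsto\Omega^nE(n)$ along $j$ gives exactly the sequence whose homotopy colimit (telescope) is $R^\infty E(0)$, while the full $\I$-diagram has homotopy colimit $Q_\I(E)$. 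The functor $j$ is \emph{not} homotopy cofinal — the comma categories $(\mathbf n\downarrow j)$ need not be connected — so a hypothesis on $E$ is genuinely required.

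\smallskip\noindent\emph{Step 1 (the case of an $\Omega$-spectrum).} If $E$ is an $\Omega$-spectrum, then every morphism of the $\I$-diagram $\mathbf n\mapsto\Omega^nE(n)$ is a weak homotopy equivalence: permutations act by homeomorphisms, and the ``standard'' morphism $\Omega^mE(m)\to\Omega^nE(n)$ is $\Omega^m$ applied to the adjoint of an iterated structure map, which is a weak equivalence since $E$ is an $\Omega$-spectrum. In particular the morphisms $E(0)=\Omega^0E(\mathbf 0)\to\Omega^nE(n)$ out of the initial object $\mathbf 0$ of $\I$ are weak equivalences, and since $\alpha\circ(\mathbf 0\to\mathbf m)=(\mathbf 0\to\mathbf n)$ for every $\alpha\co\mathbf m\to\mathbf n$ in $\I$, they assemble into a natural transformation from the constant diagram with value $E(0)$ to $\mathbf n\mapsto\Omega^nE(n)$ which is an objectwise weak equivalence; the same is true after restriction along $j$. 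Since $\mathbf 0$ is initial in both categories, both classifying spaces are contractible, so the homotopy colimit of the constant diagram is weakly equivalent to $E(0)$ on each side, compatibly with $j$. Hence $R^\infty E(0)\simeq E(0)\simeq Q_\I(E)$ and the comparison map is a weak equivalence.

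\smallskip\noindent\emph{Step 2 (reduction of the general case).} Let $E$ be semistable and choose an $\Omega$-spectrum $E'$ together with a $\pi_*$-isomorphism $g\co E\to E'$. Naturality of the whole construction in $E$ gives a commutative square whose bottom edge is the comparison map for $E'$, a weak equivalence by Step 1; so it suffices to prove that $g$ induces a weak equivalence $R^\infty E(0)\to R^\infty E'(0)$ and a weak equivalence $Q_\I(E)\to Q_\I(E')$. For the first, by \cite{HSS}, Proposition~5.6.2, both $R^\infty E$ and $R^\infty E'$ are $\Omega$-spectra, and the square formed by $g$, the map $R^\infty g$, and the $\pi_*$-isomorphisms $E\to R^\infty E$ and $E'\to R^\infty E'$ shows that $R^\infty g$ is a $\pi_*$-isomorphism between $\Omega$-spectra, hence a level equivalence; in particular it is a weak equivalence in spectrum degree $0$. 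It therefore remains to show that $Q_\I$ carries $\pi_*$-isomorphisms to weak homotopy equivalences.

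\smallskip This last statement is the heart of the matter, and the one point where the combinatorics of the category $\I$ of finite sets and injections really enters; I expect it to be the main obstacle. I would establish it using the Bousfield--Kan spectral sequence \cite{BK} for the homotopy colimit over $\I$,
\[
E^2_{s,t}=\colim^s_{\mathbf n\in\I}\bigl[\,\pi_t\Omega^nE(n)=\pi_{t+n}E(n)\,\bigr]\ \Longrightarrow\ \pi_{s+t}Q_\I(E),
\]
which is natural in $E$. On the line $s=0$ one uses the standard fact that the $\Sigma_n$-action on $\pi_{t+n}E(n)$ becomes trivial after one further suspension to identify $\colim^0_\I$ of the displayed diagram with the ordinary colimit over the linear subcategory, i.e.\ with the spectrum homotopy group $\pi_tE$; so $g$ induces an isomorphism on the $s=0$ line. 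For $s>0$ one must see that the higher derived colimits over $\I$ of diagrams of this form vanish — this is a manifestation of the ``homotopically filtered'' nature of $\I$, and can also be extracted from Shipley's analysis of the detection functor on symmetric spectra, or from B\"okstedt's approximation lemma. Granting this vanishing, $g$ induces an isomorphism on all of $E^2$, so $Q_\I(g)$ is a $\pi_*$-isomorphism of spaces, hence a weak homotopy equivalence. Combining this with Steps 1 and 2 proves the lemma.
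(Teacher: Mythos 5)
Your overall strategy coincides with the paper's: reduce to the case of an $\Omega$-spectrum along a $\pi_*$-isomorphism, and observe that for an $\Omega$-spectrum all maps of the $\I$-diagram are weak equivalences, so both homotopy colimits are identified with $E(0)$. Step 1 and the $R^{\infty}$ half of Step 2 are correct. The remaining input — that $Q_{\I}$ carries $\pi_*$-isomorphisms to weak homotopy equivalences — is exactly the point the paper does not prove but quotes from \cite{HSS} and \cite{Sh}; you are right that this is the heart of the matter, but the spectral-sequence argument you sketch for it does not work as stated.

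Both pillars of that sketch fail in the generality you assert. First, the ``standard fact'' that the $\Sigma_n$-action on $\pi_{t+n}E(n)$ becomes trivial after a further suspension, so that the $\I$-colimit agrees with the linear colimit $\pi_tE$, is not a fact about arbitrary symmetric spectra: the colimit over $\I$ is the linear colimit of the $\Sigma_n$-coinvariants, and triviality of the induced action on $\pi_tE$ is essentially the semistability condition itself. It fails, for instance, for the free symmetric spectrum on $S^1$ in level $1$, whose naive $\pi_0$ is an infinitely generated free abelian group. Second, the vanishing of the higher derived colimits $\colim^s_{\I}$ is not a formal property of $\I$, and B\"okstedt's approximation lemma does not apply here since the maps of the diagram $\mathbf n\mapsto\Omega^nE(n)$ need not become highly connected for a general $E$. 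Indeed, if both claims held for every symmetric spectrum, the comparison map of the lemma would always be an equivalence and the semistability hypothesis would be superfluous — which it is not. In your application $g\co E\to E'$ does run between semistable spectra, so the first point can be repaired; but the second cannot be sidestepped, because a $\pi_*$-isomorphism induces an isomorphism only of linear colimits, not of the $\I$-diagrams of homotopy groups, so without the vanishing you cannot conclude an isomorphism of $E^2$-terms. The clean fix is the one you mention only as a fallback: take the preservation of $\pi_*$-isomorphisms by $Q_{\I}$ from \cite{Sh}, which is precisely what the paper's proof does.
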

\begin{proof}
It follows from \cite{HSS} and \cite{Sh} that both functors in the lemma take $\pi_*$-~isomorphisms to weak homotopy equivalences. Since $E$ is semistable there exists an 
$\Omega$-spectrum $E'$ and a $\pi_*$-isomorphism $E\to E'$. By naturality it therefore suffices to prove the lemma when $E$ is an $\Omega$-spectrum and in this case the result follows from the fact that the structure maps in the $\I$-diagram defining $Q_{\I}(E)$ are weak homotopy equivalences.
\end{proof}

The advantage of the functor $Q_{\I}\co \Sp^{\Sigma}\to \mathcal T$ is that the symmetric monoidal structure of $\I$ (given by the usual concatenation $\mathbf m\sqcup\mathbf n$ of ordered sets) makes it a (lax) monoidal functor in the sense of 
\cite{MacL}, Section X1.2. Thus, there is a unit $S^0\to Q_{\mathcal I}(S)$ and a natural
multiplication
$$
Q_{\mathcal I}(E)\wedge Q_{\mathcal I}(E')\to Q_{\mathcal I}(E\wedge E')
$$
which is compatible with the coherence
isomorphisms in $\Sp^{\Sigma}$ and $\mathcal T$. The unit is defined by identifying
$S^0$ with $\Omega^0(S^0)$ and including the latter as the $0$th
term in the homotopy colimit. The multiplication is induced by the
natural map of $\mathcal I\times \mathcal I$-diagrams
$$
\Omega^m(E(m))\wedge\Omega^n(E'(n))\to \Omega^{m+n}(E(m)\wedge E'(n))\to
\Omega^{m+n}(E\wedge E'(m+n)),
$$
followed by the map of homotopy colimits induced by the monoidal
structure map $\mathcal I\times \mathcal I\to \mathcal I$. The first
map in the above diagram takes a pair $(f,g)$ to their smash
product. Let now $\mathcal C$ be spectral category as in Section 
\ref{spectralsection} and let $Q_{\I}\mathcal C$ be the based topological category
with the same objects and morphism spaces $Q_{\I}(\mathcal C(a,b))$. The composition 
is induced by the monoidal structure of $Q_{\I}$,
\[
Q_{\I}(\mathcal C(b,c))\wedge Q_{\I}(\mathcal C(a,b))\to 
Q_{\I}(\mathcal C(b,c)\wedge\mathcal C(a,b))\to Q_{\mathcal I}(\mathcal C(a,c) 
\]
and the units are defined by $S^0\to Q_{\I}(S)\to Q_{\I}(\mathcal C(a,a))$. Applying this to the spectral category $\mathcal F_A$ we get the topological category $Q_{\I}\mathcal F_A$ with morphism spaces
\[
Q_{\I}\mathcal F_A(A^{\vee r},A^{\vee s})=Q_{\I}(\Hom_A(A^{\vee r},A^{\vee s})).
\]
Using Lemma \ref{RQlemma} and the fact that $R^{\infty}$ preserves products of semistable 
symmetric spectra up to level equivalence we get a chain of weak homotopy equivalences
\[
Q_{\I}\mathcal F_A(A^{\vee r},A^{\vee s})\xl{\sim} R^{\infty}\Hom_A(A^{\vee r},A^{\vee s})(0)
\xr{\sim}\Map_A(A^{\vee r},R^{\infty}(A^{\vee s}))
\]
which shows that the morphism spaces in $Q_{\I}\mathcal F_A$ have the desired homotopy types. 

\begin{definition}
The category $w \mathcal F_A$ of stable equivalences in $\mathcal
F_A$ is the topological subcategory of $Q_{\mathcal I}\mathcal F_A$
that has the same objects as the latter and in which the morphism
spaces
$$
w\mathcal F_A(A^{\vee r},A^{\vee s})\subseteq 
Q_{\mathcal I}\mathcal F_A(A^{\vee r},A^{\vee s})
$$
are the unions of those components that correspond to invertible matrices in
\[
\pi_0Q_{\mathcal I}\mathcal F_A(A^{\vee r},A^{\vee s})\cong M_{s,r}(\pi_0(A)).
\]
\end{definition}

\subsection{Algebraic K-theory of $\mathcal F_A$}\label{K-theorysection}
Let again $A$ be a symmetric ring spectrum that is well-based and semistable.
We now make the extra assumption that $A$ be connective in the sense that the spectrum homotopy groups vanish in negative degrees. The connectivity assumption is in fact not 
needed for any of the constructions in this section, but we do not claim that our definition of algebraic K-theory is the ``correct'' definition if $A$ is not connective. 

Our construction is based on Segal's $\Gamma$-space
approach \cite{S} to infinite loop spaces which we briefly recall first. 
Let $\Gamma^{\op}$ be the category of finite based sets. Following \cite{BF}, a
$\Gamma$-space is a functor $M\co \Gamma^{\op}\to \mathcal T$
such that $M(*)=*$. Given based sets $X$ and $Y$, the last condition ensures that there is a
natural transformation $X\wedge M(Y)\to M(X\wedge Y)$, defined by applying $M$ to the
based map $y\mapsto (x,y)$ for each $x$ in $X$. Let
$S^n_{\bullet}$ be the $n$-fold smash product of the standard
simplicial circle $\Delta_{\bullet}[1]/\partial
\Delta_{\bullet}[1]$, and let $M(S^n)$ be the realization of the
simplicial space $M(S^n_{\bullet})$ obtained by applying $M$ to
$S^n_{\bullet}$ in each simplicial degree. In the following we shall tacitly assume that the simplicial spaces $M(S^n_{\bullet})$ are \emph{good} in the sense that the degeneracy maps are cofibrations, see \cite{S}, Appendix A. This ensures that the realization is homotopically 
well-behaved. The symmetric spectrum $M(S)$ associated to $M$ has $n$th space $M(S^n)$ and structure maps induced by the simplicial maps
$$
S^1_{\bullet}\wedge M(S^n_{\bullet})\to M(S^1_{\bullet}\wedge
S^n_{\bullet}) =M(S^{n+1}_{\bullet}).
$$
We say that $M$ is \emph{special} if, for each pair of finite based 
sets $X$ and $Y$, the natural map $M(X\vee Y)\to M(X)\times
M(Y)$ is a weak homotopy  equivalence. In this case it follows from \cite{S},
Proposition 1.4, that $M(S)$ is a positive $\Omega$-spectrum. 
We say that $M$ is \emph{very special} if $M(S)$ is
a genuine $\Omega$-spectrum. This is equivalent to $M(S^0)$ being a
grouplike monoid.

Since $\mathcal F_A$ is a category with finite coproducts, Segal's
construction in \cite{S} applies to give a $\Gamma$-category with 
$\mathcal F_A$ as its underlying category. We shall
consider a variant of this where, roughly
speaking, instead of using all sum diagrams of objects in
$\mathcal F_A$, we only consider those that arise from
permutation matrices. Let $\mathcal F$ be the skeleton category
of finite sets with objects $\mathbf n=\{1,\dots, n\}$, including the empty set $\mathbf 0$.
This is a category with coproducts, hence it gives rise to a $\Gamma$-category
$X\mapsto \mathcal F\langle X\rangle$. It is useful to formulate
this construction in terms of sum diagrams in $\mathcal F$. Given
a finite based set $X$, let $\bar X$ be the unbased set obtained
by excluding the base point, and let $\mathcal P(\bar X)$ denote
the category of subsets and inclusions in $\bar X$. An object in
$\mathcal F\langle X\rangle$ may then be identified with a
functor $\theta\co \mathcal P(\bar X)\to\mathcal F$ that takes
disjoint unions to coproducts: if $U$ and $V$ are
disjoint subsets of $\bar X$, then the diagram
$\theta_U\to\theta_{U\cup V}\leftarrow \theta_V$ represents the
middle term as a coproduct in $\mathcal F$. A morphism in
$\mathcal F\langle X\rangle $ is a natural transformation 
of such functors. We now enrich this construction to a topological 
$\Gamma$-category $X\mapsto \mathcal F_A\langle X\rangle$ such that 
the objects of $\mathcal F_A\langle X\rangle$ are the functors 
$A[\theta]\co \mathcal P(\bar X)\to \mathcal F_A$ of the special form 
$A[\theta]=A\wedge \theta_+$ for an object $\theta$ in 
$\mathcal F\langle X\rangle$. The morphism spaces  
$\Map_A(A[\theta],A[\theta'])$ are the spaces 
of natural transformations 
between such functors, equipped with the subspace topology induced from
the product of the mapping spaces $\Map_A(A[\theta_U],A[\theta'_U])$. 
We extend the definition of $\mathcal F_A\langle X\rangle$ to a spectral 
category with morphism spectra defined by
$$
\Hom_A(A[\theta],A[\theta'])(n)=\Map_A(A[\theta],A[n][\theta'])
$$
where the shifted symmetric spectrum $A[n]$ is defined as in Section 
\ref{symmetricsection}. This is equivalent to defining $\Hom_A(A[\theta],A[\theta'])$ as the 
\emph{end} of the $\mathcal P(\bar X)^{\op}\times \mathcal P(\bar X)$-diagram 
$\Hom_A(A[\theta_U],A[\theta'_V])$, see \cite{MacL}, Section IX.5.  
In this way we get a special $\Gamma$-spectral category in the
sense that there are natural equivalences of spectral categories
$$
\mathcal F_A\langle X\vee Y\rangle\xr{\sim}\mathcal F_A\langle
X\rangle \times \mathcal F_A\langle Y\rangle.
$$
The morphism spectra of the spectral category 
$\mathcal F_A\langle X\rangle \times \mathcal F_A\langle Y\rangle$ are the 
products of the morphism spectra of $\mathcal F_A\langle X\rangle$ and 
$\mathcal F_A\langle Y\rangle$. It follows that there are isomorphisms of morphism 
spectra
\[
\Hom_A(A[\theta],A[\theta'])\cong\prod_{x\in \bar X}
\Hom_A(A[\theta_{\{x\}}],A[\theta'_{\{x\}}]).
\]
We now proceed as in Section \ref{stableequivalencessection} and use the
monoidal functor $Q_{\mathcal I}$ to define the  topological $\Gamma$-category
$X\mapsto Q_{\mathcal I}\mathcal F_A\langle X\rangle$ and the
$\Gamma$-subcategory of stable equivalences $X\mapsto w\mathcal
F_A\langle X\rangle$.
Applying the usual classifying space functor we get
the $\Gamma$-space $B(w\mathcal F_A)$. Thus,
$B(w\mathcal F_A)\langle X\rangle$ is the realization of the
simplicial space
with $k$-simplices
$$
\coprod_{\theta_0,\dots,\theta_k}w\mathcal F_A\langle X\rangle (A[\theta_1],A[\theta_0)])
\times\dots\times w\mathcal F_A\langle X\rangle(A[\theta_k],A[\theta_{k-1}]),
$$
where $\theta_0,\dots,\theta_k$ runs through all $(k+1)$-tuples of objects in
$\mathcal F\langle X\rangle$.

\begin{lemma}\label{Kspecialgammalemma}
The $\Gamma$-space $B(w\mathcal F_A)$ is special.
\end{lemma}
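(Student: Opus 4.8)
The plan is to verify Segal's condition directly: for every pair of finite based sets $X$ and $Y$, I want the natural map
\[
B(w\mathcal F_A)\langle X\vee Y\rangle\;\longrightarrow\;B(w\mathcal F_A)\langle X\rangle\times B(w\mathcal F_A)\langle Y\rangle
\]
to be a weak homotopy equivalence. Since the classifying space functor commutes with finite products of topological categories, this map is $B$ applied to the functor of topological categories
\[
w\mathcal F_A\langle X\vee Y\rangle\;\longrightarrow\;w\mathcal F_A\langle X\rangle\times w\mathcal F_A\langle Y\rangle
\]
obtained by restricting a functor on $\mathcal P(\overline{X\vee Y})=\mathcal P(\bar X\amalg\bar Y)$ along the two inclusions $\mathcal P(\bar X)\hookrightarrow\mathcal P(\bar X\amalg\bar Y)$ and $\mathcal P(\bar Y)\hookrightarrow\mathcal P(\bar X\amalg\bar Y)$. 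I would show that this functor is essentially surjective and induces a weak homotopy equivalence on every morphism space; since the nerves involved are good simplicial spaces (the morphism spaces being well-based), $B$ of such a functor is a weak homotopy equivalence.

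Essential surjectivity is the routine half: given coproduct-preserving functors $\theta\co\mathcal P(\bar X)\to\mathcal F$ and $\theta'\co\mathcal P(\bar Y)\to\mathcal F$, the functor $U\amalg V\mapsto\theta_U\amalg\theta'_V$ (formed with chosen coproducts in $\mathcal F$) again carries disjoint unions to coproducts and restricts to $\theta$ and $\theta'$, and the canonical permutation isomorphisms relating these gluings to the original data lie in the subspaces of stable equivalences. For the morphism spaces I would use the isomorphism of morphism spectra $\Hom_A(A[\eta],A[\eta'])\cong\prod_{z\in\bar X\amalg\bar Y}\Hom_A(A[\eta_{\{z\}}],A[\eta'_{\{z\}}])$ recorded in Section \ref{K-theorysection}: under it, the restriction functor becomes (after applying $Q_{\mathcal I}$ and restricting to the relevant unions of path components) the composite of the equivalence $Q_{\mathcal I}$ of the splitting with the canonical comparison map
\[
Q_{\mathcal I}(E_1\times E_2)\;\longrightarrow\;Q_{\mathcal I}(E_1)\times Q_{\mathcal I}(E_2),
\]
where $E_1=\prod_{z\in\bar X}\Hom_A(A[\eta_{\{z\}}],A[\eta'_{\{z\}}])$ and $E_2$ is the analogous product over $\bar Y$. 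So the substance of the argument is that $Q_{\mathcal I}$ preserves this finite product up to weak homotopy equivalence.

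This is where I would invoke Lemma \ref{RQlemma}. Each factor $\Hom_A(A[\eta_{\{z\}}],A[\eta'_{\{z\}}])$ is a finite product of copies of a wedge $A^{\vee s}$, hence semistable, and a finite product of semistable symmetric spectra is again semistable; thus the canonical maps $R^{\infty}(-)(0)\to Q_{\mathcal I}(-)$ are weak homotopy equivalences on each of $E_1$, $E_2$, and $E_1\times E_2$. Since $R^{\infty}$ preserves finite products of semistable symmetric spectra up to level equivalence, the map $R^{\infty}(E_1\times E_2)(0)\to R^{\infty}(E_1)(0)\times R^{\infty}(E_2)(0)$ is a weak homotopy equivalence, and comparing it with $Q_{\mathcal I}(E_1\times E_2)\to Q_{\mathcal I}(E_1)\times Q_{\mathcal I}(E_2)$ along the naturality square of $R^{\infty}(-)(0)\to Q_{\mathcal I}(-)$ with respect to the two projections gives the claim by two-out-of-three. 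I expect this product-preservation step to be the only real obstacle, since it is the one place where the lax (rather than strong) monoidality of $Q_{\mathcal I}$ must be controlled.

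Finally I would check that passing to the subcategories of stable equivalences is compatible with the splitting. A morphism of $\mathcal F_A\langle X\rangle$ is, in matrix language, block diagonal with one block for each element of $\bar X$, and a block-diagonal square matrix over $\pi_0(A)$ is invertible precisely when each of its blocks is; hence under the comparison map above the subspace of stable equivalences in a morphism space of $Q_{\mathcal I}\mathcal F_A\langle X\vee Y\rangle$ corresponds exactly to the product of the subspaces of stable equivalences in the two factors. Therefore the comparison restricts to a weak homotopy equivalence on the morphism spaces of $w\mathcal F_A\langle-\rangle$, the functor $w\mathcal F_A\langle X\vee Y\rangle\to w\mathcal F_A\langle X\rangle\times w\mathcal F_A\langle Y\rangle$ is an equivalence of topological categories, and applying $B$ yields the desired weak homotopy equivalence. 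Hence $B(w\mathcal F_A)$ is special.
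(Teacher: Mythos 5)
Your ingredients are essentially the paper's --- the splitting of the morphism spectra over $\bar X\amalg\bar Y$, the comparison of $Q_{\I}$ with $R^{\infty}(-)(0)$ via Lemma \ref{RQlemma} to control the product-comparison map, and the observation that a block-diagonal matrix over $\pi_0(A)$ is invertible exactly when its blocks are. The gap is in the final step. The functor $w\mathcal F_A\langle X\vee Y\rangle\to w\mathcal F_A\langle X\rangle\times w\mathcal F_A\langle Y\rangle$ is surjective but not injective on objects: an object of $\mathcal F\langle X\vee Y\rangle$ also records the values $\theta_{U\cup V}$ on mixed subsets together with the coproduct structure maps, and the restriction forgets these. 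Consequently the induced map of nerves is \emph{not} a degreewise weak equivalence --- in simplicial degree $k$ the two sides are coproducts indexed by different sets of $(k+1)$-tuples of objects. Goodness of the simplicial spaces only lets you realize a map that is already a degreewise equivalence; it does not upgrade ``essentially surjective and a weak equivalence on morphism spaces'' to ``weak equivalence on classifying spaces.'' That implication is true, but it is a genuine theorem about Dwyer--Kan-type equivalences of topological categories, not a consequence of goodness, and as stated your justification does not establish it.

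The paper repairs exactly this point by factoring the functor through the category of stable equivalences $w(\mathcal F_A\langle X\rangle\times\mathcal F_A\langle Y\rangle)$ associated to the product spectral category. The first functor in the factorization is an honest equivalence of topological categories: it induces \emph{isomorphisms} (not merely weak equivalences) on morphism spaces, because the morphism spectra of $\mathcal F_A\langle X\vee Y\rangle$ split as products indexed over $\bar X\amalg\bar Y$; being an equivalence of categories, it has an inverse up to natural isomorphism and so induces a homotopy equivalence on classifying spaces. The second functor is the \emph{identity on objects} and is a weak equivalence on each morphism space for precisely the reason you give (the comparison $Q_{\I}(E_1\times E_2)\to Q_{\I}(E_1)\times Q_{\I}(E_2)$ for semistable $E_i$, controlled by Lemma \ref{RQlemma} and the product-preservation of $R^{\infty}$). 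Only for this second functor is the map of nerves degreewise a weak equivalence, and only here does goodness (from $A$ being well-based) enter to realize it. If you insert this factorization, the rest of your argument goes through as written.
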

\begin{proof}
Given finite based sets $X$ and $Y$, we must prove that the functor
$$
w\mathcal F_A\langle X\vee Y\rangle\to w\mathcal F_A\langle X\rangle \times w\mathcal F_A\langle Y\rangle
$$
induces an equivalence of classifying spaces. Since $Q_{\mathcal
I}$ commutes with products only up to equivalence and not
isomorphism, this is not quite an equivalence of categories. Let
$$
w(\mathcal F_A\langle X\rangle\times \mathcal F_A\langle
Y\rangle)\subseteq Q_{\mathcal I}(\mathcal F_A\langle X\rangle
\times\mathcal F_A\langle Y\rangle)
$$
be the category of stable equivalences associated to the spectral product
category $\mathcal F_A\langle X\rangle \times\mathcal F_A\langle
Y\rangle$. The above functor may then be factorized as
$$
w\mathcal F_A\langle X\vee Y\rangle\to
w(\mathcal F_A\langle X\rangle\times \mathcal F_A\langle
Y\rangle)\to w\mathcal F_A\langle
X\rangle \times w\mathcal F_A\langle Y\rangle.
$$
Here the first functor is an equivalence of categories since the spectral 
category $\mathcal F_A$ is special. The second functor is the identity
on objects and induces an equivalence on morphism spaces, hence it induces
a degree-wise equivalence of the simplicial spaces defining the bar 
constructions. It follows from the assumption that $A$ be well-based that 
these are good simplicial spaces and the topological realization is therefore also a 
weak homotopy equivalence.
\end{proof}

\begin{definition}
The algebraic K-theory spectrum associated to $w\mathcal F_A$ is
the spectrum
$$
\K(A)=B(w\mathcal F_A)\langle S\rangle.
$$
\end{definition}
It follows from Lemma \ref{Kspecialgammalemma} that this is a positive
$\Omega$-spectrum.

\section{Cyclic algebraic K-theory}\label{cyclicalgebraicKsection}
In this section we consider the cyclic algebraic K-theory spectrum
$\K^{\cy}(A)$. This is a spectrum with cyclotomic structure and we begin by
a general discussion of epicyclic and cyclotomic structures.

\subsection{Epicyclic and cyclotomic
  structures}\label{cyclotomicsection}
We first recall the edgewise subdivision functors from \cite{BHM},
Section 1. Let $\Delta$ be the simplicial category viewed as a
monoidal category under the usual ordered concatenation of ordered sets.
For each positive integer $r$, the $r$-fold concatenation functor
$\sqcup_r\co \Delta\to \Delta$ is defined by
$$
\sqcup_r[k]=\underbrace{[k]\sqcup\dots\sqcup[k]}_r=[r(k+1)-1].
$$
Given a simplicial space $X_{\bullet}$, viewed as a
contravariant functor on $\Delta$, the $r$-fold edgewise subdivision
is the composition $\sd_r X=X\circ\sqcup_r$. Let
$\Delta[k]$ denote the standard $k$-simplex,
$$
\Delta[k]=\{(t_0,\dots,t_k)\in I^{k+1}\co t_0+\dots+t_k=1\}.
$$
The correspondence $[k]\mapsto \Delta[k]$ defines a cosimplicial
space in the usual way and there is a cosimplicial map
$$
D_r\co \Delta[k]\to\Delta[\sqcup_r[k]],\quad v\mapsto
(\textstyle\frac{1}{r}v,\dots,\frac{1}{r}v)
$$
for each $r$. It follows from \cite{BHM}, Lemma 1.1, that the induced map
\begin{equation}\label{D_r}
D_r\co |\sd_rX_{\bullet}|\to|X_{\bullet}|,\quad [x,v]\mapsto [x,D_rv],
\end{equation}
is a homeomorphism for any simplicial space $X_{\bullet}$.
Suppose now that $X_{\bullet}$ is a cyclic space with cyclic operators
$t_k$ acting on $X_k$. By \cite{BHM}, Lemma 1.8 and Lemma 1.11,
$|\sd_rX_{\bullet}|$ and $|X_{\bullet}|$ then come equipped with
actions of the circle group $\mathbb T$ and $D_r$ is $\mathbb
T$-equivariant. Furthermore, $\sd_rX_{\bullet}$ inherits a simplicial action
of the cyclic group $C_r$ by letting the preferred generator act on
$\sd_rX_k$ by
$$
t_{r(k+1)-1}^{k+1}\co X_{r(k+1)-1}\to X_{r(k+1)-1}
$$
and the induced $C_r$ action on the realization agrees with that
obtained by restricting the $\mathbb T$-action. Notice that the
cyclic structure of $X_{\bullet}$ restricts to a cyclic structure on
the fixed points $\sd_rX_{\bullet}^{C_r}$.

\begin{definition}[Goodwillie \cite{G}]
An epicyclic space is a cyclic space $X_{\bullet}$ equipped with a
family of cyclic maps $R_r\co\sd_rX_{\bullet}^{C_r}\to X_{\bullet}$
for $r\geq1$, such that (i) $R_1=\id$ and (ii) the diagrams
$$
\begin{CD}
\sd_r(\sd_sX_{\bullet}^{C_s})^{C_r}@= \sd_{rs}X^{C_{rs}}_{\bullet}\\
@VV \sd_r R_s^{C_r} V @VV R_{rs}V\\
\sd_rX_{\bullet}^{C_r}@> R_r>> X_{\bullet}
\end{CD}
$$
are commutative. An epicyclic spectrum is a cyclic spectrum equipped
with a family of cyclic spectrum maps $R_r$ as above satisfying (i)
and (ii) in each spectrum degree.
\end{definition}

We shall also need the following $\mathbb T$-equivariant analogue. 
Let $\rho_r\co \mathbb T\to\mathbb T/C_r$ be
the homeomorphism $\rho_r(z)=\sqrt[r]z$. Given a $\mathbb T$-space $X$,
we denote by $\rho_r^*X^{C_r}$ the $\mathbb T$-space obtained by pulling
back the $\mathbb T/C_r$-action on $X^{C_r}$ via $\rho_r$.
\begin{definition}
A cyclotomic space is a $\mathbb T$-space $X$ equipped with a
family of $\mathbb T$-equivariant maps $R_r\co\rho^*_rX^{C_r}\to
X$ for $r\geq1$, such that (i) $R_1=\id$ and (ii) the diagrams
$$
\begin{CD}
\rho^*_r(\rho^*_sX^{C_s})^{C_r}@= \rho^*_{rs}X^{C_{rs}}\\
@VV \rho^*_rR_s^{C_r} V @VV R_{rs}V\\
\rho_r^*X^{C_r}@> R_r>> X
\end{CD}
$$
are commutative. A spectrum with cyclotomic structure is a spectrum with $\mathbb T$-action and a family of $\mathbb T$-equivariant maps $R_r$ as above satisfying (i) and (ii) in each spectrum degree. 
\end{definition}

\begin{remark}
A spectrum with cyclotomic structure is not the same as a cyclotomic spectrum in the sense of \cite{HM}. The difference is analogous to the distinction between a spectrum with 
$\mathbb T$-action and a genuine $\mathbb T$-spectrum, see e.g.\ \cite{C}. Thus, forgetting part of the structure, a cyclotomic spectrum as in \cite{HM} 
gives a spectrum with cyclotomic structure in our sense.
\end{remark}

An epicyclic structure on a cyclic space or spectrum $X_{\bullet}$
induces a cyclotomic structure on the topological realization
$|X_{\bullet}|$. This uses that the homeomorphisms $D_r$
restrict to $\mathbb T$-equivariant homeomorphisms
$$
|\sd_rX_{\bullet}^{C_r}|\to\rho_r^*|\sd_rX_{\bullet}|^{C_r}\xr{D_r^{C_r}}
\rho^*_r|X_{\bullet}|^{C_r}
$$
when the domain is given the $\mathbb T$-action induced by the cyclic
structure  of $\sd_rX_{\bullet}^{C_r}$.
The cyclotomic structure maps are then defined by
$$
R_r\co \rho^*_r|X_{\bullet}|^{C_r}\cong
|\sd_rX_{\bullet}^{C_r}|\to |X_{\bullet}|
$$
where the last map is the realization of the epicyclic
structure map. 

Recall the category $\mathcal N$ from Example \ref{Iexample} with objects the natural numbers and morphisms $r\co m\to n$ a natural number $r$ such that $m=nr$. A cyclotomic space $X$ gives rise to an $\mathcal N$-diagram of $\mathbb T$-spaces $n\mapsto \rho_n^*X^{C_n}$ in which the structure maps of the diagram are defined by
\begin{equation}\label{Rstructure}
\rho_m^*X^{C_m}=\rho_n^*(\rho_r^*X^{C_r})^{C_n}
\xr{\rho_n^* R_r^{C_n}}\rho_n^*X^{C_n}.
\end{equation}
We write $RX$ for the associated homotopy limit, 
\begin{equation}\label{Rfunctor}
RX=\holim_{R_r}\rho_n^*X^{C_n}
\end{equation}
(there should be no risk of confusion with the functor $R$ on symmetric spectra considered in Section \ref{symmetricsection}). The cyclotomic structure of $X$ induces a cyclotomic structure on each of the spaces $\rho_n^*X^{C_n}$ such that the structure maps in the diagram are cyclotomic. It follows that the correspondence $X\mapsto RX$ defines an endofunctor on the category of cyclotomic spaces. It is worth noting that $R$ has the structure of a comonad 
(see \cite{MacL}, Section VI.1) with counit $RX\to X$ defined by restricting to the terminal object $1$ in $\mathcal N$ and comultiplication $RX\to RRX$ induced by the functor 
$\mathcal N\times \mathcal N\to\mathcal N$ that takes $(r,s)$ to $rs$.

Let $\mathbb I\ltimes \mathbb T$ be the category introduced as a Grothendieck construction in Example~ \ref{ITexample}. In general, an $\mathbb I\ltimes\mathbb T$-diagram $X$ amounts to a sequence of $\mathbb T$-spaces $X(n)$ for $n\geq 1$, equipped with two families of structure maps
\[
F_r, R_r\co X(nr)\to X(n),
\]
such that the relations (\ref{Icategory}) in the category $\mathbb I$ hold, the maps $R_r$ are 
$\mathbb T$-equivariant, and $F_r(z^rx)=zF_r(x)$ for all $z\in \mathbb T$ and $x\in X(nr)$.
A cyclotomic space $X$ defines an $\mathbb I\ltimes\mathbb T$-diagram 
$n\mapsto \rho_n^*X^{C_n}$ by letting
\[
F_r\co \rho_{rn}^*X^{C_{rn}}\xr{}
\rho^*_nX^{C_n}\]
be the natural subspace inclusion and $R_r$ the map defined in (\ref{Rstructure}).
It follows from Example \ref{ITexample} that there are natural weak homotopy equivalences
\[
\holim_{\mathbb I\ltimes \mathbb T}\rho_n^*X^{C_n}\xr{\sim}
(\holim_{R_r}\rho_n^*X^{C_n})^{h(N\ltimes \mathbb T)}\xr{\sim}
(\holim_{R_r}(\rho_n^*X^{C_n})^{h\mathbb T})^{hN},
\]
where in fact the first map is a homeomorphism. 
\subsection{The cyclic bar construction}\label{cyclicbarsection}
\label{B^cysection} Waldhausen's cyclic bar construction
$B^{\cy}_{\bullet}(\mathcal C)$ of a small topological category
$\mathcal C$ provides a basic example of an epicyclic space. The underlying cyclic 
space has the form
\[
B^{\cy}_{\bullet}(\mathcal C)\co [k]\mapsto\coprod_{c_0,\dots,c_k}
\mathcal C(c_0,c_k)\times\mathcal C(c_1,c_0)\times\dots\times
\mathcal C(c_k,c_{k-1}),
\]
where the coproduct is over all $(k+1)$-tuples of objects in
$\mathcal C$ and the structure maps are of the usual Hochschild
type, see \cite{BHM}, Section 2. A $k$-simplex in
$\sd_rB^{\cy}_{\bullet}(\mathcal C)$ may be represented as a tuple
of morphisms of the form
\begin{equation}\label{sdrep}
\{f_i(j)\co 0\leq i\leq k,\quad 1\leq j\leq r\},
\end{equation}
such that if $\zeta_r$ denotes the generator of $C_r$, then the
$C_r$-action induced by the cyclic structure is given by
$$
\zeta_r\cdot \{f_i(j)\}=\{f'_i(j)\},\quad \text{where } f'_i(j)=
\begin{cases}
f_i(r),&\text{for $j=1$},\\
f_i(j-1),&\text{for $1< j\leq r$}.
\end{cases}
$$
It follows that a $C_r$-fixed point $\{f_i(j)\}$ is constant in the
$j$-coordinate such that the diagonal inclusion defines an isomorphism
\[
\Delta_r\co B^{\cy}_{\bullet}(\mathcal C)\to 
\sd_rB^{\cy}_{\bullet}(\mathcal C)^{C_r}
\]
of cyclic spaces. The epicyclic structure maps are the inverse 
isomorphisms
$$
R_r\co \sd_rB^{\cy}_{\bullet}(\mathcal C)^{C_r}\to
B^{\cy}_{\bullet}(\mathcal C).
$$
Let us write $B^{\cy}(\mathcal C)$ for the realization of 
$B^{\cy}_{\bullet}(\mathcal C)$.  
Since the cyclotomic structure maps of $B^{\cy}(\mathcal C)$ are homeomorphisms, 
it follows that the canonical map from the
categorical limit to the homotopy limit induces a weak homotopy equivalence
\begin{equation}\label{B^cyRequivalence}
B^{\cy}(\mathcal C)\cong\lim_{R_r}\rho^*_nB^{\cy}(\mathcal C)^{C_n}
\to \holim_{R_r}\rho^*_nB^{\cy}(\mathcal C)^{C_n}.
\end{equation}
This is compatible with the actions of the Frobenius operators if we
define the action on $B^{\cy}(\mathcal C)$ by
\[
F_r\co |B^{\cy}_{\bullet}(\mathcal C)|\xr{|\Delta_r|}
|\sd_rB^{\cy}_{\bullet}(\mathcal C)^{C_r}|\xr{} 
|\sd_rB^{\cy}_{\bullet}(\mathcal C)|\xr{D_r}
|B^{\cy}_{\bullet}(\mathcal C)|
\]
where the second map is the inclusion. It is clear from the definition that this fits together with the $\mathbb T$-action to give an $N\ltimes\mathbb T$-action on $B^{\cy}(\mathcal C)$. 
This is the action consider in Theorem \ref{Bcytheorem}.

\subsection{Cyclic algebraic K-theory}\label{cyclicK-theorysection}
Let now $A$ be a connective symmetric ring spectra which is semistable and well-based. Applying the cyclic bar
construction to the $\Gamma$-category $w\mathcal F_A$ introduced in
Section \ref{stableequivalencessection}, we get
the $\Gamma$-space $B^{\cy}(w\mathcal F_A)$. This is a
$\Gamma$-cyclotomic space, that is, a $\Gamma$-object in the category
of cyclotomic spaces.
\begin{definition}
The cyclic algebraic K-theory spectrum of $w\mathcal F_A$ is the
spectrum with cyclotomic structure
$$
\K^{\cy}(A)=B^{\cy}(w\mathcal F_A)\langle S\rangle.
$$
\end{definition}
In general it is not true that a natural transformation of
functors induces a homotopy after applying the cyclic bar
construction. However, as follows from the discussion in 
\cite{DM}, Section 1.5, this is the case for natural \emph{isomorphisms} and in particular 
equivalent categories have equivalent cyclic classifying spaces. Using this, the proof of the
following lemma is analogous to that of Lemma
\ref{Kspecialgammalemma}.
\begin{lemma}
The $\Gamma$-space $B^{\cy}(w\mathcal F_A)$ is special and
$\K^{\cy}(A)$ is a positive $\Omega$-spectrum. \qed
\end{lemma}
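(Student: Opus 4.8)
The plan is to run the proof of Lemma~\ref{Kspecialgammalemma} essentially verbatim, with the classifying space functor $B$ replaced by the cyclic classifying space functor $B^{\cy}$, and to isolate the single place where the cyclic case genuinely differs from the acyclic one. Since $B^{\cy}$ commutes with finite products, there is a natural homeomorphism
\[
B^{\cy}(w\mathcal F_A\langle X\rangle\times w\mathcal F_A\langle Y\rangle)\cong B^{\cy}(w\mathcal F_A\langle X\rangle)\times B^{\cy}(w\mathcal F_A\langle Y\rangle),
\]
so to prove that $B^{\cy}(w\mathcal F_A)$ is special it suffices to show that for each pair of finite based sets $X$ and $Y$ the functor $w\mathcal F_A\langle X\vee Y\rangle\to w\mathcal F_A\langle X\rangle\times w\mathcal F_A\langle Y\rangle$ induces a weak homotopy equivalence after applying $B^{\cy}$. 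Exactly as before I would factor this functor as
\[
w\mathcal F_A\langle X\vee Y\rangle\to w(\mathcal F_A\langle X\rangle\times\mathcal F_A\langle Y\rangle)\to w\mathcal F_A\langle X\rangle\times w\mathcal F_A\langle Y\rangle
\]
and treat the two arrows separately.

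The second arrow is the identity on objects and a weak homotopy equivalence on each morphism space (since $Q_{\mathcal I}$ preserves products only up to equivalence), so it induces a degree-wise weak homotopy equivalence of the simplicial spaces defining the two cyclic bar constructions; as $A$ is well-based these are good simplicial spaces, so the realizations are weakly equivalent, just as in Lemma~\ref{Kspecialgammalemma}. The first arrow is an equivalence of topological categories, because $\mathcal F_A$ is special as a spectral category, so that $\mathcal F_A\langle X\vee Y\rangle\to\mathcal F_A\langle X\rangle\times\mathcal F_A\langle Y\rangle$ is an equivalence of spectral categories which passes to an equivalence between the associated categories of stable equivalences. This is the one point that is not formal in the cyclic setting: $B^{\cy}$ does not turn an arbitrary natural transformation into a homotopy, so one cannot simply invoke ``equivalent categories have homotopy equivalent classifying spaces''. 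Instead one uses that an equivalence of categories comes equipped with a chosen inverse together with natural \emph{isomorphisms} relating the composites to the identities, and that by \cite{DM}, Section~1.5, natural isomorphisms of functors do induce homotopies after applying $B^{\cy}$; this is what makes the first arrow a weak homotopy equivalence on cyclic classifying spaces.

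Combining the two steps shows that $B^{\cy}(w\mathcal F_A)$ is special, and then \cite{S}, Proposition~1.4 (recalled in Section~\ref{K-theorysection}) gives that the associated spectrum $\K^{\cy}(A)=B^{\cy}(w\mathcal F_A)\langle S\rangle$ is a positive $\Omega$-spectrum. I expect the only real subtlety, relative to the acyclic case, to be the handling of the first arrow: one must keep track of the fact that the comparison furnished by specialness of $\mathcal F_A$ is genuinely an equivalence of categories — so that its unit and counit are natural isomorphisms, not merely transformations — in order to apply the homotopy invariance of $B^{\cy}$ under natural isomorphisms from \cite{DM}, Section~1.5. Everything else is routine and parallels the proof of Lemma~\ref{Kspecialgammalemma} line by line.
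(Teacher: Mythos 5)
Your proposal is correct and follows essentially the same route as the paper: the paper's proof consists precisely of the remark that natural isomorphisms (though not general natural transformations) induce homotopies after applying $B^{\cy}$ by \cite{DM}, Section 1.5, so that equivalent categories have equivalent cyclic classifying spaces, after which the argument of Lemma \ref{Kspecialgammalemma} goes through verbatim. You have correctly identified and justified the single non-formal point, namely the treatment of the first arrow in the factorization.
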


We now apply Theorem \ref{Bcytheorem} to the groupoid-like 
$\Gamma$-category $w\mathcal F_A$ and consider the diagram of $\Gamma$-spaces
\[
B^{\cy}(w\mathcal F_A)^{h(N\ltimes\mathbb T)}\xr{\sim} \Map(BN,B(w\mathcal F_A))\xl{}
B(w\mathcal F_A)
\]
where the right hand map is induced by the projection $BN\to *$. We define the $\Gamma$-space $B'(w\mathcal F_A)$ to be the homotopy pullback of this diagram and we write $\K'(A)$ for the associated spectrum. It follows from the definition that $\K'(A)$ is canonically equivalent to 
$\K(A)$ and that there is a natural diagram
\[
\K(A)\xl{\sim} \K'(A)\xr{} K^{\cy}(A)^{h(N\ltimes\mathbb T)}.
\]
The right hand map is in spectrum degree $n$ given by the composition
\[
|B'(w\mathcal F_A\langle S^n_{\bullet}\rangle)|
\to |B(w\mathcal F_A\langle S^n_{\bullet}\rangle)^{h(N\ltimes \mathbb T)}|
\to |B(w\mathcal F_A\langle S^n_{\bullet}\rangle)|^{h(N\ltimes \mathbb T)}.
\]
\section{The cyclotomic trace}\label{cyclotomictracesection}
In this section $A$ denotes a connective symmetric ring spectrum which we 
as usual assume to be semistable and well-based. Applying a construction 
analogous to that of Dundas-McCarthy \cite{D1}, \cite{DM},
we define the topological cyclic homology of the spectral category $\mathcal F_A$ 
and we construct the cyclotomic trace using this model. 
We compare our definitions to the models of topological cyclic homology 
considered  by Goodwillie \cite{G} and Hesselholt-Madsen \cite{HM} at 
the end of the section. 
\subsection{Topological cyclic homology}\label{THsection}
We first introduce some convenient notation. Let
$Q_{\I^{k+1}}$ be the functor that to a $(k+1)$-fold
multi-symmetric spectrum $E$ associates the based homotopy colimit
\[
Q_{\I^{k+1}}(E)=\hocolim_{\I^{k+1}} \Map(S^{n_0}\wedge\dots\wedge
S^{n_k},E(n_0,\dots,n_k)).
\]
The structure maps of the $\I^{k+1}$-diagram on the right hand side
are similar to those for $Q_{\I}$. We define the spectrum homotopy groups of a $(k+1)$-fold multi-symmetric spectrum $E$ by
\[
\pi_i(E)=\colim_{n_0,\dots,n_k}\pi_{i+n_0+\dots+n_k}(E(n_0,\dots,n_k))
\]
and we say that a map of multi-symmetric spectra is a $\pi_*$-isomorphism if it induces an isomorphism on spectrum homotopy groups.  
 Using that homotopy colimits over $\I^{k+1}$ can be 
calculated iteratively, the following lemma follows from an easy inductive argument based on Lemma \ref{RQlemma}.
\begin{lemma}\label{Qmulti-lemma}
Let $E\to E'$ be a $\pi_*$-isomorphism of $(k+1)$-fold multi-symmetric spectra that are semistable in each spectrum variable (keeping the remaining spectrum variables fixed). Then the induced map 
\[
Q_{\I^{k+1}}(E)\to Q_{\I^{k+1}}(E')
\]
is a weak homotopy equivalence.\qed 
\end{lemma}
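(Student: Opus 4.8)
The plan is to argue by induction on $k$. The case $k=0$ is already contained in the proof of Lemma~\ref{RQlemma}: there it is recorded, on the authority of \cite{HSS} and \cite{Sh}, that $Q_{\I}$ carries $\pi_*$-isomorphisms of symmetric spectra to weak homotopy equivalences, and a one-variable multi-symmetric spectrum that is semistable in its spectrum variable is just a semistable symmetric spectrum.

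For the inductive step I would first peel off one spectrum variable, using that a homotopy colimit over $\I^{k+1}=\I\times\I^{k}$ may be formed one factor at a time. Writing $\vec n=(n_1,\dots,n_k)$ and letting $E_{\vec n}$ be the symmetric spectrum $n_0\mapsto\Map(S^{n_1}\wedge\dots\wedge S^{n_k},E(n_0,\vec n))=\Omega^{n_1+\dots+n_k}E(n_0,\vec n)$ in the variable $n_0$, there is a natural homeomorphism
\[
Q_{\I^{k+1}}(E)\;\cong\;\hocolim_{\vec n\in\I^{k}}\,Q_{\I}(E_{\vec n}).
\]
Since $E$ is semistable in the $n_0$-variable and finite loopings preserve semistability, each $E_{\vec n}$ is semistable, so Lemma~\ref{RQlemma} supplies a natural weak homotopy equivalence $(R^{\infty}E_{\vec n})(0)\xr{\sim}Q_{\I}(E_{\vec n})$; forming $\hocolim_{\vec n}$ and invoking homotopy invariance of homotopy colimits, we obtain a natural weak homotopy equivalence
\[
\hocolim_{\vec n\in\I^{k}}(R^{\infty}E_{\vec n})(0)\xr{\sim}Q_{\I^{k+1}}(E).
\]

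The next step is to recognise the source as $Q_{\I^{k}}$ of a $k$-fold multi-symmetric spectrum. Because $R^{\infty}$ is built from a sequential homotopy colimit it commutes, up to natural weak equivalence, with the finite homotopy limit $\Omega^{n_1+\dots+n_k}$; evaluating at spectrum degree $0$ this gives a natural weak homotopy equivalence $(R^{\infty}E_{\vec n})(0)\simeq\Map(S^{n_1}\wedge\dots\wedge S^{n_k},D_E(\vec n))$, where $D_E$ is the $k$-fold multi-symmetric spectrum obtained by stabilising the $n_0$-variable of $E$ and then evaluating at $n_0=0$. Hence $Q_{\I^{k+1}}(E)\simeq Q_{\I^{k}}(D_E)$, naturally in $E$. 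One then checks that $D_E$ inherits the hypotheses of the lemma: in each of its remaining spectrum variables $D_E$ is the sequential homotopy colimit of finite loopings of the corresponding variable of $E$, hence a sequential homotopy colimit of semistable symmetric spectra, and such a colimit is again semistable — it admits a $\pi_*$-isomorphism to the corresponding sequential homotopy colimit of the $\Omega$-spectra $R^{\infty}(\,\cdot\,)$, and a sequential homotopy colimit of $\Omega$-spectra is again an $\Omega$-spectrum. A Fubini computation of spectrum homotopy groups moreover gives a canonical isomorphism $\pi_*(D_E)\cong\pi_*(E)$ compatible with the induced map, so $D_E\to D_{E'}$ is again a $\pi_*$-isomorphism. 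The inductive hypothesis applied to $D_E\to D_{E'}$ then shows that $Q_{\I^{k}}(D_E)\to Q_{\I^{k}}(D_{E'})$, and therefore $Q_{\I^{k+1}}(E)\to Q_{\I^{k+1}}(E')$, is a weak homotopy equivalence. The part requiring care — the main obstacle — is precisely this last bundle of verifications, above all that passing to $D_E$ preserves semistability in each variable; by contrast the formal inputs (the Fubini identity for homotopy colimits, the commutation of finite loopings with sequential homotopy colimits, preservation of semistability under finite loopings, and the naturality bookkeeping) are routine.
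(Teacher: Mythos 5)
Your argument is correct and is precisely the route the paper indicates (the paper offers only the one-line remark that the lemma ``follows from an easy inductive argument based on Lemma~\ref{RQlemma}'' using iterated homotopy colimits): you peel off one spectrum variable via the Fubini identity for $\hocolim_{\I^{k+1}}$, stabilize it with $R^{\infty}$ using Lemma~\ref{RQlemma}, check that semistability in the remaining variables and the $\pi_*$-isomorphism condition persist, and induct. The auxiliary facts you invoke (finite loops and sequential homotopy colimits preserve semistability, $\Omega^m$ commutes with telescopes, homotopy invariance of $\hocolim$) are all standard and correctly deployed, so the proposal is a faithful fleshing-out of the paper's intended proof.
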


Given a family of symmetric spectra 
$E_0,\dots, E_k$, we write $E_0\bar\wedge\dots\bar\wedge E_k$ for the 
$(k+1)$-fold multi-symmetric spectrum defined by
\[
E_0\bar\wedge\dots\bar\wedge E_k(n_0,\dots,n_k)
=E_0(n_0)\wedge\dots\wedge E_k(n_k).
\]
Let $\mathcal C$ be a spectral category as in 
Section \ref{spectralsection} and suppose that $\mathcal C$ is small in the sense that the set of objects form a set. For each $k\geq 0$ we define a $(k+1)$-fold multi-symmetric spectrum 
$V_k[\mathcal C]$ by
\[
V_k[\mathcal C]=\bigvee_{c_0,\dots,c_k}\mathcal C(c_0,c_k)\bar\wedge
\mathcal C(c_1,c_0)\bar\wedge\dots\bar\wedge 
\mathcal C(c_k,c_{k-1})
\]
where the wedge product is over all $(k+1)$-tuples of objects in $\mathcal C$. It is clear 
from the definition that letting $\mathcal C$ vary we get a functor 
$V_k[-]$ from small spectral categories to 
$(k+1)$-multi-symmetric spectra. Applying this to the $\Gamma$-category 
$X\mapsto \mathcal F_A\langle X\rangle $ from Section \ref{K-theorysection} we therefore get 
a $\Gamma$-object $X\mapsto V_k[\mathcal F_A\langle X\rangle]$ in the category of 
$(k+1)$-multi-symmetric spectra. Explicitly, with notation as in Section \ref{K-theorysection},
\[
V_k[\mathcal F_A\langle X\rangle]=\bigvee_{\theta_0,\dots,\theta_k}
\Hom_A(A[\theta_0],A[\theta_k]))\bar\wedge\dots
\bar\wedge\Hom_A(A[\theta_k],A[\theta_{k-1}]),
\]
where $\theta_0,\dots,\theta_k$ runs through all $(k+1)$-tuples of objects
in $\mathcal F\langle X\rangle$.
We define $X\mapsto \TH(\mathcal F_A\langle X\rangle)$ to be
the realization of the $\Gamma$-epicyclic space that to a based set
$X$ associates the epicyclic space
$$
\TH_{\bullet}(\mathcal F_A\langle X\rangle)\co
[k]\mapsto Q_{\I^{k+1}}(V_k[\mathcal F_A\langle X\rangle]).
$$
The cyclic structure is defined as in \cite{DM}, Section 1.3, and the
epicyclic structure maps
$$
R_r\co \sd_r\TH_{\bullet}(\mathcal F_A\langle X\rangle)^{C_r}\to
\TH_{\bullet}(\mathcal F_A\langle X\rangle)
$$
are defined as in \cite{DM}, Section 1.5.
\begin{definition}
The topological Hochschild homology spectrum $\TH(\mathcal F_A)$
is the realization of the associated epicyclic spectrum
$\TH_{\bullet}(\mathcal F_A\langle S\rangle)$.
\end{definition}
It follows from the discussion in Section \ref{cyclotomicsection} that $\TH(\mathcal F_A)$ is a spectrum with cyclotomic structure.
With notation as in that section, let $\TR(\mathcal F_A)$ be the homotopy limit of the 
$\mathcal N$-diagram $n\mapsto \rho_n^*\TH(\mathcal F_A)^{C_n}$ defined by the restriction maps $R_r$.  
\begin{definition}
The topological cyclic homology spectrum $\TTC(\mathcal F_A)$ is defined by
\[
\TTC(\mathcal F_A)=\holim_{\mathbb I\ltimes \mathbb T}\rho_n^*\TH(\mathcal F_A)^{C_n}
=\TR(\mathcal F_A)^{h(N\ltimes\mathbb T)}.
\]
\end{definition}
The other variants of topological cyclic homology are defined analogously using the subcategories in (\ref{subcategories}).  
We show that $\TH(\mathcal F_A)$ as well as the fixed point
spectra $\TH(\mathcal F_A)^{C_n}$ are $\Omega$-spectra in
Proposition \ref{omegabispectrumproposition} below. It follows that also
$\TR(\mathcal F_A)$, $\TTC(\mathcal F_A)$, and the other variants of topological cyclic homology are $\Omega$-spectra.

\subsection{The cyclotomic trace}\label{tracesection}
The construction of the cyclotomic trace is based on a map of
$\Gamma$-epicyclic spaces
$$
B^{\cy}_{\bullet}(w\mathcal F_A\langle X\rangle)
\to\TH_{\bullet}(\mathcal F_A\langle X\rangle).
$$
Recall that the space of $k$-simplices in 
$B^{\cy}_{\bullet}(w\mathcal F_A)\langle X\rangle$ is defined by
$$
\coprod_{\theta_0,\dots,\theta_k}
w\mathcal F_A(A[\theta_0],A[\theta_k]))\times\dots \times 
w\mathcal F_A(A[\theta_k],A[\theta_{k-1}]),
$$
where $\theta_0,\dots,\theta_k$ runs through all $(k+1)$-tuples of objects in 
$\mathcal F\langle X\rangle$. The restriction of the above map
to the component indexed by a fixed $(k+1)$-tuple
$\theta_0,\dots,\theta_k$  is defined by the composition
\begin{align*}
&w\mathcal F_A(A[\theta_0],A[\theta_k])\times \dots\times 
w\mathcal F_A(A[\theta_k],A[\theta_{k-1}])\\
&\to Q_{\I}(\Hom_A(A[\theta_0],A[\theta_k]))\wedge\dots\wedge Q_{\mathcal I}
(\Hom_A(A[\theta_k],A[\theta_{k-1}]))\\
&\to Q_{\mathcal I^{k+1}}
(\Hom_A(A[\theta_0],A[\theta_k])\bar\wedge\dots\bar\wedge
\Hom_A(A[\theta_k],A[\theta_{k-1}]))\\
&\to Q_{\mathcal I^{k+1}}(V_k[\mathcal F_A\langle X\rangle]).
\end{align*}
Here the first map is the inclusion of the stable equivalences in the full morphism 
spaces, the second map is the map of homotopy colimits induced by the
natural transformation that takes a $(k+1)$-tuple of maps to their
smash product, and the last map is induced by the inclusion of the wedge
summand in $V_k[\mathcal F_A\langle X\rangle]$ indexed by $\theta_0,\dots,\theta_k$. 
There results a map of epicyclic spectra and, after topological
realization, a map of spectra with cyclotomic structure
$
\K^{\cy}(A)\to\TH(\mathcal F_A).
$
Passing to the homotopy limits over the restriction maps and composing with  
the equivalence induced by (\ref{B^cyRequivalence}), we get a map
of spectra with cyclotomic structure
$$
\K^{\cy}(A)\stackrel{\sim}{\to}\holim_{R_r}\rho_n^*\K^{\cy}(A)^{C_n}\to 
\holim_{R_r}\rho_n^*\TH(\mathcal F_A)^{C_n}=\TR(\mathcal F_A).
$$
The cyclotomic trace is obtained from this by evaluating the $N\ltimes \mathbb T$-homotopy fixed points and composing with the chain of maps constructed in Section 
\ref{cyclicK-theorysection}. 

\begin{definition}
The cyclotomic trace is the chain of maps
represented by the following diagram of spectra
$$
\trc\co\K(A)\xl{\sim}\K'(A)\xr{}\K^{\cy}(A)^{h(N\ltimes\mathbb T)}\to \TR(\mathcal F_A)^{h(N\ltimes\mathbb T)}=\TTC(\mathcal F_A).
$$
\end{definition}
\subsection{The topological cyclic homology spectrum $\TTC(A)$}
\label{HMsection}
Following Dundas-McCarthy, we now relate the above constructions to
the models $\TTC(A)$ and $\TC(A)$ of topological cyclic homology considered 
by Goodwillie \cite{G} and 
Hesselholt-Madsen \cite{HM}. These versions are based on B\"okstedt's definition 
\cite{B}, \cite{Sh} of the topological Hoch\-schild homology spectrum $\TH(A)$. The latter is the
realization of the epicyclic symmetric spectrum whose $n$th space is given by
$$
\TH_{\bullet}(A,n)\co[k]\mapsto Q_{\I^{k+1}}(S^n\wedge
A^{\bar\wedge(k+1)}).
$$
Here we view $A^{\bar\wedge(k+1)}$ as a $(k+1)$-fold multi-symmetric
spectrum in the usual way and the epicyclic structure maps are defined
as for $\TH_{\bullet}(\mathcal F_A)$. Writing $\TR(A)$ for the
homotopy limit of the fixed point spectra $\rho_n^*\TH(A)^{C_n}$ over the restriction maps, 
the spectrum $\TTC(A)$ is defined by
\[
\TTC(A)
=\holim_{\mathbb I\ltimes\mathbb T}\rho_n^*\TH(A)^{C_n}
=\TR(A)^{h(N \ltimes\mathbb T)}.
\]
In order to relate this definition to that based on the cyclotomic
spectrum $\TH(\mathcal F_A)$, we extend the definition of the
latter to give a bispectrum. Consider for each $n$ the 
$\Gamma$-epicyclic space
$$
\TH_{\bullet}(\mathcal F_A\langle-\rangle ,n)\co
(X,[k])\mapsto Q_{\I^{k+1}}(S^n\wedge V_k
[\mathcal F_A\langle X\rangle] )
$$
with structure maps similar to those for $\TH(\mathcal F_A)$.
Evaluating this $\Gamma$-space on the sphere spectrum $S$ in the usual way
we get a bispectrum, also denoted $\TH(\mathcal F_A)$, that in 
bidegree $(m,n)$ takes the value 
$\TH(\mathcal F_A\langle S^m\rangle,n)$.

\begin{proposition}\label{omegabispectrumproposition}
The bispectrum $\TH(\mathcal F_A)$ and its fixed point spectra
$\TH(\mathcal F_A)^{C_r}$ are $\Omega$-bispectra.
\end{proposition}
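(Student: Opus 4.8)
The plan is to show that $\TH(\mathcal F_A)$ is a positive $\Omega$-bispectrum by analyzing the two spectrum directions separately and then to deduce the statement about the fixed point spectra. Since the $\Gamma$-space $X\mapsto B^{\cy}(w\mathcal F_A)\langle X\rangle$ is special (the cyclic-bar analogue of Lemma~\ref{Kspecialgammalemma}), and the analogous argument applies to the $\Gamma$-epicyclic space $X\mapsto \TH_\bullet(\mathcal F_A\langle X\rangle,n)$, the $\Gamma$-space direction is handled by Segal's Proposition~1.4 from~\cite{S}: one checks that $X\mapsto \TH(\mathcal F_A\langle X\rangle,n)$ is special, using that $Q_{\I^{k+1}}$ commutes with finite products up to weak equivalence (this is where Lemma~\ref{Qmulti-lemma} enters, together with the fact that the spectral category $\mathcal F_A$ is special so that $V_k[\mathcal F_A\langle X\vee Y\rangle]$ splits as a wedge whose $Q_{\I^{k+1}}$-value maps by an equivalence to the product) and that realization of good simplicial spaces preserves the relevant equivalences. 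This gives that the $m$-direction structure maps $\TH(\mathcal F_A\langle S^m\rangle,n)\to \Omega\,\TH(\mathcal F_A\langle S^{m+1}\rangle,n)$ are equivalences for $m\ge 1$.

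For the $n$-direction, the structure map $S^1\wedge \TH(\mathcal F_A\langle S^m\rangle,n)\to \TH(\mathcal F_A\langle S^m\rangle,n+1)$ only touches the extra $S^n$-coordinate, so by the same reasoning used for B\"okstedt's original $\TH(A)$ one reduces to a connectivity estimate. Concretely, one fixes the simplicial degree $k$ and inspects the map
\[
S^1\wedge Q_{\I^{k+1}}(S^n\wedge V_k[\mathcal F_A\langle S^m\rangle])\to Q_{\I^{k+1}}(S^{n+1}\wedge V_k[\mathcal F_A\langle S^m\rangle]).
\]
Because the multi-symmetric spectra $V_k[\mathcal F_A\langle S^m\rangle]$ are semistable in each spectrum variable (their entries are matrix spectra on the semistable well-based $A$), the functor $Q_{\I^{k+1}}$ computes the correct derived smash and the above map is $(2n+c)$-connected for a constant $c$ depending on $m,k$ (B\"okstedt's approximation lemma). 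This forces the adjoint structure maps to become highly connected as $n\to\infty$, and after realizing the simplicial direction the resulting map $\TH(\mathcal F_A)(m,n)\to \Omega\,\TH(\mathcal F_A)(m,n+1)$ is a weak equivalence for $n\ge 1$; combined with semistability one gets an honest $\Omega$-bispectrum.

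Finally, for the fixed point spectra $\TH(\mathcal F_A)^{C_r}$ one uses the edgewise subdivision: $\sd_r\TH_\bullet(\mathcal F_A\langle S^m\rangle,n)$ has the same realization as $\TH(\mathcal F_A\langle S^m\rangle,n)$ via the homeomorphism $D_r$ of~(\ref{D_r}), and passing to $C_r$-fixed points commutes with realization of a (good) simplicial space and with finite smash products, so it suffices to identify $(\sd_r\TH_\bullet)^{C_r}$ levelwise. A $C_r$-fixed $k$-simplex is, using the description~(\ref{sdrep}) of $\sd_r$-simplices and the $C_r$-action, a tuple constant in the $j$-coordinate together with a compatible family of identifications of the $\I$-coordinates; concretely $(\sd_r\TH_\bullet(\mathcal F_A\langle S^m\rangle,n))^{C_r}$ in degree $k$ is $Q_{\I^{k+1}}$ of a wedge of $r$-fold smash powers of $\mathcal C(c_{i-1},c_i)$-spectra, which is again semistable in each variable and to which the argument of the previous two paragraphs applies verbatim. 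The main obstacle is the $n$-direction connectivity estimate and checking that taking $C_r$-fixed points does not destroy semistability or the good-simplicial-space hypotheses; this is essentially B\"okstedt's approximation lemma adapted to the multi-object setting, and it is carried out exactly as in~\cite{DM}, so I would cite~\cite{DM} for the technical connectivity input and concentrate the written proof on the reductions to it.
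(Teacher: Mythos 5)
Your overall strategy matches the paper's: treat the two spectrum directions separately, use B\"okstedt's approximation lemma (after reducing, via semistability and Lemma~\ref{Qmulti-lemma}, to connectivity estimates coming from $A$ connective) in the $n$-direction, use specialness of the $\Gamma$-space in the $m$-direction, and handle fixed points by edgewise subdivision with the equivariant input deferred to \cite{DM}. However, there is a genuine gap in the $m$-direction. Specialness of $X\mapsto \TH(\mathcal F_A\langle X\rangle,n)^{C_r}$ only yields a \emph{positive} $\Omega$-spectrum --- Segal's Proposition~1.4 gives the adjoint structure maps as equivalences for $m\geq 1$ only --- while the proposition asserts a genuine $\Omega$-bispectrum. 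You need the $\Gamma$-space to be \emph{very} special, i.e.\ the degree-zero space to be grouplike, and your closing appeal to ``semistability'' does not supply this. The paper closes the gap by noting that the $n$-direction deloopings exhibit $\TH(\mathcal F_A\langle S^0\rangle,0)^{C_r}$ as a loop space, hence grouplike; and for that one needs the $n$-direction adjoint structure maps to be equivalences already at $n=0$ (which they are, by connectivity of $A$), so your restriction to $n\geq 1$ in the $n$-direction would block exactly the step that repairs the $m$-direction.

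A second, smaller problem concerns the fixed points. Your identification of $(\sd_r\TH_{\bullet}(\mathcal F_A\langle S^m\rangle,n))^{C_r}$ in simplicial degree $k$ as ``$Q_{\I^{k+1}}$ of a wedge of $r$-fold smash powers'' is not correct as stated: the $C_r$-action permutes the $r(k+1)$ factors of $\I^{r(k+1)}$ and the smash factors of the source spheres simultaneously, so the fixed points are built from $C_r$-\emph{equivariant} mapping spaces indexed over the diagonal copy of $\I^{k+1}$, and the non-equivariant connectivity argument does not apply ``verbatim''; one genuinely needs the equivariant approximation lemma as in \cite{DM}, Lemma~1.6.11 (which you do cite, so this is repairable, but the written reduction is wrong). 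Relatedly, specialness of the fixed-point $\Gamma$-spaces requires that $\TH$ take equivalences and products of spectral categories to maps that are equivalences on all $C_r$-fixed points (\cite{DM}, Propositions~1.6.6 and 1.6.15), not merely to weak equivalences of underlying spectra.
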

\begin{proof}
The condition that $A$ be semistable implies that it is $\pi_*$-isomorphic to a 
symmetric ring spectrum which is an $\Omega$-spectrum. Thus, using Lemma 
\ref{Qmulti-lemma}, we may assume without loss of generality that $A$ is an $\Omega$-spectrum. The connectivity assumption on $A$ then implies that the $n$th space $A(n)$ is $(n-1)$-connected and  that the structure maps $S^1\wedge A(n)\to A(n+1)$ are $2n$-connected. It follows that the $\I^{k+1}$-diagram giving rise to $\TH_{k}(\mathcal F_A\langle X\rangle,n)$ satisfies the connectivity assumptions required for B\"okstedt's approximation lemma for homotopy colimits, see \cite{M}, Lemma 2.3.7.
Using the notation above, we first keep $m$ fixed and claim that the
adjoint structure maps in the $n$-variable are equivalences. By
definition, the $(m,n)$th space is the realization
of the bisimplicial space $\TH_{\bullet}(\mathcal F_A\langle
S^m_{\bullet}\rangle,n)$ and the adjoint structure maps are defined by
the compositions
$$
|\TH_{\bullet}(\mathcal F_A\langle S^m_{\bullet}\rangle,n)|\to
|\Omega\TH_{\bullet}(\mathcal F_A\langle S^m_{\bullet}\rangle,n+1)|
\to
\Omega|\TH_{\bullet}(\mathcal F_A\langle S^m_{\bullet}\rangle,n+1)|.
$$
It follows from B\"okstedt's approximation lemma and the 
connectivity assumptions on $A$ that the adjoint structure maps
$$
\TH_{\bullet}(\mathcal F_A\langle S^m_{\bullet}\rangle,n)\to
\Omega\TH_{\bullet}(\mathcal F_A\langle S^m_{\bullet}\rangle,n+1)
$$
are equivalences in each bidegree and the realization is therefore
also a weak homotopy equivalence. Since $\Omega$ commutes 
with realization up to
equivalence for good simplicial connected spaces by \cite{May},
Theorem 12.3, the second map is a weak homotopy equivalence as well. 
In order to get the same conclusions for the fixed point spectra we
use the edgewise subdivision functor as in the proof of \cite{DM},
Lemma 1.6.11, and apply a similar argument. Next we keep $n$ fixed
and claim that the $\Gamma$-spaces $\TH(\mathcal F_A\langle -\rangle,n)^{C_r}$ are
special in the sense that the composite map
\[
\xymatrix{
\TH(\mathcal F_A\langle X\vee Y\rangle,n)^{C_r}
\ar[r]\ar[dr] &  
\TH(\mathcal F_A\langle X\rangle\times \mathcal F_A\langle
Y\rangle,n)^{C_r} \ar[d] \\
& \TH(\mathcal F_A\langle X\rangle,n)^{C_r}\times
\TH(\mathcal F_A\langle Y\rangle,n)^{C_r} 
}
\]
is a weak homotopy equivalence for each pair of finite based sets $X$ and $Y$. 
Here the middle term denotes the effect of applying the construction from Section 
\ref{THsection} to the spectral category 
$\mathcal F_A\langle X\rangle\times \mathcal F_A\langle Y\rangle$. 
Since the horizontal map is induced by an equivalence of spectral categories it is 
a weak homotopy equivalence by \cite{DM}, Proposition
1.6.6, and the vertical map is a weak homotopy equivalence since topological Hochschild 
homology preserves products of spectral categories up to equivalence by  
\cite{DM}, Proposition 1.6.15. It follows from the 
first part of the proof that  $\TH(\mathcal F_A\langle -\rangle,n)^{C_r}$
is in fact a very special $\Gamma$-space and the associated
spectrum is therefore an $\Omega$-spectrum.
\end{proof}

We write $\TH(\mathcal F_A)$ and $\TH'(\mathcal F_A)$ for the two symmetric spectra
obtained respectively by restricting to bidegrees $(n,0)$ and $(0,n)$. Thus, 
$\TH(\mathcal F_A)$ retains its meaning from Section \ref{THsection}.
Letting $\mathfrak C$ denote the family of finite cyclic subgroups of $\mathbb T$, we say that a map of ($\Omega$-)spectra with $\mathbb T$-action is a
 \emph{$\mathfrak C$-equivalence} if the induced maps of fixed point spectra are equivalences for all finite cyclic subgroups.  
It follows from Proposition \ref{omegabispectrumproposition} that
$\TH(\mathcal F_A)$ and $\TH'(\mathcal F_A)$ are related by the following explicit 
chain of $\mathfrak C$-equivalences
\begin{align*}
\TH(\mathcal F_A\langle S^n\rangle,0)&\stackrel{\sim}{\to}
\hocolim_{l,m}\Omega^{l+m}\TH(\mathcal F_A\langle S^{l+n}\rangle,m)\\
&\stackrel{\sim}{\leftarrow}
\hocolim_{l,m}\Omega^{l+m}(S^n\wedge\TH(\mathcal F_A\langle S^l\rangle,m))\\
&\stackrel{\sim}{\to}
\hocolim_{l,m}\Omega^{l+m}\TH(\mathcal F_A\langle S^l\rangle,n+m)
\stackrel{\sim}{\leftarrow}
\TH(\mathcal F_A\langle S^0\rangle,n).
\end{align*}
Passing to homotopy limits over the associated fixed point spectra
we therefore get a chain of level equivalences
\[
\TTC(\mathcal F_A)\simeq\TTC'(\mathcal F_A).
\] 
Let now $\mathcal F_A(1)$ be the full spectral 
subcategory of $\mathcal F_A$ containing only the rank 1 module $A$ itself. Identifying $\TH(A)$ with $\TH(\mathcal F_A(1))$ we get a canonical map $\TH(A)\to \TH'(\mathcal F_A)$ of spectra with cyclotomic structure. 

\begin{proposition}[\cite{DM}]\label{Moritaprop}
The map $\TH(A)\to\TH'(\mathcal F_A)$ is a level-wise $\mathfrak
C$-equivalence, hence gives rise to a level-wise equivalence
$\TTC(A)\stackrel{\sim}{\to}\TTC'(\mathcal F_A)$ and similarly for the other variants of topological cyclic homology.
\end{proposition}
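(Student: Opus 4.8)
The plan is to identify the map $\TH(A)\to\TH'(\mathcal F_A)$ with the map on topological Hochschild homology induced by the inclusion of spectral categories $\mathcal F_A(1)\hookrightarrow\mathcal F_A$, and then to apply the Morita invariance of $\THH$ from \cite{DM}. The identification is essentially already made in the text preceding the proposition: since $\mathcal F_A\langle S^0\rangle$ is canonically isomorphic to $\mathcal F_A$, the symmetric spectrum $\TH'(\mathcal F_A)$ is the realization of the epicyclic spectrum whose $n$th space is $[k]\mapsto Q_{\I^{k+1}}(S^n\wedge V_k[\mathcal F_A])$, while $\TH(A)$ is the realization of the one with $n$th space $[k]\mapsto Q_{\I^{k+1}}(S^n\wedge V_k[\mathcal F_A(1)])$, using that $A^{\bar\wedge(k+1)}=V_k[\mathcal F_A(1)]$. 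Under these identifications the comparison map is $Q_{\I^{k+1}}$ applied degreewise and levelwise to the inclusion $V_k[\mathcal F_A(1)]\to V_k[\mathcal F_A]$, and it is a map of epicyclic spectra because $\mathcal F_A(1)\hookrightarrow\mathcal F_A$ is a functor of spectral categories.

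Next I would invoke Morita invariance. As in the proof of Proposition~\ref{omegabispectrumproposition}, one first reduces to the case where $A$ is an $\Omega$-spectrum, using Lemma~\ref{Qmulti-lemma} and naturality, so that the morphism spectra $M_{s,r}(A)$ appearing in $\mathcal F_A$ are $\Omega$-spectra. Now every object $A^{\vee r}$ of $\mathcal F_A$ is the $r$-fold coproduct of the single object $A$ of $\mathcal F_A(1)$ — in particular every object of $\mathcal F_A$ is a retract of a finite sum of objects of $\mathcal F_A(1)$, which is precisely the hypothesis under which \cite{DM} prove that the induced map of topological Hochschild homology spectra is a stable equivalence (the cyclic nerve of the larger spectral category is exhibited as a deformation retract of that of the smaller one by means of the matrix idempotents). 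Since that retraction only manipulates the morphism spectra, it is compatible with the edgewise subdivision functors $\sd_r$ and with passage to $C_r$-fixed points, exactly the kind of fixed-point bookkeeping carried out in \cite{DM}, Lemma~1.6.11, and reprised in the proof of Proposition~\ref{omegabispectrumproposition}. Smashing with $S^n$ and realizing preserves the resulting equivalences, all the simplicial spaces involved being good. Hence $\TH(A)\to\TH'(\mathcal F_A)$ is a levelwise $\mathfrak C$-equivalence.

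Finally I would deduce the statement about topological cyclic homology. By Proposition~\ref{omegabispectrumproposition}, and by the corresponding classical fact for B\"okstedt's model, $\TH(A)$ and $\TH'(\mathcal F_A)$, as well as all their fixed point spectra $\rho_n^*(-)^{C_n}$, are $\Omega$-spectra; so a levelwise $\mathfrak C$-equivalence between them induces level equivalences $\rho_n^*\TH(A)^{C_n}\xr{\sim}\rho_n^*\TH'(\mathcal F_A)^{C_n}$ for every $n\geq 1$, compatibly with the structure maps $F_r$ and $R_r$. Since homotopy limits of diagrams of positive $\Omega$-spectra preserve level equivalences, passing to the homotopy limits over $\mathcal N$, over $\mathbb I\ltimes\mathbb T$, and over the subcategories in~(\ref{subcategories}) yields the asserted level equivalences $\TTC(A)\xr{\sim}\TTC'(\mathcal F_A)$, together with $\TR(A)\xr{\sim}\TR'(\mathcal F_A)$, $\TC(A)\xr{\sim}\TC'(\mathcal F_A)$, $\TTC(A,p)\xr{\sim}\TTC'(\mathcal F_A,p)$, and so on for the remaining variants.

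The step with genuine content — and hence the main obstacle — is the middle one: checking that the Morita deformation retraction of \cite{DM} goes through verbatim in the present point-set model, that is, that it is compatible with the $Q_{\I^{k+1}}$-construction, with the extra $\Gamma$-space coordinate used to build the bispectrum (so that the $\mathfrak C$-equivalence is levelwise in that coordinate as well), and with the edgewise subdivision describing the $C_r$-fixed points. No new ideas are needed — this is the same translation between models performed in \cite{DM}, Section~1.6, and in the proof of Proposition~\ref{omegabispectrumproposition} — but it is where the verification actually takes place.
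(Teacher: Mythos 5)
Your proposal is correct and follows the paper's strategy: identify the comparison map with the one induced by the inclusion of spectral categories $\mathcal F_A(1)\hookrightarrow\mathcal F_A$, invoke the cofinality/Morita results of \cite{DM}, and then pass to fixed points and homotopy limits of $\Omega$-spectra exactly as you describe. The only substantive difference is in how the appeal to \cite{DM} is organized. Where you cite a single ``every object is a retract of a finite sum'' hypothesis, the paper factors the argument: it first writes $\TH'(\mathcal F_A)$ as $\hocolim_n\TH'(\mathcal F_A(n))$ over the finite full subcategories on the $A^{\vee r}$ with $r\leq n$; it then compares $\TH'(\mathcal F_A(n))$ with B\"okstedt's $\TH$ of the matrix ring spectrum $M_n(A)=\Hom_A(A^{\vee n},A^{\vee n})$ by $\TH$-cofinality (\cite{DM}, Lemma 2.1.1), every object of $\mathcal F_A(n)$ being a retract of the single object $A^{\vee n}$; and it finishes with Morita invariance for $\TH(A)\to\TH(M_n(A))$ (\cite{DM}, Proposition 1.6.18) together with a two-out-of-three argument in a commutative square whose bottom arrow $\TH'_{\bullet}(\mathcal F_A(n))\to\TH_{\bullet}(M_n(A))$ --- collapsing the complementary wedge summands --- is only \emph{precyclic} (it commutes with face and cyclic operators but not with degeneracies). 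That precyclic homotopy-inverse device is the concrete mechanism behind your ``deformation retract by matrix idempotents'' gloss, and it is what reduces the compatibility with $\sd_r$ and $C_r$-fixed points that you rightly flag as the main obstacle to the bookkeeping already carried out in \cite{DM}.
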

\begin{proof}
Let $\mathcal F_A(n)$ be the full spectral subcategory of $\mathcal F_A$ containing the free 
$A$-modules $A^{\vee r}$ with $r\leq n$, and let $\TH'(\mathcal F_A(n))$ be the realization   
of the associated epicyclic spectrum
$$
\TH'_{\bullet}(\mathcal F_A(n),m)\co[k]\mapsto
Q_{\I^{k+1}}(S^m\wedge V_k[\mathcal F_A(n)]).
$$
The inclusions $\mathcal F_A(n)\to
\mathcal F_A$ give rise to a $\mathfrak C$-equivalence
$$
\hocolim_n\TH'(\mathcal F_A(n))\stackrel{\sim}{\to}
\TH'(\mathcal F_A),
$$
hence it suffices to show that the inclusion of $\mathcal F_A(1)$ in
$\mathcal F_A(n)$ induces a $\mathfrak C$-equivalence for all $n$.
Writing $M_n(A)$ for the symmetric ring spectrum $\Hom_A(A^{\vee
  n},A^{\vee n})$, it follows from 
the proof of $\TH$-cofinality in \cite{DM}, Lemma 2.1.1 that the natural map
$$
\TH(M_n(A))\to \TH'(\mathcal F_A(n))
$$
is a $\mathfrak C$-equivalence. In order to specify a homotopy
inverse, consider the map
$$
\TH'_{\bullet}(\mathcal F_A(n))\to\TH_{\bullet}(M_n(A))
$$
obtained by extending a map defined on some summands of $A^{\vee
  n}$ to the whole module by collapsing the remaining summands.
This is a precyclic map in the sense that it commutes
with the face and cyclic operators but not with the degeneracy
operators, see \cite{DM}, Section 1.5, for details.
Consider then the commutative
diagram of precyclic maps
$$
\begin{CD}
\TH_{\bullet}(A)@=\TH_{\bullet}(A)\\
@VVV @VVV\\
\TH_{\bullet}'(\mathcal F_A(n))@>>>\TH_{\bullet}(M_n(A)).
\end{CD}
$$
The vertical map on the right hand side is a $\mathfrak C$-equivalence
by \cite{DM}, Proposition 1.6.18, hence the left hand map is also a
$\mathfrak C$-equivalence as claimed.
\end{proof}

Combining the above results we get the following corollary.

\begin{corollary}
There is a chain of level equivalences
 \[
\TTC(\mathcal F_A)\simeq \TTC'(\mathcal F_A)\xl{\sim}\TTC(A)
\]
and similarly for the other variants of topological cyclic homology. 
\end{corollary}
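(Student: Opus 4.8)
The corollary is a matter of splicing together the two comparisons established above, so the proof is essentially bookkeeping. The chain of level equivalences $\TTC(\mathcal F_A)\simeq\TTC'(\mathcal F_A)$ was in fact already obtained just before Proposition~\ref{Moritaprop}: there we built an explicit chain of level-wise $\mathfrak C$-equivalences between the bispectra $\TH(\mathcal F_A)$ and $\TH'(\mathcal F_A)$, and Proposition~\ref{omegabispectrumproposition} guarantees that these bispectra and all their $C_n$-fixed point spectra are $\Omega$-(bi)spectra. Since for each $n$ the induced map on $C_n$-fixed point spectra is then a level equivalence, the map of $\mathcal N$-diagrams $n\mapsto\rho_n^*\TH(\mathcal F_A)^{C_n}\to\rho_n^*\TH'(\mathcal F_A)^{C_n}$ is a level-wise equivalence of diagrams of positive $\Omega$-spectra, and the homotopy invariance of the homotopy limit functor recorded in Section~\ref{holimDefsection}, together with the identifications in Example~\ref{ITexample}, pushes it down to the stated chain $\TTC(\mathcal F_A)\simeq\TTC'(\mathcal F_A)$.

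The remaining link is supplied by Proposition~\ref{Moritaprop}, which gives the level-wise $\mathfrak C$-equivalence $\TH(A)\to\TH'(\mathcal F_A)$ of spectra with cyclotomic structure; I would first note that the fixed point spectra $\TH(A)^{C_n}$ are again $\Omega$-spectra --- either by a B\"okstedt-approximation argument identical to the one in the proof of Proposition~\ref{omegabispectrumproposition}, or simply by transport along the $\mathfrak C$-equivalence with $\TH'(\mathcal F_A)$. Exactly as above, this descends to a level-wise equivalence of the associated $\mathcal N$-diagrams of fixed point spectra and then, after taking $N\ltimes\mathbb T$-homotopy fixed points, to the level equivalence $\TTC(A)\xr{\sim}\TTC'(\mathcal F_A)$. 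Splicing it with the first chain yields the asserted zig-zag $\TTC(\mathcal F_A)\simeq\TTC'(\mathcal F_A)\xl{\sim}\TTC(A)$.

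For the other variants --- $\TC$, $\TC(-,p)$, $\TTC(-,p)$, the homotopy fixed points $\TH(-)^{h\mathbb T}$, and $\TH$ itself --- I would run the identical argument. Each of these is the homotopy limit of the $\mathbb I\ltimes\mathbb T$-diagram $n\mapsto\rho_n^*\TH(-)^{C_n}$ restricted to one of the subcategories appearing in~(\ref{subcategories}), and all of these diagrams consist of positive $\Omega$-spectra by Proposition~\ref{omegabispectrumproposition}; a level-wise $\mathfrak C$-equivalence is a level equivalence at every object of each such diagram, so homotopy invariance of the homotopy limit gives the corresponding chains, and Proposition~\ref{Moritaprop} provides the analogue of the map $\TH(A)\to\TH'(\mathcal F_A)$ in each case.

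I do not anticipate a real obstacle; the substantive input is contained in Proposition~\ref{omegabispectrumproposition}, in the chain of $\mathfrak C$-equivalences preceding Proposition~\ref{Moritaprop}, and in Proposition~\ref{Moritaprop} itself. The only thing to be careful about is the elementary observation that a single level-wise $\mathfrak C$-equivalence --- a compatible family of equivalences $X^{C_n}\to Y^{C_n}$ over all finite cyclic subgroups --- is precisely what makes every structure map $F_r$ and $R_r$ of the relevant $\mathbb I\ltimes\mathbb T$-diagrams an equivalence, so that the homotopy-limit comparisons for $\TTC$ and for all its variants all go through by the same homotopy-invariance argument.
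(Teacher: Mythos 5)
Your proposal is correct and matches the paper's intent exactly: the corollary is obtained by splicing the chain of $\mathfrak C$-equivalences relating $\TH(\mathcal F_A)$ and $\TH'(\mathcal F_A)$ (established just before Proposition \ref{Moritaprop}) with the $\mathfrak C$-equivalence $\TH(A)\to\TH'(\mathcal F_A)$ of Proposition \ref{Moritaprop}, and passing to homotopy limits using Proposition \ref{omegabispectrumproposition} and the homotopy invariance of homotopy limits of diagrams of positive $\Omega$-spectra. The paper gives no further argument beyond ``combining the above results,'' so your bookkeeping is precisely the intended proof.
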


\section{Homotopy fixed points of the cyclic bar construction}\label{hofixsection}
In this section we analyze the homotopy fixed points of the cyclic bar construction 
$B^{\cy}(\mathcal C)$ under the $N\ltimes\mathbb T$-action introduced in Section 
\ref{B^cysection}. The main point is to prove Theorem \ref{Bcytheorem} which characterizes these homotopy fixed points in terms of the mapping space $\Map(BN,B(\mathcal C))$ when 
$\mathcal C$ is groupoid-like. In general, a left $N\ltimes\mathbb T$-action on a space $X$ amounts to a $\mathbb T$-action together with a family of maps $F_r\co X\to X$ for $r\geq 1$, such that $F_1$ is the identity, $F_r\circ F_s=F_{rs}$, and
$F_r(z^rx)=zF_r(x)$ for all $z$ in $\mathbb T$ and $x$ in $X$. It follows from the discussion in Example \ref{semidirectexample} that $N$ acts on the homotopy fixed points $X^{h\mathbb T}$ and that there is a natural weak homotopy equivalence
\[
X^{h(N\ltimes \mathbb T)}\xr{\sim}(X^{h\mathbb T})^{hN}.
\]
We now specialize to the cyclic bar construction $B^{\cy}(\mathcal C)$ associated to a topological category $\mathcal C$. By definition, the homotopy fixed points is the space of $\mathbb T$-equivariant maps $\Map_{\mathbb T}
(E\mathbb T,B^{\cy}(\mathcal C))$ where $E\mathbb T$ denotes the one-sided bar construction 
$B(\mathbb T,\mathbb T,*)$. Consider the  composite map
\begin{equation}\label{Bcymap}
\Map_{\mathbb T}(E\mathbb T,B^{\cy}(\mathcal C))\to B^{\cy}(\mathcal C)\to B(\mathcal C)
\end{equation}
where the first map is defined by evaluating a function $\alpha\co E\mathbb T\to 
B^{\cy}(\mathcal C)$ at the base point of $E\mathbb T$ (determined by the unit of $\mathbb T$)
and, in the notation from Section \ref{B^cysection}, the second map is the projection that in simplicial degree $k$ forgets the morphism from $c_0$ to $c_k$.  

\begin{lemma}\label{Bcylemma}
If $\mathcal C$ is groupoid-like, then the map $B^{\cy}(\mathcal C)^{h\mathbb T}\to 
B(\mathcal C)$ defined in (\ref{Bcymap}) is a weak homotopy equivalence. 
\end{lemma}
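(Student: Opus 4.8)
The plan is to identify the pair consisting of the $\mathbb T$-space $B^{\cy}(\mathcal C)$ and the map (\ref{Bcymap}) with a free loop space together with its evaluation map, and then to compute the homotopy fixed points of the latter by a formal argument. Write $\Lambda X=\Map(\mathbb T,X)$ for the free loop space of a space $X$, equipped with the $\mathbb T$-action that rotates the source circle. For \emph{any} space $X$, carrying the trivial $\mathbb T$-action, the exponential law identifies a $\mathbb T$-map $E\mathbb T\to\Map(\mathbb T,X)$ with a map $E\mathbb T\times\mathbb T\to X$ that is invariant under the diagonal $\mathbb T$-action; since $\mathbb T$ acts freely on $E\mathbb T$ this diagonal action is free, and its orbit space is identified with $E\mathbb T$ via $[e,w]\mapsto w^{-1}e$. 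Hence $(\Lambda X)^{h\mathbb T}=\Map_{\mathbb T}(E\mathbb T,\Map(\mathbb T,X))$ is homeomorphic to $\Map(E\mathbb T,X)$, and evaluation at the base point of the contractible space $E\mathbb T$ gives a weak homotopy equivalence $(\Lambda X)^{h\mathbb T}\xr{\sim}X$. Unwinding these identifications shows that this equivalence is exactly the composite of the restriction $(\Lambda X)^{h\mathbb T}\to\Lambda X$ along the inclusion $\mathbb T=B_0(\mathbb T,\mathbb T,*)\hookrightarrow E\mathbb T$ with the evaluation $\ev_1\co\Lambda X\to X$ at the unit of $\mathbb T$. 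The lemma will therefore follow once we know that, for $\mathcal C$ groupoid-like, there is a $\mathbb T$-equivariant weak equivalence $B^{\cy}(\mathcal C)\xr{\sim}\Lambda(B\mathcal C)$ carrying the projection in (\ref{Bcymap}) to $\ev_1$, at least up to $\mathbb T$-equivariant homotopy.

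Producing this equivalence is the main point. First one reduces to the case where $\pi_0\mathcal C$ is a \emph{connected} groupoid: the functors $B^{\cy}$, $B$, $\Lambda(-)$ and $(-)^{h\mathbb T}$ all commute with disjoint unions (the last two because $E\mathbb T$ and $\mathbb T$ are connected), and the projection and $\ev_1$ respect the resulting decompositions, so it suffices to treat each ``component subcategory'' of $\mathcal C$ separately. Next, for $\mathcal C$ with $\pi_0\mathcal C$ a connected groupoid, one picks an object $c$ and sets $M=\mathcal C(c,c)$; then $\pi_0M=(\pi_0\mathcal C)(c,c)$ is a group, so $M$ is a grouplike topological monoid, and the inclusion of the full subcategory on $\{c\}$ is a weak equivalence of topological categories in the relevant sense. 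As with the invariance of $B^{\cy}$ under natural isomorphisms used in Section~\ref{cyclicalgebraicKsection} (cf.\ \cite{DM}, Section~1.5), this inclusion induces $\mathbb T$-equivariant weak equivalences $B^{\cy}(M)\xr{\sim}B^{\cy}(\mathcal C)$ and $B(M)\xr{\sim}B(\mathcal C)$, compatible with the projections. Finally, for a grouplike topological monoid $M$ one invokes the classical identification of the realized cyclic bar construction with the free loop space: there is a natural $\mathbb T$-equivariant weak equivalence $B^{\cy}(M)\xr{\sim}\Lambda(BM)$ under which the projection $B^{\cy}(M)\to BM$ becomes $\ev_1$ (see \cite{G}), the grouplike hypothesis being precisely what makes the natural comparison map an equivalence.

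The step I expect to cause the most trouble is making the free loop space identification \emph{simultaneously} $\mathbb T$-equivariant and compatible with the two maps down to $B(\mathcal C)$. The difficulty is that the projection $B^{\cy}(\mathcal C)\to B(\mathcal C)$ is only a simplicial, not a cyclic, map — it does not commute with the $\mathbb T$-action — so compatibility with (\ref{Bcymap}) must be formulated only after restricting along $\mathbb T\hookrightarrow E\mathbb T$ and evaluating at the base point, and one must keep careful track of how the standard model of $B^{\cy}(M)\simeq\Lambda(BM)$ transports the combinatorial cyclic operators to loop rotation. The natural bookkeeping device here is the edgewise subdivision functors $\sd_r$ and the homeomorphisms $D_r$ recalled in Section~\ref{cyclotomicsection}, since these are exactly what interpolate between the $C_r$-symmetry of $\sd_r B^{\cy}_{\bullet}(\mathcal C)$ and the geometric $\mathbb T$-action on the realization; once this machinery is in place, the verification that the relevant square commutes up to $\mathbb T$-equivariant homotopy is a lengthy but routine simplicial computation. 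The residual $N$-action needed for the full Theorem~\ref{Bcytheorem}, coming from the Frobenius maps $F_r$, is then tracked through the same identifications, using Example~\ref{semidirectexample}.
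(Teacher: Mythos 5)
Your overall strategy---identify $B^{\cy}(\mathcal C)$ with the free loop space $\Map(\mathbb T,B(\mathcal C))$ by a $\mathbb T$-equivariant equivalence compatible with the projection, and then compute the homotopy fixed points of the free loop space of a trivial $\mathbb T$-space by the adjunction/orbit argument---is exactly the paper's, and your first paragraph matches the second half of the paper's proof. But your route to the free loop space identification contains a genuine gap, and your third paragraph worries about a difficulty that does not actually arise. On the second point first: the comparison map is simply the adjoint of the composite $\mathbb T\times B^{\cy}(\mathcal C)\to B^{\cy}(\mathcal C)\to B(\mathcal C)$ (action followed by projection). This map $B^{\cy}(\mathcal C)\to\Map(\mathbb T,B(\mathcal C))$ is $\mathbb T$-equivariant on the nose for the rotation action on the source circle, and evaluating it at $1\in\mathbb T$ gives back the projection on the nose. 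So the compatibility of the free loop space model with (\ref{Bcymap}) is automatic by construction; no edgewise subdivision and no ``up to $\mathbb T$-equivariant homotopy'' bookkeeping is needed for this lemma (the $\sd_r$ and $D_r$ machinery is only needed later, for the coherence of the $N$-action). The only substantive input is that this adjoint map is a weak equivalence when $\mathcal C$ is groupoid-like.

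The gap is in your reduction of that input to the case of a single grouplike monoid. For a connected groupoid-like $\mathcal C$ with chosen object $c$, the inclusion of the full subcategory on $\{c\}$ is \emph{not} an equivalence of topological categories: other objects are isomorphic to $c$ only in $\pi_0\mathcal C$, so one cannot choose actual inverse morphisms and natural isomorphisms witnessing an equivalence. The invariance of $B^{\cy}$ used in Section~\ref{cyclicalgebraicKsection} (via \cite{DM}, Section~1.5) applies only to natural \emph{isomorphisms} and honest equivalences of categories; it does not give you $B^{\cy}(M)\xr{\sim}B^{\cy}(\mathcal C)$ for this weaker, ``Dwyer--Kan'' type of equivalence. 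That statement is true for groupoid-like categories, but proving it requires essentially the same fibration/group-completion style argument as proving the free loop space identification for categories directly---so the reduction buys nothing and, as written, rests on an unjustified step. (Strictifying the equivalence by inverting the weak equivalences is possible, but that is precisely the device the paper is designed to avoid.) The fix is to drop the reduction and run Goodwillie's argument from \cite{G1} directly for the adjoint map $B^{\cy}(\mathcal C)\to\Map(\mathbb T,B(\mathcal C))$ associated to a groupoid-like topological category, which is what the paper does.
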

\begin{proof}
We first consider the composition 
\[
\mathbb T\times B^{\cy}(\mathcal C)\to B^{\cy}(\mathcal C)\to B(\mathcal C)
\]
where the first map is the $\mathbb T$-action on $B^{\cy}(\mathcal C)$ and the second is the projection considered above. The adjoint is a $\mathbb T$-equivariant map to the free loop space of $B(\mathcal C)$, 
\[
B^{\cy}(\mathcal C)\to \Map(\mathbb T,B(\mathcal C)),
\]  
and it is well-known that this is a weak homotopy equivalence when $\mathcal C$ is 
groupoid-like. The argument is similar to that used to prove that the cyclic bar construction of a (well-based) grouplike topological monoid $G$ is equivalent to the free loop space on $BG$, see e.g.\ \cite{G1}. As a technical point, our assumption that the topological category $\mathcal C$ be well-based (see Section \ref{holimDefsection}) implies that the simplicial spaces 
$B_{\bullet}(\mathcal C)$ and $B^{\cy}_{\bullet}(\mathcal C)$ are good in the sense of \cite{S}, Appendix A. It follows that the map of $\mathbb T$-homotopy fixed points induced by the above map is also a weak homotopy equivalence. 
The homotopy fixed points of the free loop space are determined by
\[
\Map(\mathbb T,B(\mathcal C))^{h\mathbb T}\simeq\Map_{\mathbb T}
(\mathbb T\times E\mathbb T,B(\mathcal C))\simeq\Map(E\mathbb T,B(\mathcal C))\xr{\sim}B(\mathcal C)
\]
where the last map is defined by evaluating a function at the base point of $E\mathbb T$. It follows easily from the definition that the composition of the weak homotopy equivalences
\[
B^{\cy}(\mathcal C)^{h\mathbb T}\xr{\sim}\Map(\mathbb T,B(\mathcal C))^{h\mathbb T}\xr{\sim}B(\mathcal C) 
\]
is the map claimed to be a weak homotopy equivalence in the lemma. 
\end{proof}

In the following we shall view $B(\mathcal C)$ as an $N$-space with trivial action. The map in Lemma \ref{Bcylemma} is then not strictly compatible with the $N$-actions, but we shall prove that it is so up to canonical coherent homotopies which is enough to get a natural map of homotopy fixed points. We first introduce some machinery which is convenient for analyzing homotopy fixed points of $N$-actions. 

\subsection{Homotopy fixed points for $N$-actions}
Consider in general an $N$-space $X$. 
Writing $\mathcal P$ for the set of prime numbers, we 
identify $N$ with the free commutative monoid generated by $\mathcal
P$ and we shall view $X$ as a space equipped with a family of
commuting operators
\begin{equation}\label{commutingoperatorseq}
F_p\co X\to X,\quad p\in \mathcal P.
\end{equation}
By definition, the homotopy fixed points of $X$ are defined by
$$
X^{hN}=\Map_{N}(B(N,N,*),X),
$$
where $B(N,N,*)$ denotes the one-sided bar construction and the
right hand side is the space of $N$-equivariant maps. It is easy to see 
that if $EN$ is any contractible free
$N$-CW complex, then $X^{hN}$ is homotopy equivalent to $\Map_N(EN,X)$. In
the following we shall consider a model $EN$ that is convenient
for writing down explicit homotopies. Given a finite subset
$U\subseteq \mathcal P$, let $\langle U\rangle$ be the submonoid
of $N$ generated by $U$. We let
$$
E\langle U\rangle =\prod_{p\in U} [0,\infty),
$$
and give this the product action of $\langle U\rangle $
in which an element $p\in U$ acts on the $p$th component by
translation, $t_p\mapsto t_p+1$. Notice that there is a canonical
inclusion of $E\langle U\rangle$ in the 1-skeleton of $B(N,N,*)$ and
that this induces an $N$-equivariant homotopy equivalence.
Given a $\langle U\rangle $-space $X$, we now redefine the
homotopy fixed points by
$$
X^{h\langle U\rangle}=\Map_{\langle U\rangle}(E\langle U\rangle, X).
$$
We shall need some notation for such homotopy fixed points. Let $I^U$
be the $|U|$-dimensional unit cube with coordinates indexed by the
elements of $U$. Given a subset $V\subseteq U$, we define the
\emph{$V$\!th lower face} of $I^U$ to be the $|U-V|$-dimensional cube
$$
\partial_VI^U=\{(t_p)\in I^U\co t_p=0 \text{ for }p\in V\}.
$$
Similarly, we define the \emph{$V$\!th upper face} of $I^U$ by
$$
\partial^V\!I^U=\{(t_p)\in I^U\co t_p=1 \text{ for }p\in V\}.
$$
We shall often identify $\partial_VI^U$ and $\partial^V\!I^U$ with
$I^{U-V}$ in the canonical way. For a map $\alpha\co I^U\to X$ we
define
$$
\partial_V\alpha,\ \partial^V\!\alpha\co I^{U-V}\to X
$$
by respectively restricting to $\partial_VI^U$ and $\partial^V\!I^U$. 
Suppose now that $X$ is a space with a $\langle U\rangle$-action
specified by a family of commuting operators $F_p$
as in (\ref{commutingoperatorseq}). Given $V\subseteq U$, we write $F_V$
for the composition of the $F_p$'s indexed by $p\in V$. With this
notation we may identify $X^{h\langle U\rangle}$ with the subspace of
the mapping space $\Map(I^U,X)$ defined by the condition that
$$
\partial^V\!\alpha=F_V\circ\partial_V\alpha, \quad \text{for all $V\subseteq U$}.
$$
We let $EN$ be the colimit of the spaces $E\langle U\rangle$ under the
natural inclusions (using the point $0$ in $[0,\infty)$ as vertex) and redefine the 
homotopy fixed points of an $N$-space by
$$
X^{hN}=\Map_N(EN,X).
$$

\subsection{Coherent homotopies}
Given $N$-spaces $X$ and $Y$ we shall now make explicit what it
means for a map $f\co X\to Y$ to be compatible with the actions up
to coherent homotopy. Let us first consider the situation in which
a pair of spaces $X$ and $Y$ each comes equipped with a self-map,
denoted respectively by $F^X$ and $F^Y$. In this case the
condition for a map $f$ to be homotopy compatible with the
actions is simply that there exists a homotopy $h\co X\times I\to
Y$ from $f\circ F^X$ to $F^Y\circ f$. A choice of such a homotopy determines a
map of homotopy fixed points by concatenating $f\alpha$ and
$h(\alpha(0),-)$, that is,
$$
f^{h}\co X^{hF^X}\to Y^{hF^Y},\quad
f^{h}(\alpha)(t)=
\begin{cases}
f\alpha(2t),& 0\leq t\leq 1/2,\\
h(\alpha(0),2t-1),&1/2\leq t\leq 1.
\end{cases}
$$
\begin{lemma}\label{hofixlemma}
If $f$ is a weak homotopy equivalence, then so is $f^{h}$.
\end{lemma}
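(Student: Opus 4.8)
The plan is to exhibit $f^h$ as a map between homotopy fibers (or homotopy equalizers) of diagrams that are level-wise weak equivalences, and invoke the homotopy invariance of homotopy limits. First I would recall that the homotopy fixed point space $X^{hF^X}$ of a single self-map $F^X\colon X\to X$ can be described as the homotopy equalizer of the pair of maps $\id,F^X\colon X\to X$; concretely it is the space of pairs $(x,\gamma)$ with $x\in X$ and $\gamma$ a path from $x$ to $F^X(x)$, which is exactly the subspace of $\Map(I,X)$ cut out by the condition $\partial^{\{p\}}\alpha = F^X\circ\partial_{\{p\}}\alpha$ from the preceding subsection (with $|U|=1$). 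In this description the map $f^h$ sends $(x,\gamma)$ to $(f(x),\, (f\gamma)\ast h(x,-))$, the concatenation of $f$ applied to $\gamma$ with the chosen homotopy $h$ evaluated at the starting point $x$; this is the reformulation of the piecewise definition given just before the lemma.

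Next I would set up the commuting ladder of spaces
\[
\begin{CD}
X @>\id>> X @<F^X<< X\\
@Vf VV @Vf VV @Vf VV\\
Y @>\id>> Y @<F^Y<< Y
\end{CD}
\]
except that the right-hand square commutes only up to the chosen homotopy $h$, not strictly. The clean way to handle this is to replace $F^X$ by the mapping-path-space factorization: let $\widetilde X = X\times_Y Y^I$ be the space of pairs $(x,\delta)$ with $\delta$ a path in $Y$ from $f(x)$ to $F^Y(f(x))$; the homotopy $h$ together with $f$ produces a map $X\to\widetilde X$ over the two obvious maps to $Y$, and the homotopy fixed points $X^{hF^X}$ are computed by the strictly commuting square obtained after this replacement. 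Since $f$ is a weak homotopy equivalence, each vertical map in the resulting (now strictly commuting) diagram of two-term cosimplicial-type diagrams is a weak equivalence, so the induced map on homotopy equalizers is a weak equivalence by the homotopy invariance property of homotopy limits recalled in Section \ref{holimDefsection}. Tracing through the identifications shows this induced map is exactly $f^h$ up to a homotopy that does not affect the conclusion.

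The main obstacle I anticipate is purely bookkeeping: one must check that the ad hoc concatenation formula defining $f^h$ genuinely agrees, up to a canonical homotopy, with the map produced by the abstract homotopy-equalizer argument, and that all the reparametrizations of the interval involved are coherent. This is routine but slightly fiddly. An alternative, and perhaps cleaner, route that avoids the mapping-path-space gymnastics is the direct approach: build an explicit homotopy inverse. If $g\colon Y\to X$ is a homotopy inverse of $f$ with chosen homotopies $gf\simeq\id_X$ and $fg\simeq\id_Y$, and if $k$ is a homotopy witnessing that $g$ is homotopy compatible with the self-maps (obtained by pasting $h$ with the chosen homotopies), then $g^k\colon Y^{hF^Y}\to X^{hF^X}$ is defined by the analogous concatenation formula, and one checks that $g^k\circ f^h$ and $f^h\circ g^k$ are homotopic to the respective identities by writing down the evident homotopies of maps out of a cube. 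Either way the content is elementary; the lemma is essentially the statement that "homotopy fixed points for a single endomorphism" is a homotopy functor, and I would present whichever of the two arguments reads most smoothly in context. Since the very next use of this lemma is for $N$-actions, which are generated by the commuting operators $F_p$, I would also remark that the same argument applies verbatim over a finite generating set $U$, using cubes $I^U$ in place of $I$ and the face conditions $\partial^V\!\alpha=F_V\circ\partial_V\alpha$, and then passes to the colimit over $U$.
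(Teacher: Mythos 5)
Your plan is essentially the paper's proof: the paper identifies $X^{hF^X}$ and (after rescaling) $Y^{hF^Y}$ with pullbacks of the diagrams $X\to X\times X\leftarrow X^I$ and $\bar Y\to Y\times Y\leftarrow Y^I$, where $\bar Y=\{(y,\omega)\in Y\times Y^I\co \omega(1)=F^Y(y)\}$ is exactly the mapping-path-space rectification you propose (placed on the target side, so that $x\mapsto (f(x),h(x,-))$ makes the square commute strictly), and then concludes from the term-wise equivalence of homotopy cartesian diagrams. The one slip is in your description of $\widetilde X$: the path supplied by $h$ at $x$ runs from $f(F^X(x))$ to $F^Y(f(x))$, not from $f(x)$; with that endpoint corrected your argument coincides with the paper's, and your remark that the same cubical argument works for each finite $U$ is precisely how the paper uses the lemma in Proposition \ref{Ncoherenceproposition}.
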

\begin{proof}
We identify the homotopy fixed points of $F^X$ with the pullback of
the diagram
$$
X\xr{(\id_X,F^X)}X\times X \xl{(\ev_0,\ev_1)} X^I
$$
and, letting
$$
\bar Y=\{(y,\omega)\in Y\times Y^I\co \omega(1)=F^Y(y)\}
$$
and rescaling, we identify the homotopy fixed points of $F^Y$ with the
pullback of the diagram
$$
\bar Y\xr{(p_Y,\ev_0)} Y\times Y\xl{(\ev_0,\ev_1)} Y^I.
$$
Here $\ev_0$ and $\ev_1$ evaluate a path at its endpoints and
$(p_Y,\ev_0)$ is defined by $(y,\omega)\mapsto (y,\omega(0))$.
From this point of view, $f^{h}$ is induced by a map of pullback
diagrams which is a term-wise weak homotopy equivalence. The 
result now follows from the fact that these diagrams are homotopy 
cartesian. 
\end{proof}
Let us now return to the case of two $N$-spaces $X$ and $Y$ and
let us write $F_p^X$ and $F_p^Y$ for the corresponding operators
(\ref{commutingoperatorseq}). Then we say that a map $f\co X\to Y$
is compatible with the actions up to coherent homotopy if there is
a family of higher homotopies
$$
h^U\co X\times I^U\to Y,
$$
indexed on the finite subsets $U\subseteq \mathcal P$,
such that $h^{\emptyset}=f$ and
\begin{equation}\label{Ncoherenceeq}
\partial_Vh^U=h^{U-V}\circ(F_V^X\times I^{U-V}), \quad
\partial^V\!h^U=F^Y_V\circ h^{U-V},
\end{equation}
whenever $V\subseteq U$. Here $\partial_Vh^U$ and
$\partial^Vh^U$ are the maps $X\times I^{U-V}\to Y$ obtained by
restricting to $\partial_VI^U$ and $\partial^VI^U$.

\begin{proposition}\label{Ncoherenceproposition}
Let $X$ and $Y$ be $N$-spaces and let $f\co X\to Y$ be a map that is
compatible with the actions up to coherent homotopy in the above
sense. Then a choice of coherent homotopies determines a map
$$
f^{h}\co X^{hN}\to Y^{hN}
$$
and if $f$ is a weak homotopy equivalence, then so is $f^{h}$.
\end{proposition}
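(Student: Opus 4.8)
The plan is to construct $f^{h}$ as an inverse limit of maps defined one prime at a time, and to prove it is an equivalence by induction on the number of primes involved, reducing at each stage to the one-variable statement of Lemma \ref{hofixlemma}. Recall that $EN=\colim_{U}E\langle U\rangle$, so that $X^{hN}=\lim_{U}X^{h\langle U\rangle}$ as $U$ ranges over the finite subsets of $\mathcal P$, and that $X^{h\langle U\rangle}$ is the subspace of $\Map(I^{U},X)$ cut out by the conditions $\partial^{V}\!\alpha=F_{V}\circ\partial_{V}\alpha$, $V\subseteq U$. For each finite $U$ I would define $f^{h}_{U}\co X^{h\langle U\rangle}\to Y^{h\langle U\rangle}$ as follows. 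Bisect every coordinate of $I^{U}$; the top cells of the resulting subdivision are the subcubes $C_{W}=\{s\in I^{U}\co s_{p}\le 1/2$ for $p\notin W$, $s_{p}\ge 1/2$ for $p\in W\}$ indexed by the subsets $W\subseteq U$, and on $C_{W}$ one puts
\[
f^{h}_{U}(\alpha)(s)=h^{W}\!\bigl(\alpha(\hat s),(2s_{p}-1)_{p\in W}\bigr),
\]
where $\hat s\in I^{U}$ has $p$th coordinate $2s_{p}$ for $p\notin W$ and $0$ for $p\in W$. For $|U|\le 1$ this is exactly the concatenation formula used to define $f^{h}$ in the one-variable discussion preceding Lemma \ref{hofixlemma}.

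I would then verify, using the coherence relations (\ref{Ncoherenceeq}) together with the identities $\partial^{\{p\}}\alpha=F_{p}\circ\partial_{\{p\}}\alpha$, the following three points: the definitions on $C_{W}$ and $C_{W\cup\{p\}}$ agree on the common face $\{s_{p}=1/2\}$, so $f^{h}_{U}(\alpha)$ is a well-defined continuous map $I^{U}\to Y$; the map $f^{h}_{U}(\alpha)$ again satisfies the defining conditions of $Y^{h\langle U\rangle}$; and $\partial_{V}\circ f^{h}_{U}=f^{h}_{U\setminus V}\circ\partial_{V}$ for all $V\subseteq U$. The last identity is precisely the compatibility of the $f^{h}_{U}$ with the restriction maps $X^{h\langle U'\rangle}\to X^{h\langle U\rangle}$, which are given by $\partial_{U'\setminus U}$, so the $f^{h}_{U}$ assemble to the required map $f^{h}\co X^{hN}\to Y^{hN}$. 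These are routine, if somewhat lengthy, manipulations with faces of cubes.

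To see that $f^{h}$ is a weak homotopy equivalence when $f$ is, I would first note that the restriction maps in the tower are fibrations: for $U'=U\cup\{p\}$ there is a natural identification $X^{h\langle U'\rangle}\cong(X^{h\langle U\rangle})^{hF_{p}}$, with $F_{p}$ acting on $X^{h\langle U\rangle}$ by postcomposition, and evaluation at the basepoint realizes the projection to $X^{h\langle U\rangle}$ as a pullback of the path-space fibration $(X^{h\langle U\rangle})^{I}\to X^{h\langle U\rangle}\times X^{h\langle U\rangle}$. Hence $X^{hN}$ is the limit of a tower of fibrations along the cofinal chain $\emptyset\subset\{p_{1}\}\subset\{p_{1},p_{2}\}\subset\cdots$, and it suffices to prove that each $f^{h}_{U}$ is a weak homotopy equivalence. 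I would do this by induction on $|U|$; the case $U=\emptyset$ is the hypothesis on $f$. For the inductive step, fix $p\in U$ and set $U_{0}=U\setminus\{p\}$. Under the identifications $X^{h\langle U\rangle}\cong(X^{h\langle U_{0}\rangle})^{hF_{p}}$ and $Y^{h\langle U\rangle}\cong(Y^{h\langle U_{0}\rangle})^{hF_{p}}$, splitting $I^{U}=I^{U_{0}}\times I$ into its two halves in the last coordinate and inspecting the subcube formula shows that $f^{h}_{U}$ is exactly the map obtained from $f^{h}_{U_{0}}$ and a homotopy $h'$ by the concatenation recipe preceding Lemma \ref{hofixlemma}, where $h'(\mu,u)\in Y^{h\langle U_{0}\rangle}$ is given on the subcube $C^{U_{0}}_{W_{0}}$ by $s\mapsto h^{W_{0}\cup\{p\}}\bigl(\mu(\hat s),((2s_{q}-1)_{q\in W_{0}},u)\bigr)$; relations (\ref{Ncoherenceeq}) show that $h'$ is a homotopy from $\mu\mapsto f^{h}_{U_{0}}(F_{p}\mu)$ to $\mu\mapsto F_{p}\circ f^{h}_{U_{0}}(\mu)$. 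Since $f^{h}_{U_{0}}$ is a weak equivalence by the inductive hypothesis, Lemma \ref{hofixlemma} gives that $f^{h}_{U}$ is one too, completing the induction.

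The step I expect to be the main obstacle is this last one: matching the explicit subcube formula for $f^{h}_{U}$ with an iterate of the one-variable construction, i.e.\ identifying the homotopy $h'$ correctly, so that Lemma \ref{hofixlemma} can be brought to bear. The remaining points---defining $f^{h}_{U}$, checking continuity and the boundary conditions, the compatibility with restriction maps, and the observation that the tower consists of fibrations---are careful but routine manipulations with the cubical model for homotopy fixed points.
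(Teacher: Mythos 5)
Your proposal is correct and follows essentially the same route as the paper: the same subdivision of $I^U$ into the $2^{|U|}$ subcubes indexed by subsets $W\subseteq U$, the same formula $h^W(\alpha(\hat s),(2s_p-1)_{p\in W})$ on each subcube, and the same inductive reduction to Lemma \ref{hofixlemma} via the identification $X^{h\langle U\rangle}\cong(X^{h\langle U_0\rangle})^{hF_p}$ (the paper phrases this as compatibility with $\langle U\cup U'\rangle\cong\langle U\rangle\times\langle U'\rangle$ for disjoint $U,U'$). Your write-up is if anything slightly more explicit about the fibration property of the tower and about identifying the intermediate homotopy $h'$ in the inductive step, both of which the paper leaves implicit.
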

\begin{proof}
By definition, $X^{hN}$ is the limit of the tower of fibrations
defined by the homotopy fixed points $X^{h\langle U\rangle}$ and
similarly for $Y^{hN}$. Thus, it suffices to construct a
compatible family of maps
$$
f^{h\langle U\rangle}\co X^{h\langle U\rangle}\to X^{h\langle U\rangle}
$$
such that if $f$ is an equivalence, then so is $f^{h\langle U\rangle}$
for each $U$. By compatible we mean that the diagrams
\[
\begin{CD}
X^{h\langle U\rangle}@>f^{h\langle U\rangle}>> Y^{h \langle U\rangle}\\
@VV \partial_{U-V} V @VV \partial_{U-V} V\\
X^{h\langle V\rangle}@>f^{h\langle V\rangle}>> Y^{h \langle V\rangle}
\end{CD}
\]
commute whenever $V\subseteq U$. In order to define these maps we 
subdivide $I^U$ in $|U|^2$ subcubes by introducing a new vertex at 
the midpoint of each edge. For each subset $V\subseteq U$, let $I^U_V$ 
be the subcube
$$
I^U_V=\Big\{(t_p)\in I^U\co
\begin{cases}
0\leq t_p\leq 1/2,& p\notin V\\
1/2\leq t_p\leq 1,&p\in V
\end{cases}
\Big\}.
$$
Given an element $\alpha$ in $X^{h\langle U\rangle}$, we shall define
$f^{h\langle U\rangle}\alpha$ by specifying its restriction to each of these
subcubes. For each $V\subseteq U$, consider the composite map
$h^V\partial_V\alpha$ defined by
$$
I^U\simeq I^{U-V}\times I^V\simeq \partial_V I^U\times
I^V\xr{\partial_V\alpha\times I^V}X\times I^V\xr{h^V}Y,
$$
where the first map permutes the coordinates. Identifying $I^U$ with
$I^U_V$ via the canonical coordinate-wise affine
homeomorphism, this defines the restriction of $f^{h\langle
  U\rangle}\alpha$ to $I^U_V$.  It follows from the
definition of a coherent homotopy that this is a well-defined
element in $Y^{h\langle U\rangle}$. Furthermore, given disjoint sets
$U$ and $U'$,
this construction is compatible with the canonical isomorphism
$$
\langle U\cup U'\rangle\simeq \langle U\rangle \times \langle
U'\rangle
$$
in the sense that there is a commutative diagram
$$
\begin{CD}
X^{h\langle U\cup U'\rangle}@>f^{h\langle U\cup U'\rangle}>>
Y^{h\langle U\cup U'\rangle} \\
@VV\simeq V @VV\simeq V\\
\big(X^{h\langle U\rangle }\big)^{h\langle U'\rangle}@>\big(f^{h\langle
U\rangle}\big)^{h\langle U'\rangle}>>
\big(Y^{h\langle U\rangle }\big)^{h\langle U'\rangle}.
\end{CD}
$$
Using this together with Lemma \ref{hofixlemma}, it follows by induction
that if $f$ is an equivalence, then so is $f^{h\langle U\rangle}$.
\end{proof}

\subsection{The proof of Theorem \ref{Bcytheorem}}\label{coherenceproofsect} 
In order to finish the proof of Theorem \ref{Bcytheorem} we must show that the projection 
$B^{\cy}(\mathcal C)\to B(\mathcal C)$ is compatible with the $N$-actions up to coherent homotopy when we give $B(\mathcal C)$ the trivial action. For this purpose 
we introduce a new family of operators $\bar F_r$ on
$B^{\cy}(\mathcal C)$. Let $\bar D_r\co \id_{\Delta}\to\sqcup_r$ be
the natural transformation that includes $[k]$ as the last
component in $\sqcup_r[k]$ and use the same notation for the
associated map of cosimplicial spaces,
$$
\bar D_r\co\Delta[k]\to\Delta[\sqcup_r[k]].
$$
Notice that there is a cosimplicial homotopy
$$
\Delta[k]\times I\to \Delta[\sqcup_r[k]],\quad (v,t)\mapsto
(1-t)D_rv+t\bar D_rv
$$
relating this to the map $D_r$ from Section
\ref{cyclotomicsection}. If $X_{\bullet}$ is a simplicial space
we get an induced map
$$
\bar D_r\co|\sd_rX_{\bullet}|\to|X_{\bullet}|,\quad [x,v]\mapsto
     [x,\bar D_rv]
$$
that is homotopic to the homeomorphism (\ref{D_r}) by the above
homotopy. It follows from the definition that $\bar D_r$ is the
topological realization of the simplicial map $\bar
D_r^*\co\sd_r X_{\bullet}\to X_{\bullet}$ defined by
$$
\bar D^*_{r,k}=d_0^{(r-1)(k+1)}\co \sd_r X_k=X_{r(k+1)-1}\to X_k.
$$
The definition of the operator $\bar F_r$ is now analogous to the definition
of $F_r$ in Section~\ref{cyclicbarsection} except that we use $\bar D_r$ instead of $D_r$,
$$
\bar F_r\co  |B_{\bullet}^{\cy}(\mathcal C)|
\xr{|\Delta_r|} |\sd_rB^{\cy}_{\bullet}(\mathcal C)^{C_r}|
\xr{}|\sd_rB^{\cy}_{\bullet}(\mathcal C)|
\xr{\bar D_r} |B^{\cy}_{\bullet}(\mathcal C)|.
$$
One checks that $\bar F_r\bar
F_s=\bar F_{rs}$ such that these operators define an $N$-action on
$B^{\cy}(\mathcal C)$. The following lemma states that the identity  
on $B^{\cy}(\mathcal C)$ is compatible with these two $N$-actions 
up to coherent homotopy. 
\begin{lemma}\label{FbarFlemma}
The $N$-actions on $B^{\cy}(\mathcal C)$ induced by the $F_r$ and the
$\bar F_r$ operators are compatible up to coherent homotopies.
\end{lemma}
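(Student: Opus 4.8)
The plan is to build the coherent homotopies $h^U$ directly, by induction on $|U|$, using that $F_r$ and $\bar F_r$ differ only in the very last map of their defining composition in Section~\ref{cyclicbarsection}. On realizations both are of the form $[x,v]\mapsto[\Delta_{r,k}(x),\psi_r(v)]$, where $\psi_r$ is the realization of a cosimplicial map $\Delta[\bullet]\to\Delta[\sqcup_r\bullet]$ that assigns probability weights to the $r$ ``blocks'' of $\sqcup_r[k]=[r(k+1)-1]$ and copies $v$ into each block scaled by that block's weight: for $F_r$ the weights are uniform (this is $D_r$) and for $\bar F_r$ they are all concentrated on the last block (this is $\bar D_r$). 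Since $\sqcup_r$ acts block-wise on morphisms of $\Delta$, \emph{any} continuous family of such block-weights $\Lambda\co I^U\to\Delta[r-1]$ --- where $\Delta[r-1]$ is the simplex of probability weights on the $r$ blocks --- determines a cosimplicial homotopy $\Psi$, and the formula $h([x,v],t)=[\Delta_{r,k}(x),\Psi(v,t)]$ descends to a well-defined map $B^{\cy}(\mathcal C)\times I^U\to B^{\cy}(\mathcal C)$ because $\Delta_{r,\bullet}$ is a simplicial map. Thus, writing $r_U=\prod_{p\in U}p$, the task is reduced to choosing, for each finite $U\subseteq\mathcal P$, a continuous family $\Lambda^U\co I^U\to\Delta[r_U-1]$ so that the associated $h^U$ satisfy $h^{\emptyset}=\id$ and the boundary identities (\ref{Ncoherenceeq}) with $F^X_V=F_V$ and $F^Y_V=\bar F_V$.

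The point to keep in mind is how the block combinatorics encodes these conditions. If $V\subseteq U$, then $F_V$ applied first (innermost) subdivides each $k$-simplex uniformly into $r_V$ fine blocks, so composing it with $h^{U\setminus V}$ produces the block structure in which the $V$-coordinates index the \emph{fine} blocks, all carried with equal weight; dually, $\bar F_V$ applied last (outermost) after $h^{U\setminus V}$ produces the structure in which the $V$-coordinates index the \emph{coarse} blocks, all collapsed onto the last one. These are precisely the restrictions to $\partial_V I^U$ and $\partial^V I^U$ demanded by (\ref{Ncoherenceeq}), and they depend only on the values of the weight family on the corresponding faces.

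Now I carry out the induction. For $U=\emptyset$ put $h^{\emptyset}=\id$. Given the $h^W$ for $|W|<|U|$, the singleton conditions prescribe $h^U$ on each facet $\partial_{\{p\}}I^U$ (as $h^{U\setminus p}\circ(F_p\times I^{U\setminus p})$) and on each facet $\partial^{\{p\}}I^U$ (as $\bar F_p\circ h^{U\setminus p}$); translated through the previous paragraph these become explicit prescriptions of $\Lambda^U$ on $\partial I^U$. One checks that the prescriptions agree on facet intersections: on $\partial_{\{p\}}\cap\partial_{\{q\}}$ both give $h^{U\setminus\{p,q\}}\circ F_{pq}$ because $F_pF_q=F_qF_p$; on $\partial^{\{p\}}\cap\partial^{\{q\}}$ both give $\bar F_{pq}\circ h^{U\setminus\{p,q\}}$ because $\bar F_p\bar F_q=\bar F_q\bar F_p$; and on $\partial_{\{p\}}\cap\partial^{\{q\}}$ both give $\bar F_q\circ h^{U\setminus\{p,q\}}\circ F_p$. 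Hence the data assemble to a continuous map $\partial I^U\to\Delta[r_U-1]$, which extends over $I^U$ since $\Delta[r_U-1]$ is convex; fix any such extension $\Lambda^U$ (for instance by coning off from the uniform-weight point, which is anyway the value already forced at the vertex $0\in I^U$). For a general $V\subseteq U$ the face $\partial_V I^U$ (respectively $\partial^V I^U$) lies inside the facets indexed by $V$, where $h^U$ is already given by the iterated facet recipe, so the remaining identities in (\ref{Ncoherenceeq}) hold as well. In particular $h^U$ restricts to $F_{r_U}$ at the vertex $0$ and to $\bar F_{r_U}$ at the vertex $1$, so the two $N$-actions are recovered at the extreme corners of the cube, and by Proposition~\ref{Ncoherenceproposition} the family $\{h^U\}$ is exactly the coherence data needed.

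The genuinely delicate part of the argument is the bookkeeping in the inductive step: one must keep straight that precomposition with $F_V$ creates the inner block structure while postcomposition with $\bar F_V$ creates the outer one --- so that no single product-indexing of the $r_U$ blocks works for all $V$ simultaneously, which is precisely why the extension-over-the-cube step cannot be avoided --- and one must verify that the singleton prescriptions are mutually consistent on overlapping facets. This is combinatorial rather than homotopy-theoretic, and there is no obstruction to the extension step because probability weights form a convex set; the identities $D_{rs}=D_rD_s$, $\bar D_{rs}=\bar D_r\bar D_s$ together with the corresponding compatibility for the diagonal maps $\Delta_r$, all from \cite{BHM} and Section~\ref{cyclotomicsection}, are what make the translations used above literally correct.
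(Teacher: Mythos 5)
Your argument is correct, but it takes a genuinely different route from the paper's. The paper writes down the coherent homotopies in closed form: it defines a cosimplicial homotopy
\[
h^U(v,(t_p))=\prod_{p\in U}\bigl((1-t_p)D_pv+t_p\bar D_pv\bigr),
\]
with the ordering convention that all $D_p$'s are applied innermost and all $\bar D_p$'s outermost, realizes this, precomposes with the diagonal $\Delta_U$ into the $C_U$-fixed points, and verifies the boundary identities $\partial_Vh^U=h^{U-V}\circ(D_V\times I^{U-V})$ and $\partial^V h^U=\bar D_V\circ h^{U-V}$ by direct inspection of the formula. You instead isolate the same underlying observation --- that $D_r$, $\bar D_r$, and all their composites are ``weighted diagonals'' parametrized by the simplex $\Delta[r_U-1]$ of block-weights, and that any continuous family of weights yields a well-defined self-homotopy of $B^{\cy}(\mathcal C)$ via $\Delta_U$ --- and then run an inductive extension argument: the coherence relations prescribe the weight family on $\partial I^U$, the prescriptions agree on facet intersections because $F_pF_q=F_{pq}$ and $\bar F_p\bar F_q=\bar F_{pq}$, and convexity of the weight simplex supplies an extension over $I^U$. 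This is a softer, obstruction-theoretic version of the paper's proof; the paper's explicit multilinear formula is precisely one admissible choice of your extension (so your remark that the extension-over-the-cube step ``cannot be avoided'' is not quite right --- the fixed convention of putting the $D$'s innermost resolves the indexing ambiguity and gives a uniform product formula). What your approach buys is conceptual clarity: it makes visible that the only input is contractibility of the space of weighted diagonals. What the paper's approach buys is an explicit, natural formula, which matters downstream when the homotopies must be shown compatible with the multiplicative structure (cf.\ the $E_\infty$ refinement announced in the introduction). The bookkeeping you flag --- that precomposition with $F_V$ weights the fine blocks while postcomposition with $\bar F_V$ weights the coarse ones, and that the translation uses $\Delta_{U\setminus p}\circ\Delta_p=\Delta_U$ together with cosimpliciality of the weighted diagonals --- is exactly the content one must check, and your pairwise-facet consistency verification together with the reduction of general $V\subseteq U$ to iterated singleton facets is complete.
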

\begin{proof}
We must produce a higher homotopy
$$
h^U\co B^{\cy}(\mathcal C)\times I^U\to B^{\cy}(\mathcal C)
$$
for each finite subset $U\subseteq \mathcal P$, such that
$h^{\emptyset}$ is the identity and the relations in
(\ref{Ncoherenceeq}) are satisfied, that is,
\[
\partial_Vh^U=h^{U-V}\circ (F_V\times I^{U-V}),\qquad 
\partial^Vh^U=\bar F_V\circ h^{U-V},
\]
whenever $V\subseteq U$. Let $\sqcup_U\co \Delta\to \Delta$ be the composition 
of the concatenation functors $\sqcup_p$ for $p\in U$ and  
consider the homotopies
$$
h^U\co \Delta[k]\times I^U\to \Delta[\sqcup_U[k]]
$$
defined by
$$
h^U(v,(t_p))=\prod_{p\in U}((1-t_p)D_pv+t_p\bar D_pv).
$$
Here we use the notation
$$
D_pv\cdot D_{q}v=D_{pq}v,\quad \bar D_pv\cdot\bar D_{q}v=
\bar D_{pq}v
$$
and make the convention that when both $D_p$ and $\bar D_q$ occur in a
product, then we apply $D_p$ first, that is,
$$
D_pv\cdot \bar D_{q}v=(\bar D_{q}\circ D_p)v,\quad \bar
D_{p}v\cdot D_{q}v=(\bar D_{p}\circ D_{q})v.
$$
Thus, for example,
\begin{align*}
h^{\{p,q\}}(v,(t_p,t_{q}))=
&(1-t_p)(1-t_{q})D_{pq}v+(1-t_p)t_{q}(\bar
D_{q}\circ D_{p})v\\
&+t_p(1-t_{q})(\bar D_p\circ
D_{q})v+t_pt_{q}\bar D_{pq}v.
\end{align*}
Then, with notation as in (\ref{Ncoherenceeq}), we have the relations
\begin{equation}\label{Dhomotopieseq}
\partial_Vh^U=h^{U-V}\circ(D_V\times
I^{U-V}),\quad\partial^Vh^U=\bar D_V\circ h^{U-V}.
\end{equation}
If we view $I^U$ as a constant cosimplicial space, then
$h^U$ defines a map of cosimplicial spaces, hence induces a natural
map
$$
h^U\co|\sd_UX_{\bullet}|\times I^U\to |X_{\bullet}|
$$
for any simplicial space $X_{\bullet}$. Here $\sd_U$
denotes the composition of the functors $\sd_p$ for $p\in U$. Applying
this to $B^{\cy}_{\bullet}(\mathcal C)$ and writing $C_U$ for the
cyclic group of order the product of the elements in $U$,
the requested homotopies are defined by
$$
h^U\co |B^{\cy}_{\bullet}(\mathcal C)|\times I^U
\xr{\Delta_U}
|\sd_UB^{\cy}_{\bullet}(\mathcal C)^{C_U}|\times I^U\to
|\sd_UB^{\cy}_{\bullet}(\mathcal C)|\times
I^U\xr{h^U}
|B^{\cy}_{\bullet}(\mathcal C)|.
$$
For $U=\emptyset$, we define $C_{\emptyset}$ to be the trivial group and
$h^{\emptyset}$ to be the identity on $B^{\cy}(\mathcal C)$.
It follows from (\ref{Dhomotopieseq}) that these homotopies satisfy
the required coherence relations.
\end{proof}

\begin{corollary}\label{coherentcorollary}
The projection $B^{\cy}(\mathcal C)\to B(\mathcal C)$ is compatible with the $N$-actions up to coherent homotopy.  
\end{corollary}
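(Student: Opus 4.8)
The plan is to deduce the corollary from Lemma~\ref{FbarFlemma} together with the elementary observation that the projection $\pi\co B^{\cy}(\mathcal C)\to B(\mathcal C)$ is \emph{strictly} equivariant when $B^{\cy}(\mathcal C)$ carries the $\bar F_r$-action and $B(\mathcal C)$ the trivial $N$-action, i.e.\ that $\pi\circ\bar F_r=\pi$ for all $r$. Granting this, the rest is a formal diagram chase. Let $h^U\co B^{\cy}(\mathcal C)\times I^U\to B^{\cy}(\mathcal C)$ be the family of coherent homotopies produced in the proof of Lemma~\ref{FbarFlemma}; thus $h^{\emptyset}=\id$ and, with notation as in (\ref{Ncoherenceeq}), $\partial_Vh^U=h^{U-V}\circ(F_V\times I^{U-V})$ and $\partial^Vh^U=\bar F_V\circ h^{U-V}$ for $V\subseteq U$. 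Set
\[
g^U=\pi\circ h^U\co B^{\cy}(\mathcal C)\times I^U\to B(\mathcal C).
\]
Since restriction to a face commutes with post-composition with $\pi$, we get $g^{\emptyset}=\pi$ and $\partial_Vg^U=\pi\circ h^{U-V}\circ(F_V\times I^{U-V})=g^{U-V}\circ(F_V\times I^{U-V})$, while, using that $\bar F_V=\prod_{p\in V}\bar F_p$ also satisfies $\pi\circ\bar F_V=\pi$,
\[
\partial^Vg^U=\pi\circ\bar F_V\circ h^{U-V}=\pi\circ h^{U-V}=g^{U-V}.
\]
As the $N$-action on $B(\mathcal C)$ is trivial, these are precisely the coherence relations (\ref{Ncoherenceeq}) for the map $\pi$, so $\pi$ is compatible with the $N$-actions up to coherent homotopy.

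It remains to check $\pi\circ\bar F_r=\pi$. By construction $\bar F_r$ is the realization of the simplicial self-map $\bar D_r^*\circ\Delta_r$ of $B^{\cy}_{\bullet}(\mathcal C)$, where $\Delta_r\co B^{\cy}_{\bullet}(\mathcal C)\xrightarrow{\cong}\sd_rB^{\cy}_{\bullet}(\mathcal C)^{C_r}$ is the diagonal isomorphism (followed by the inclusion into $\sd_rB^{\cy}_{\bullet}(\mathcal C)$) and $\bar D_{r,k}^*=d_0^{(r-1)(k+1)}\co \sd_rB^{\cy}_k(\mathcal C)=B^{\cy}_{r(k+1)-1}(\mathcal C)\to B^{\cy}_k(\mathcal C)$ restricts a simplex to its last block of $k+1$ vertices. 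In simplicial degree $k$ the diagonal sends a tuple $(g_0,g_1,\dots,g_k)$, where $g_0\in\mathcal C(c_0,c_k)$ is the ``wrap-around'' coordinate forgotten by $\pi$, to the $r$-fold concatenated necklace $(g_0,g_1,\dots,g_k,g_0,\dots,g_k)$ of $r(k+1)$ morphisms; applying $d_0^{(r-1)(k+1)}$ collapses the first $r-1$ copies of the block into the zeroth coordinate and leaves the last $k$ entries untouched, yielding $(\omega^{r-1}g_0,\,g_1,\,g_2,\dots,g_k)$ with $\omega=g_0g_1\cdots g_k$ the composite around the cycle. Applying $\pi_k$, which forgets the zeroth coordinate, gives $(g_1,\dots,g_k)=\pi_k(g_0,\dots,g_k)$, so $\pi_k\circ\bar F_{r,k}=\pi_k$ at every simplicial level and hence $\pi\circ\bar F_r=\pi$ after realization.

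The only genuine content is the identity $\pi\circ\bar F_r=\pi$, and it is exactly the reason the auxiliary operators $\bar F_r$ were introduced in place of the $F_r$ of Section~\ref{cyclicbarsection}: the map $D_r$ used to define $F_r$ spreads the coordinates of a $k$-simplex barycentrically across all $r$ blocks of the subdivided simplex instead of restricting to the last one, so that $\pi\circ F_r$ agrees with $\pi$ only up to homotopy --- which is precisely what Lemma~\ref{FbarFlemma} repairs. I do not foresee any further obstacle; beyond the simplicial identity above the argument merely bookkeeps the relations (\ref{Ncoherenceeq}) against the homotopies already constructed in Lemma~\ref{FbarFlemma}, with no new homotopies to produce.
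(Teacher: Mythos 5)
Your proof is correct and follows exactly the paper's route: observe that the projection is \emph{strictly} equivariant for the $\bar F_r$-action (which you verify by the levelwise computation $\pi_k\circ\bar D^*_{r,k}\circ\Delta_{r,k}=\pi_k$, since the collapsed composite lands in the forgotten zeroth coordinate) and then compose with the coherent homotopies of Lemma~\ref{FbarFlemma}. The paper's proof is a two-line version of the same argument; your explicit verification of $\pi\circ\bar F_r=\pi$ and of the relations for $g^U=\pi\circ h^U$ just fills in what the paper calls ``immediate from the definition.''
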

\begin{proof}
It follows immediately from the definition that if $B^{\cy}(\mathcal C)$ is equipped with the $N$-action induced by the $\bar F_r$ operators, then the projection is $N$-equivariant when we give $B(\mathcal C)$ the trivial action. The result therefore follows from Lemma \ref{FbarFlemma}.
\end{proof}

\medskip
\noindent\textit{Proof of Theorem \ref{Bcytheorem}.}
Using Lemma \ref{Bcylemma} and Proposition \ref{Ncoherenceproposition} it suffices to show that the map in (\ref{Bcymap}) is compatible with the $N$-actions up to coherent homotopy when we give $B(\mathcal C)$ the trivial action. It is clear from the definition that the first map in  
(\ref{Bcymap}) is $N$-equivariant since the base point in $E\mathbb T$ is fixed by the 
$N$-action. The result therefore follows from Corollary \ref{coherentcorollary}. \qed

\end{document}